\numberwithin{equation}{section}
\newtheorem{thm}{Theorem}[section]
\newtheorem{lem}[thm]{Lemma}
\newtheorem{prop}[thm]{Proposition}
\newtheorem{defn}[thm]{Definition}
\theoremstyle{remark}
\newtheorem{rem}[thm]{Remark}
\DeclareMathOperator{\loc}{loc}
\newcommand{\D}{\Delta}
\newcommand{\vp}{\varphi}
\newcommand{\R}{\mathbb{R}}
\newcommand{\ve}{\varepsilon}
\newcommand{\B}{\mathbb{B}}
\newcommand{\K}{\mathcal{K}}
\newcommand{\lp}{\left(}
\newcommand{\rp}{\right)}
\newcommand{\vol}{\mathrm{vol}}
\title{Concentration phenomena for the fractional $Q$-curvature equation in dimension 3  and fractional Poisson formulas}
\author{Azahara  DelaTorre  \\ {\small  Albert-Ludwigs-Universit\"at Freiburg}\and Maria  del  Mar  Gonzalez\\ \small{Universidad Aut\'onoma de Madrid}\and  Ali  Hyder\\ {\small University of British Columbia Vancouver} \and Luca  Martinazzi \\  {\small Universit\`a di Padova}}
\begin{document}

\maketitle

\begin{abstract}
We study the compactness properties of metrics of prescribed fractional $Q$-curvature of order $3$ in $\R^3$. We will use an approach inspired from conformal geometry, seeing a metric on a subset of $\R^3$ as the restriction of a metric on $\R^4_+$ with vanishing fourth-order $Q$-curvature. We will show that a sequence of such metrics with uniformly bounded fractional $Q$-curvature can blow up on a large set (roughly, the zero set of the trace of a nonpositive biharmonic function $\Phi$ in $\R^4_+$), in analogy with a $4$-dimensional result of Adimurthi-Robert-Struwe, and construct examples of such behaviour. In doing so, we produce general Poisson-type representation formulas (also for higher dimension), which are of independent interest.
\end{abstract}

\section{Introduction}

Consider a Riemannian manifold $(M,g)$. A classical problem in differential geometry is to conformally transform the metric $g$ in such a way that the scalar curvature of the new metric coincides with a given function $K$. When $(M,g)$ is the round sphere, this corresponds to the intensely studied Nirenberg problem, or when $K$ is chosen to be constant we have a so-called uniformization problem.

Similar problems arise and have been (and are being) studied with the Riemannian scalar curvature replaced by other notions of curvature, among which, the $Q$-curvature. For instance, the uniformization problem for closed manifolds of even dimension $2m\ge 4$ has been addressed in \cite{DMalchiodi,Ndi}, under the assumption that the total $Q$-curvature $\int_M Q_g \,\mathrm{dvol_g}$ is not a multiple of the constant $\Lambda_1:=(2m-1)!\,\mathrm{vol}(\mathbb S^{2m})$, which is the total $Q$-curvature of $\mathbb S^{2m}$. Removing this assumption, the problem is still open, to the best of our knowledge.
A fundamental tool in approaching this, and other prescribed curvature problems, is the so-called blow-up (or concentration) analysis of a sequence of metrics with prescribed curvature. For instance, in the seminal paper \cite{Brezis-Merle} Br\'ezis and Merle studied the case of the Gaussian curvature in dimension $2$:

\begin{thm}[\cite{Brezis-Merle}]\label{thbm}
Given an open subset $\Omega$ of $\R^2$, assume that $(u_k)\subset L^1_{\loc}(\Omega)$ is a sequence of weak solutions to
\begin{equation}\label{eqBM}
-\Delta u_k=K_ke^{2u_k} \quad \text{in }\Omega
\end{equation}
with $K_k\ge 0$ and such that $\|K_k\|_{L^\infty(\Omega)}\le \bar \kappa$ and $\|e^{2u_k}\|_{L^1(\Omega)}\le \bar A,$ for $\bar{\kappa},\bar A $ positive constants.
Then, up to subsequences, either
\begin{enumerate}
\item $u_k$ is bounded in $L^{\infty}_{\loc}(\Omega)$, or\par
\item there is a finite (possibly empty) set $B=\{x_1,\ldots,x_N\}\subset\Omega$  (the blow-up set) such that
$u_k(x)\to -\infty$ locally uniformly in $\Omega\setminus B$, and
\begin{equation}\label{conc2d}
K_k e^{2u_k}\stackrel{*}{\rightharpoonup} \sum_{i=1}^N\alpha_i\delta_{x_i} \quad\text{for some numbers }\alpha_i\ge 2\pi,
\end{equation}
where $\stackrel{*}{\rightharpoonup}$ denotes the weak-* convergence in the sense of Radon measures.
\end{enumerate}
\end{thm}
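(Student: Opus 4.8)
The plan is to run the classical concentration--compactness scheme for Liouville-type equations, whose engine is a Moser--Trudinger-type estimate for the Newtonian potential with $L^{1}$ right-hand side; we may assume $\Omega$ connected. First I would extract the limit measure: since $\mu_{k}:=K_{k}e^{2u_{k}}\,dx$ has total mass $\|K_{k}e^{2u_{k}}\|_{L^{1}(\Omega)}\le\bar\kappa\bar A$, after passing to a subsequence $\mu_{k}\stackrel{*}{\rightharpoonup}\mu$ weakly-$*$ as Radon measures, with $\mu\ge0$ and $\mu(\Omega)\le\bar\kappa\bar A$. Then I would record the key estimate of Br\'ezis--Merle: if $-\Delta w=f$ in a bounded domain $D$ with $w=0$ on $\partial D$, then for every $\delta\in(0,4\pi)$,
\[
\int_{D}\exp\!\left(\frac{(4\pi-\delta)|w(x)|}{\|f\|_{L^{1}(D)}}\right)dx\le\frac{4\pi^{2}}{\delta}(\mathrm{diam}\,D)^{2},
\]
which follows by inserting the Green representation $w(x)=\int_{D}G_{D}(x,y)f(y)\,dy$, the bound $0\le G_{D}(x,y)\le\frac1{2\pi}\log\frac{C_{D}}{|x-y|}$ (reduction to a ball by symmetrization), Jensen's inequality with the probability measure $|f|\,dy/\|f\|_{L^{1}(D)}$, and Fubini. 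Its consequence is the threshold $2\pi$: if $\|f\|_{L^{1}(D)}<2\pi$, then $e^{2|w|}\in L^{q}(D)$ for some $q>1$, with a bound depending only on $\|f\|_{L^{1}(D)}$ and $\mathrm{diam}\,D$. This dictates the blow-up set $S:=\{x_{0}\in\Omega:\mu(\overline{B_{r}(x_{0})})\ge2\pi\text{ for all }r>0\}=\{x_{0}\in\Omega:\mu(\{x_{0}\})\ge2\pi\}$, which is finite with $\#S\le\bar\kappa\bar A/(2\pi)$; write $S=\{x_{1},\dots,x_{N}\}$.

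The heart of the proof is an $\varepsilon$-regularity statement: $u_{k}^{+}$ is bounded in $L^{\infty}_{\loc}(\Omega\setminus S)$. Fixing $x_{0}\notin S$ and $r>0$ with $\overline{B_{2r}(x_{0})}\subset\Omega$ and $\mu(\overline{B_{2r}(x_{0})})<2\pi$, the upper semicontinuity of weak-$*$ convergence on compacta yields $\|K_{k}e^{2u_{k}}\|_{L^{1}(B_{2r}(x_{0}))}<2\pi$ for $k$ large. On $B:=B_{2r}(x_{0})$ I split $u_{k}=v_{k}+h_{k}$, where $-\Delta v_{k}=K_{k}e^{2u_{k}}$ in $B$, $v_{k}=0$ on $\partial B$, and $h_{k}$ is harmonic; since $K_{k}e^{2u_{k}}\ge0$ we have $v_{k}\ge0$, so $e^{2h_{k}}\le e^{2u_{k}}$ and $\int_{B}e^{2h_{k}}\le\bar A$. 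Subharmonicity of $e^{2h_{k}}$ and the sub-mean-value inequality then give $h_{k}\le C(r,\bar A)$ on $B_{r}(x_{0})$, while the Br\'ezis--Merle estimate bounds $\|e^{2v_{k}}\|_{L^{q}(B)}$ uniformly for some fixed $q>1$; multiplying, $K_{k}e^{2u_{k}}$ is bounded in $L^{q}(B_{r}(x_{0}))$. Running the same splitting on $B_{r}(x_{0})$ with this $L^{q}$ right-hand side, Calder\'on--Zygmund theory bounds the potential part in $W^{2,q}\hookrightarrow C^{0}$ (valid since $q>1$ and $n=2$) and subharmonicity again bounds the harmonic part above, so $u_{k}\le C$ on $B_{r/2}(x_{0})$; a covering argument over compact subsets of $\Omega\setminus S$ finishes the claim. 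In particular $K_{k}e^{2u_{k}}$ is bounded in $L^{\infty}_{\loc}(\Omega\setminus S)$, so $\mu$ restricted to $\Omega\setminus S$ is absolutely continuous, while $\mu(\{x_{i}\})\ge2\pi$ for each $i$.

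Finally I would produce the dichotomy by a Harnack alternative. On $\Omega\setminus S$, write $u_{k}=\phi_{k}+\psi_{k}$ locally, with $-\Delta\phi_{k}=K_{k}e^{2u_{k}}$ (so $\phi_{k}$ is bounded in $C^{0}_{\loc}$ by the previous step and elliptic regularity) and $\psi_{k}$ harmonic with $\psi_{k}^{+}$ locally bounded. Applying Harnack's inequality to the nonnegative harmonic functions $C_{K}-\psi_{k}$ on connected compacta and exhausting $\Omega\setminus S$, we get either (a) $u_{k}$ bounded in $L^{\infty}_{\loc}(\Omega\setminus S)$, or (b) $u_{k}\to-\infty$ locally uniformly in $\Omega\setminus S$. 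In case (b), set $B:=S$; then $K_{k}e^{2u_{k}}\le\bar\kappa e^{2u_{k}}\to0$ locally uniformly off $B$, so $\mu$ is supported on $B$, i.e.\ $K_{k}e^{2u_{k}}\stackrel{*}{\rightharpoonup}\sum_{i}\alpha_{i}\delta_{x_{i}}$ with $\alpha_{i}=\mu(\{x_{i}\})\ge2\pi$ --- alternative (2). In case (a), alternative (1) follows once we show $S=\emptyset$: the $L^{\infty}_{\loc}(\Omega\setminus S)$ bound together with an $L^{1}$ bound for the Green potential of $K_{k}e^{2u_{k}}$ near each $x_{i}$ makes $u_{k}$ bounded in $L^{1}_{\loc}(\Omega)$, so along a subsequence $-\Delta u=\mu$ in $\mathcal{D}'(\Omega)$ with $u=\lim_{k}u_{k}$ a.e.\ in $\Omega\setminus S$ and $\int_{\Omega\setminus S}e^{2u}\le\bar A$; but near $x_{i}$ one has $u=\frac{\mu(\{x_{i}\})}{2\pi}\log\frac1{|x-x_{i}|}+(\text{a potential of a nonnegative measure, hence bounded below})$, whence $e^{2u}\gtrsim|x-x_{i}|^{-\mu(\{x_{i}\})/\pi}$ with exponent $\le-2$, contradicting $e^{2u}\in L^{1}$. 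Hence $S=\emptyset$ and we are in alternative (1).

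The main obstacle is the sharp threshold: the entire argument, and in particular the quantization $\alpha_{i}\ge2\pi$, rests on the Br\'ezis--Merle/Moser--Trudinger estimate for $L^{1}$ data and the precise constant it produces, together with the use of the distributional limit equation to exclude the ``bounded-but-singular'' scenario of case (a). By comparison, the $v+h$ splitting, the subharmonicity of $e^{2h}$, the Calder\'on--Zygmund bootstrap and the Harnack alternative are routine.
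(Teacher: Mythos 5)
Your proof is correct and follows the classical Br\'ezis--Merle scheme: the exponential-integrability estimate for the Green potential of $L^1$ data via Jensen's inequality, the potential-plus-harmonic splitting with subharmonicity of $e^{2h_k}$ to get $\varepsilon$-regularity below the threshold $2\pi$, the Harnack alternative, and the exclusion of a nonempty blow-up set in the bounded case via the logarithmic singularity of the limiting potential. The paper itself does not prove this statement — it is quoted from \cite{Brezis-Merle} as background — but your argument is essentially the original one, and the same ingredients (the Jensen-based estimate of Lemma \ref{lemmaBM}, the $U_k=V_k+H_k$ splitting, and Pizzetti's formula playing the role of Harnack) are precisely what the authors adapt to the fractional setting in Section \ref{section:proof}.
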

In \eqref{eqBM} the function $K_k$ is the Gaussian curvature of the metric $e^{2u_k}|dx|^2$, having area $\|e^{2u_k}dx\|_{L^1(\Omega)}\le \bar A$. The constant $2\pi$ on the right-hand side of \eqref{conc2d} corresponds to the half of the total Gaussian curvature of $\mathbb S^2$, a feature that will appear again.

That case \eqref{conc2d} actually occurs can be easily seen by considering the function
$$u(x)=\log\frac{2}{1+|x|^2},$$
and then defining $u_k(x)=u(kx)+\log k$. Each $u_k$ solves \eqref{eqBM} in $\Omega=\R^2$ with $K_k\equiv 1$ and $\|e^{2u_k}\|_{L^1(\R^2)}=4\pi$, so that $e^{2u_k} \stackrel{*}{\rightharpoonup} 4\pi \delta_0$.
\medskip

In higher even dimension $2m$ one can replace equation \eqref{eqBM} with
\begin{equation}\label{eq2m}
(-\Delta)^m u_k = Q_k e^{2mu_k}\quad \text{in }\Omega\subset \R^{2m},
\end{equation}
having the geometric interpretation that $Q_k$ is the $Q$-\emph{curvature} of the conformal metric $e^{2u_k}|dx|^2$ on $\Omega$. In spite of similar scaling properties, as discovered by Adimurthi, Robert and Struwe \cite{ARS}, equation \eqref{eq2m} exhibits a richer blow-up behaviour than \eqref{eqBM} when $2m=4$. In particular blow-up is possible not only on isolated points, but also on hyperplanes or, in general, on zero sets of non-positive biharmonic functions. This was later generalized to arbitrary even dimension $2m\ge 4$ in \cite{MarCC}. For a finite set $S_1\subset \Omega$ define
\begin{equation}\label{defK}
\mathcal{K}(\Omega, S_1):=\{\varphi\in C^\infty(\Omega \setminus S_1):\varphi\le 0,\,\varphi\not\equiv 0,\, \Delta^m \varphi\equiv 0\},
\end{equation}
and for a function $\varphi \in \mathcal{K}(\Omega,S_1)$ set
\begin{equation}\label{defS0}
S_\varphi:=\{x\in\Omega\setminus S_1: \varphi(x)=0\}.
\end{equation}

\begin{thm}[\cite{ARS,MarCC}]\label{trmARSM}
Let $(u_k)$ be a sequence of solutions to \eqref{eq2m} for some $m\ge 1$ under the bounds
\begin{equation}\label{bounds2m}
\|Q_k\|_{L^\infty(\Omega)}\le C,\quad \int_{\Omega}e^{2mu_k}dx\le C.
\end{equation}
Then the set
$$S_1:=\left\{x\in \Omega: \lim_{r\downarrow 0}\limsup_{k\to\infty}\int_{B_r(x)}|Q_k|e^{2mu_k}\,dy\ge \frac{\Lambda_1}{2} \right\},\quad\text{where}\quad \Lambda_1:=(2m-1)!\,\vol(\mathbb S^{2m}),$$
is finite (possibly empty) and, up to a subsequence, either
\begin{itemize}
\item[$(i)$] $(u_k)$ is bounded in $C^{2m-1,\alpha}_{\loc}(\Omega\setminus S_1)$, or
\item[$(ii)$] there exists a function $\varphi\in\mathcal{K}(\Omega, S_1)$ and a sequence $\beta_k\to+\infty$ as $k\to+\infty$ such that
$$\frac{u_k}{\beta_k}\to \varphi \text{ locally uniformly in }\Omega\setminus S_1.$$
In particular, $u_k\to -\infty$ locally uniformly in $\Omega\setminus (S_\varphi\cup S_1)$.
\end{itemize}
\end{thm}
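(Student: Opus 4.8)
The plan is to reduce everything to a local analysis on balls $B\subset\subset\Omega\setminus S_1$, using on each such ball the Brezis--Merle/Adams decomposition of $u_k$ into a Riesz-type potential of the right-hand side plus a polyharmonic remainder. I first dispose of the finiteness of $S_1$. Since $\|Q_k\|_{L^\infty(\Omega)}\le C$ and $\int_\Omega e^{2mu_k}\,dx\le C$, the Radon measures $\mu_k:=|Q_k|e^{2mu_k}\,dx$ have mass $\le C^2$, so after passing to a subsequence $\mu_k\rightharpoonup\mu$ weakly-$*$. The map $r\mapsto\limsup_k\mu_k(B_r(x))$ is non-decreasing, so every $x\in S_1$ satisfies $\limsup_k\mu_k(B_r(x))\ge\Lambda_1/2$ for \emph{all} $r>0$; separating finitely many points of $S_1$ by disjoint balls and passing to a further subsequence on which each $\mu_k(B_r(x_i))$ converges, we see that any finite subset of $S_1$ has cardinality $\le 2C^2/\Lambda_1$, hence $\#S_1\le 2C^2/\Lambda_1<\infty$. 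The same reasoning shows $\mu$ has no atom of mass $\ge\Lambda_1/2$ in $\Omega\setminus S_1$.

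Now fix $B=B_{2\rho}(x_0)\subset\subset\Omega\setminus S_1$, let $v_k$ solve $(-\Delta)^mv_k=Q_ke^{2mu_k}$ in $B$ with homogeneous Navier boundary data, and put $h_k:=u_k-v_k$, so that $\Delta^mh_k\equiv0$ in $B$. The crucial point is a uniform exponential estimate for $v_k$: for every $x\in\overline{B_\rho(x_0)}$ there is $\sigma_x>0$ with $\mu(\overline{B_{\sigma_x}(x)})<\Lambda_1/2$, hence $\|Q_ke^{2mu_k}\|_{L^1(B_{\sigma_x}(x))}\le\Lambda_1/2-\delta$ for $k$ large; covering $\overline{B_\rho(x_0)}$ by finitely many such balls and splitting $v_k$ into the potential of $Q_ke^{2mu_k}$ restricted to one such ball — to which the sharp exponential (Adams--Brezis--Merle-type) estimate for $(-\Delta)^m$ applies — and the complementary ``far'' potential, which is uniformly bounded on a slightly smaller ball, one obtains that $e^{2mv_k}$ is bounded in $L^p_{\loc}(B_\rho(x_0))$ for some $p>1$, and in particular $\|v_k\|_{L^1_{\loc}}$ is bounded. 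It is precisely here that the threshold $\Lambda_1/2$ (rather than $\Lambda_1$) is forced: $(-\Delta)^m\log\frac1{|x|}=\tfrac{\Lambda_1}{2}\,\delta_0$, so a log-potential of mass $\Lambda_1/2$ is exactly the borderline for $e^{2m(\cdot)}$ to fail local integrability.

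The dichotomy is then read off the polyharmonic functions $h_k$, for which we use the standard estimate $\|h\|_{C^j(B_r)}\le C(j,m,r)\,\|h\|_{L^1(B_{2r})}$ valid whenever $\Delta^mh=0$ in $B_{2r}$. If $\|h_k\|_{L^1_{\loc}}$ stays bounded along the full sequence, then so does $\|u_k\|_{L^1_{\loc}}$; hence $e^{2mu_k}\le Ce^{2m|v_k|}e^{2m|h_k|}$ is bounded in $L^p_{\loc}$, and elliptic regularity for $(-\Delta)^m$ (Sobolev embedding followed by a bootstrap up to $e^{2mu_k}\in L^\infty_{\loc}$) makes $(u_k)$ bounded in $C^{2m-1,\alpha}_{\loc}(\Omega\setminus S_1)$ — this is case $(i)$. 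Otherwise, along a subsequence $\beta_k:=1+\|h_k\|_{L^1(K)}\to+\infty$ for a fixed compact $K$; then $h_k/\beta_k$ is bounded in $L^1_{\loc}$, polyharmonic, hence (subsequence) converges in $C^j_{\loc}$ to some $\varphi$ with $\Delta^m\varphi=0$ and $\varphi\not\equiv0$, while $v_k/\beta_k\to0$ in $L^p_{\loc}$ by the previous step; an exhaustion-and-connectedness argument matches the local normalizations into a single $\beta_k$ and a single $\varphi\in C^\infty(\Omega\setminus S_1)$, and one checks (again via the exponential bound on $v_k$ and the equation for $u_k/\beta_k$) that $u_k/\beta_k\to\varphi$ locally uniformly in $\Omega\setminus S_1$. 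Finally $\varphi\le0$: if $\varphi(x_1)>0$, then $h_k\ge c\,\beta_k\to+\infty$ uniformly on a ball $\omega\ni x_1$, and since $\|v_k\|_{L^1(\omega)}$ is bounded, Jensen's inequality gives $\int_\omega e^{2mv_k}\ge|\omega|\,e^{-C'}>0$, so $\int_\omega e^{2mu_k}\ge e^{2mc\beta_k}\int_\omega e^{2mv_k}\to+\infty$, contradicting $\int_\Omega e^{2mu_k}\le C$. Thus $\varphi\in\mathcal K(\Omega,S_1)$, and on $\Omega\setminus(S_\varphi\cup S_1)$ we have $\varphi<0$, whence $u_k=\beta_k(u_k/\beta_k)\to-\infty$ locally uniformly, which is case $(ii)$.

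The step I expect to be the main obstacle is the sharp exponential estimate with the exact constant $\Lambda_1/2$: it requires the Adams--Brezis--Merle inequality for the polyharmonic operator on a ball, the truncation of $\mu_k$ at non-concentration points, and control of the far field — and, closely related, the global patching that produces one scaling sequence $\beta_k$ and one polyharmonic limit $\varphi$ out of the local decompositions. By contrast, the non-positivity of $\varphi$ is a clean consequence of the area bound $\int_\Omega e^{2mu_k}\le C$ through the Jensen estimate above, and the regularity alternative $(i)$ is a routine bootstrap once $e^{2mv_k}\in L^p_{\loc}$, $p>1$, is in hand.
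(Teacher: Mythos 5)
Your proposal is correct and follows essentially the same route as the proofs in the cited references \cite{ARS,MarCC} (the paper itself only quotes this theorem without proof, but its own Theorem \ref{thm1} is proved by the same template): decomposition of $u_k$ into a Green/log-potential part $v_k$ plus a polyharmonic part $h_k$, the Brezis--Merle-type exponential estimate below the threshold $\Lambda_1/2$ coming from $(-\Delta)^m\log\frac{1}{|x|}=\frac{\Lambda_1}{2}\delta_0$, a dichotomy on $\|h_k\|_{L^1_{\loc}}$ with Pizzetti-type estimates to propagate bounds and patch the local normalizations, and non-positivity of $\varphi$ from the volume bound via Jensen. The only point you gloss over is upgrading $v_k/\beta_k\to0$ from $L^p_{\loc}$ to locally uniform convergence near $S_\varphi$, but this is a technical detail handled in \cite{ARS,MarCC} and does not affect the soundness of the approach.
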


In fact, in case (\emph{ii}) of Theorem \ref{trmARSM}, one can prescribe the blow-up set $S_\varphi$, in the sense that given any $\varphi\in \mathcal{K}(\Omega,\emptyset)$, one can construct a sequence $(u_k)$ solving \eqref{eq2m} and \eqref{bounds2m} with $u_k\to +\infty$ on $S_\varphi$, as shown in \cite{HIM}. Moreover in the radial case of dimension $6$, it was also shown in \cite{HM} that the blow-up set $S_1=\{0\}$ and $S_\varphi=\{x:|x|=1\}$ can coexist. See also \cite{DR, Mal, Ndi} for the case of a closed manifold of even dimension $4$ and higher.

\medskip

The problem of prescribing $Q$-curvature is not confined to even dimensions, but a crucial difficulty that arises when studying a problem as \eqref{eq2m} in any odd dimension $n$ is that one has to deal with the fractional Laplacian operator $(-\Delta)^\frac{n}{2}$, which is non-local. This was done in dimension one in the cases of $\mathbb S^1$ and of the real line by Da Lio, Martinazzi and Rivi\`ere \cite{DM,DMR}. In particular, the following compactness result is proven:

\begin{thm}[\cite{DM}]\label{trmDM} Let $(u_k)\subset L_\frac12(\R)$ be a sequence of solutions to
\begin{equation}\label{equk}
(-\Delta)^\frac{1}{2}u_k=K_ke^{u_k}\quad \text{in } \R{}
\end{equation}
and assume that
\begin{equation}\label{boundKL}
\|K_k\|_{L^\infty}\le C,\quad \int_{\R}e^{u_k}dx \le C.
\end{equation}
Up to a subsequence assume that $K_k\stackrel{*}{\rightharpoonup}  K_{\infty}$ in $L^{\infty}(\R)$.
Then there exists a finite (possibly empty) set $B:=\{x_1,\ldots, x_N\}\subset \R$ such that, up to extracting a further subsequence, one of the following alternatives holds:
\begin{itemize}
\item[(i)] $u_k\to u_\infty$ in $W^{1,p}_{\loc}(\R\setminus B)$ for $p<\infty$, where
 \begin{equation} \label{quantDM}
K_ke^{u_k} \stackrel{*}\rightharpoonup K_{\infty} e^{u_\infty}+ \sum_{i=1}^{N}\pi  \delta_{x_i}.
\end{equation}
\item[(ii)] $u_k\to -\infty$ locally uniformly in $\R\setminus B$ and
 \begin{equation} \label{quantDM2}
K_ke^{u_k} \stackrel{*}\rightharpoonup \sum_{j=1}^N \alpha_j \delta_{x_j},\quad \text{for some }\alpha_1,\dots,\alpha_N\ge \pi.
\end{equation}
\end{itemize}
\end{thm}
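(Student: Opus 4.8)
The plan is to run the Br\'ezis--Merle concentration--compactness scheme of Theorem~\ref{thbm} for the nonlocal operator $(-\D)^{1/2}$, using the logarithmic fundamental solution $-\tfrac1\pi\log|x|$ of $(-\D)^{1/2}$ on $\R$ in place of the Newtonian potential, and recovering the local tools it lacks (mean value inequality, Harnack inequality, Pohozaev identity) by passing to the harmonic (Caffarelli--Silvestre) extension to $\R^2_+$. First, by \eqref{boundKL} the measures $\mu_k:=K_ke^{u_k}\,dx$ have uniformly bounded total variation, so after extracting a subsequence $\mu_k\stackrel{*}{\rightharpoonup}\mu$ and $|K_k|e^{u_k}\,dx\stackrel{*}{\rightharpoonup}\nu$ for nonnegative Radon measures $\mu$, $\nu$ with $\nu(\R)\le C$. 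Fixing $\ve_0\in(0,\pi)$, set $B:=\{x\in\R:\ \nu(\{x\})\ge\ve_0\}$; since $\nu$ has finite mass, $B$ is finite, and it will turn out that $B$ is exactly the set of concentration points of $(\mu_k)$, each carrying mass at least $\pi$.

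The core of the argument is an $\ve$-regularity estimate: if $x_0\notin B$ and $\rho>0$ is small enough that $\nu(\overline{B_{2\rho}(x_0)})<\ve_0$, then $e^{pu_k}$ is bounded in $L^1(B_\rho(x_0))$ for some $p>1$. To prove it, split $u_k=v_k+w_k$ on $B_{2\rho}(x_0)$, where
$$v_k(x):=\frac1\pi\int_{B_{2\rho}(x_0)}\log\frac{2\rho}{|x-y|}\,K_k(y)e^{u_k(y)}\,dy$$
solves $(-\D)^{1/2}v_k=K_ke^{u_k}$ in $B_{2\rho}(x_0)$ (splitting the measure into its positive and negative parts and estimating each when $K_k$ changes sign). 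The Adams--Moser--Trudinger inequality for $(-\D)^{1/2}$ on $\R$ then bounds $\|e^{pv_k}\|_{L^1(B_\rho(x_0))}$ uniformly for every $p$ with $p\,\ve_0<2\pi$, in particular for some $p>1$. The remainder $w_k$ is $\tfrac12$-harmonic in $B_{2\rho}(x_0)$; passing to its harmonic extension to the half-ball in $\R^2_+$ and combining the mean value property with the control of $\|u_k\|_{L_{1/2}(\R)}$ afforded by $\int_\R e^{u_k}\,dx\le C$ and the equation, one gets $\sup_{B_\rho(x_0)}w_k\le C$. Multiplying the two bounds yields $\|e^{pu_k}\|_{L^1(B_\rho(x_0))}\le C$, and fractional Schauder and $L^p$ estimates bootstrap this to a uniform $C^{0,\alpha}_{\loc}(\R\setminus B)$ bound for $u_k^+$.

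Now the dichotomy depends on whether $u_k^-$ is bounded in $L^\infty_{\loc}(\R\setminus B)$. If it is, then $u_k$ is bounded in $C^{0,\alpha}_{\loc}(\R\setminus B)$, so along a subsequence $u_k\to u_\infty$ in $W^{1,p}_{\loc}(\R\setminus B)$; using $K_k\stackrel{*}{\rightharpoonup}K_\infty$ one passes to the limit to get $(-\D)^{1/2}u_\infty=K_\infty e^{u_\infty}$ in $\R\setminus B$ and $\mu=K_\infty e^{u_\infty}\,dx+\sum_{x_i\in B}\alpha_i\delta_{x_i}$, which is alternative (i). If $u_k^-$ is not locally bounded, pick $x_*\in\R\setminus B$ with $u_k(x_*)\to-\infty$ (along a subsequence); writing $u_k=v_k+w_k$ near $x_*$ as above with $v_k$ under control and $w_k$ $\tfrac12$-harmonic, its harmonic extension $\widetilde w_k$ is harmonic on a half-ball with $\widetilde w_k(x_*,0)\to-\infty$, so applying Harnack's inequality to the nonnegative harmonic function $\sup\widetilde w_k-\widetilde w_k$, together with a chaining argument along $\R\setminus B$, forces $w_k\to-\infty$, hence $u_k\to-\infty$, locally uniformly on $\R\setminus B$ — alternative (ii).

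It remains to prove the quantization: $\alpha_i=\pi$ in case (i) and $\alpha_i\ge\pi$ in case (ii). At each $x_i\in B$ one rescales using the invariance $u\mapsto u(\lambda\,\cdot)+\log\lambda$ of \eqref{equk} and extracts in the limit an entire solution of $(-\D)^{1/2}U=ce^U$ on $\R$ with finite mass; by the classification of \cite{DMR} this is, up to dilation and translation, $\log\tfrac{2}{1+x^2}$, with $\int_\R e^U\,dx=2\pi$. A localized Pohozaev identity for the extension, with the neck terms shown to vanish, then shows that no mass is lost and pins each atom to $\pi=\tfrac12\int_\R e^U\,dx$, which is half the total $Q$-curvature $2\pi$ of $\mathbb S^1$ (in case (ii) the regular limit $u_\infty$ is absent and only the bound $\alpha_i\ge\pi$ survives). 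The main obstacle throughout is the nonlocality of $(-\D)^{1/2}$: every candidate local estimate carries a tail term involving $u_k$ far away, and the crux of the $\ve$-regularity and of the propagation of $u_k\to-\infty$ is precisely to show that these tails are harmless — which is where the harmonic extension, the uniform bound on $\|u_k\|_{L_{1/2}(\R)}$, and the sign of $K_ke^{u_k}$ (or its $\pm$-splitting) are used; the second delicate point is the sharp quantization together with the very distinction between alternatives (i) and (ii).
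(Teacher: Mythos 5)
First, note that Theorem \ref{trmDM} is not proved in this paper: it is quoted from \cite{DM} (with the blow-up analysis going back to \cite{DMR}), so there is no internal proof to compare against. Your overall architecture --- representing $u_k$ through the logarithmic fundamental solution $\tfrac1\pi\log\tfrac1{|x|}$ of $(-\D)^{1/2}$, a Br\'ezis--Merle $\ve$-regularity for the local potential $v_k$, and handling the $\tfrac12$-harmonic remainder $w_k$ via the harmonic extension to $\R^2_+$ --- is indeed the strategy of \cite{DM,DMR}, and it mirrors the proof of Theorem \ref{thm1} in Section 3. Two calibration remarks: the exponential integrability of $v_k$ requires $p\,\ve_0<\pi$, not $p\,\ve_0<2\pi$ (with kernel $\tfrac1\pi\log\tfrac{2\rho}{|x-y|}$ and mass $\alpha$, Jensen gives $e^{pv}\sim|x|^{-p\alpha/\pi}$, integrable iff $p\alpha<\pi$); this is harmless for producing some $p>1$ since $\ve_0<\pi$, but it is exactly the constant that fixes the threshold, and the blow-up set should accordingly be defined with threshold $\pi$ (a fixed $\ve_0<\pi$ would place genuinely regular points into $B$).

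The genuine gap is the quantization step. Your mechanism --- rescale at $x_i$, obtain the entire bubble $\log\tfrac{2}{1+x^2}$ with $\int_\R e^U\,dx=2\pi$, show via a Pohozaev/no-neck-loss argument that no mass is lost, and then ``pin each atom to $\pi=\tfrac12\int_\R e^U$'' --- is internally inconsistent: if the profile is the full bubble and no mass is lost in the necks, the atom carries $2\pi$, not $\pi$. The constant $\pi$ in \eqref{quantDM} cannot come from a standard bubble. As the paper points out immediately after the statement, in case (i) with $N>0$ the function $K_k$ \emph{must} change sign near each $x_i$; if $K_k$ had a fixed sign there, the convergence $u_k\to u_\infty$ away from $B$ together with the $\ve$-regularity would rule out any atom at all (this is the Br\'ezis--Merle picture, where the ``compact'' alternative carries no concentration). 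The actual source of $\pi$ is the pinching/cusp phenomenon of \cite{DMR}: the rescaled limits are not the standard bubbles, the positive and negative parts of $K_ke^{u_k}$ interact at the concentration scale, and the surviving mass is half the total curvature of $\mathbb S^1$. Your proposal never engages with the sign-changing structure of $K_k$, so as written it would either prove the wrong constant ($2\pi$) or prove that case (i) atoms do not exist. Relatedly, the lower bound $\alpha_j\ge\pi$ in case (ii) does not ``survive from the classification''; it follows directly from the $\ve$-regularity with the correct threshold $\pi$ (any point with $\nu(\{x\})<\pi$ is a regular point).
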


The geometric interpretation of \eqref{equk} is not in terms of intrinsic curvatures, but rather of the curvature of $\Phi|_{\mathbb S^1}: S^1\to \mathbb{C}$, where $\Phi: D\to \mathbb{C}$ is a conformal immersion of the unit disc of the complex plane. The constant $\pi$ appearing in \eqref{quantDM} and \eqref{quantDM2} corresponds to half the total curvature of $\mathbb  S^1$ and is the $1$-dimensional analog of the constant $\frac{\Lambda_1}{2}$ appearing in Theorem \ref{trmARSM}. It arises as consequence of a pinching phenomenon, as already described in \cite{DMR}. Notice that Theorem \ref{trmDM} is more general than Theorem \ref{thbm} as no assumption on the sign of $K_k$ is made and, in fact, to have $N>0$ in case \emph{(i)} it is necessary that $K_k$ changes sign near the blow-up points.

\medskip

In this paper we shall focus on the $3$-dimensional case. In particular, on the $3$-dimensional analog of \eqref{eq2m},
suitably defined. Instead of the geometric interpretation of \eqref{equk} in terms of conformal immersions, we will consider the function $u$ to be the trace of a function defined in all of the half-space $\R^4_+$. This leads to a different geometric interpretation in terms of conformal geometry and fractional $Q$-curvature, which is the natural setting to understand \eqref{eq2m} and a curved generalization on it.

More precisely, let us denote any point $X\in\R^4$ by $X=(x,y)=(x_1,x_2,x_3,y)$ and set $\mathbb R^4_+=\{(x,y)\,:\,y>0\}$. We will identify $\mathbb R^3=\{(x,y)\,:\,y=0\}=\partial \R^4_+$. In the following, $\Delta$ will denote the Laplacian in $\R^4_+$ and $\Delta_x$ the Laplacian in $\R^3$.

Assume that $U\in C^\infty(\overline{ \R^4_+})$ is a solution to the problem
\begin{equation}\label{extension0}
\left\{
\begin{split}
&\Delta^2 U=0\quad\text{in }\R^4_+,\\
&\partial_y U=0\quad\text{on }\R^3.
\end{split}
\right.
\end{equation}
Let $u$ be its Dirichlet data $u=U|_{\R^3}$. If $U\in W^{2,2}(\mathbb R^4_+)$, then, as we shall see, $U$ is characterized by the Poisson representation formula
\begin{equation}\label{kernel-Dirichlet-introduction}
U(x,y)=\frac{4}{\pi^2} \int_{\mathbb R^3}\frac{y^3}{(y^2+|x-\tilde x|^2)^3} u(\tilde x)\,d\tilde x.
\end{equation}
Define the operator on $\mathbb R^3$ by
\begin{equation}\label{ope_def}
\mathcal L_{\frac{3}{2}}U:=\frac{1}{2}\lim_{y\rightarrow 0}\partial_y \D U,
\end{equation}
Then it is known (see Proposition \ref{prop:relation1} in Section \ref{section:kernel}) that
\begin{equation*}\label{relation}
\mathcal L_{\frac{3}{2}}U=(-\Delta_x)^{\frac{3}{2}} u,
\end{equation*}
where the $\frac{3}{2}$-fractional Laplacian is defined as the operator with Fourier symbol $|\xi|^{3}$.
Note that $\mathcal L_{\frac{3}{2}}$ can also be defined in a distributional sense. Indeed, given $u\in L_{loc}^1(\mathbb R^3)$, we say that $U\in W^{2,2}(\mathbb R^4_+)$ satisfying $\partial_y U=0$ on $\mathbb R^3$ and $\D^2 U=0$ in $\R^4_+$ is a weak solution to
$\mathcal L_{\frac{3}{2}} U=w, $
 if
\begin{equation*}
0=\int_{\mathbb R^4_+} \Delta U\Delta \psi\,dxdy-2\int_{\mathbb R^3} w\psi \,dx
\end{equation*}
for every test function $\psi\in\mathcal C^{\infty}(\mathbb R^4_+)$ with compact support in $\overline{\mathbb R^4_+}$
and satisfying $\partial_y \psi=0$ on $y=0$.

However, to include a larger class of solutions, we will need to admit functions with polynomial growth at infinity and thus, not in the energy class $W^{2,2}$. Then uniqueness is lost in general, in the sense that several solutions $U$ to \eqref{extension0} can have the same Dirichlet datum $U|_{\R^3}$. Of course this is not the case if we restrict to solutions as in \eqref{kernel-Dirichlet-introduction}.

\begin{defn}\label{defLs}
We say that a solution $U$ to \eqref{extension0} is \emph{representable} if it coincides with its Poisson formula representation \eqref{kernel-Dirichlet-introduction} and $\mathcal L_{\frac{3}{2}}U$ is well defined. In particular, a representable solution has boundary value $U|_{y=0}$ belonging to the space $L_6(\R^n)$ where
$$L_s(\R^n):=\left\{ u\in L^1_{loc}(\R^n):\int_{\R^n}\frac{|u(x)|}{1+|x|^s}\,dx<\infty \right\}.$$

\end{defn}

We will give sufficient conditions for representability in Proposition \ref{prop:kernel3}.\\

Let $\Sigma_0$ be a smoothly bounded domain in $\mathbb R^3$. We would like to study the (localized) nonlinear equation
 \begin{equation}\label{equation-extension-introduction}\left\{ \begin{split}
  \D^ 2 U=0 \quad \text{ in }\R^4_+,\\
   \partial_y U=0 \quad \text{ in }\R^3,\\
    \mathcal L_{\frac{3}{2}}U=Q(x)e^{3u} \quad\text{on }  \Sigma_0,
  \end{split}\right.
 \end{equation}
 where, as above, $u:=U|_{y=0}$. Of course, this is equivalent to the equation
 \begin{equation}\label{Liouville}
(-\Delta_x)^{\frac{3}{2}}u=Q(x)e^{3u} \quad\text{in }\Sigma_0.
\end{equation}

The interpretation of \eqref{Liouville} in conformal geometry will be further explained in Section \ref{section:kernel} (see \cite{Chang-Qing:zeta1,Chang-Qing:zeta2,Case:boundary-operators} and the survey \cite{Chang:survey}). Indeed, on the boundary $M^3$ of a 4-dimensional manifold it is possible to define a third order curvature, the $T$-\emph{curvature} , in relation to a 4-dimensional Gauss-Bonnet formula for manifolds with boundary \cite{Chang-Qing-Yang}. This $T$-curvature satisfies the conformal property
 \begin{equation}\label{equation-T-introduction}
 P^{g} u +T^{g}=T^{\tilde g}e^{3u},
 \end{equation}
under the conformal change of metric $\tilde g=e^{2u}g$, where $P$ is a third order boundary operator, corresponding to the (fourth-order) Paneitz operator on the four-dimensional ambient manifold. In the flat setting, \eqref{equation-T-introduction} reduces to \eqref{Liouville} for a conformal metric $\tilde g=e^{2u}|dx|^2$ on $\mathbb R^3$.

In some particular cases, this operator $P$ can be understood as the limit $\gamma\to 3/2$ of the conformal fractional Laplacian $P_\gamma$ (see, for instance, \cite{Chang-Gonzalez,Case-Chang}, the survey \cite{Gonzalez:survey} and the references therein for the necessary background). $P_\gamma$
is a non-local operator with principal symbol the same as $(-\Delta_M)^\gamma$ and, in the flat case, $P_{\gamma}=(-\Delta_x)^{\gamma}$.\\

Our main theorem studies concentration phenomena for the Liouville equation \eqref{Liouville}
in dimension 3. 
In analogy with Theorem \ref{trmARSM} we will see that solutions can blow up on isolated points and also on the zero set of certain biharmonic functions. More precisely, let us set
\begin{equation}\label{K}
\begin{split}
\K(\Sigma_0):=\big\{& H\in C^\infty(\R^4_+\cup \Sigma_0): \D^2H=0,\,H\le 0, \text{ in }\mathbb R^4_+,\\
&\qquad\qquad\qquad\quad   H\not\equiv 0,\, \partial_yH=\mathcal{L}_\frac32H=0\text{ on }\Sigma_0\big\}.
\end{split}
\end{equation}

\begin{thm} \label{thm1}
Let $(U_k)\subset C^0(\overline{\R^4_+})$ be a sequence of representable functions satisfying
 \begin{equation}\label{eq-1ext}\left\{ \begin{split}
  \D^ 2 U_k=0 \quad \text{ in }\R^4_+,\\
   \partial_y U_k=0 \quad \text{ in }\R^3,\\
    \mathcal L_{\frac{3}{2}}U_k=Q_ke^{3u_k} \quad\text{on }  \Sigma_0,
  \end{split}\right.
 \end{equation}
where $u_k:=U_k|_{y=0}$ and $Q_k\in C^0(\overline{\Sigma_0})$ is uniformly bounded in $L^\infty(\Sigma_0)$.
We assume that
\begin{align*}\label{cond-vol}
 \int_{\Sigma_0}e^{3u_k}\,dx\leq C.
\end{align*}
and
\begin{equation}\label{extra-assumption}
\int_{\R^3}\frac{u_k^+(x)}{1+|x|^6}\,dx\leq C.
\end{equation}
Set
$$S_1:=\left\{\bar x\in\Sigma_0:\lim_{\ve\to 0^+}\liminf_{k\to\infty}\int_{B_\ve(\bar x)}|Q_k|e^{3u_k}\,dx\geq \frac{\Lambda_1}{2} \right\},\quad \Lambda_1=2|\mathbb S^3|=4\pi^2.$$
Then $S_1$ is a finite set and, up to a subsequence, one of the following is true:
\begin{itemize}
 \item [(i)] $U_k\to U_\infty$ in $C^{2,\alpha}_{loc}(\R^4\cup (\Sigma_0\setminus S_1))$ for any $\alpha\in [0,1)$,


 \item [(ii)] There exists $\Phi\in \K(\Sigma_0)$ and numbers $\beta_k\to\infty$ such that
 $$\frac{U_k}{\beta_k}\to\Phi\quad\text{in }C^{2,\alpha}_{\loc}((\R^4_+\cup \Sigma_0)\setminus S),\quad S=S_\Phi \cup S_1,$$
 where
$S_\Phi:=\{x\in \Sigma_0:\Phi(x)=0\}$.
Moreover $S_\Phi$ has dimension at most $2$.
\end{itemize}
\end{thm}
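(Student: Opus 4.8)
The plan is to mirror the blow-up analysis of Adimurthi--Robert--Struwe and Martinazzi for the local equation \eqref{eq2m}, but carried out at the level of the extension $U_k$ in $\R^4_+$ rather than directly on the trace $u_k$. The starting point is the Poisson formula \eqref{kernel-Dirichlet-introduction}, which lets me split each $U_k$ as $U_k = P[u_k^+] - P[u_k^-] + h_k$ where $h_k$ is the biharmonic correction accounting for the localized curvature; more precisely, I would write $\mathcal L_{\frac32}U_k = Q_k e^{3u_k}\chi_{\Sigma_0}+(\text{stuff outside }\Sigma_0)$ and use the bounds $\int_{\Sigma_0}e^{3u_k}\le C$, $\|Q_k\|_{L^\infty(\Sigma_0)}\le C$ together with \eqref{extra-assumption} to get a clean a priori control on the ``Riesz-type'' potential generated by the right-hand side. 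The definition of $S_1$ via the $\frac{\Lambda_1}{2}=2|\mathbb S^3|$ threshold is exactly the $3$-dimensional analog of the $\frac{\Lambda_1}{2}$ in Theorem \ref{trmARSM}; finiteness of $S_1$ follows from a standard covering argument once I know $\int_{\Sigma_0}|Q_k|e^{3u_k}\,dx\le C$ (which is immediate), since each point of $S_1$ carries at least $\frac{\Lambda_1}{2}$ of mass.

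Next I would establish the dichotomy away from $S_1$. Fix a compactly contained ball $B\subset\Sigma_0\setminus S_1$; by definition of $S_1$ one has $\int_{2B}|Q_k|e^{3u_k}\,dx<\frac{\Lambda_1}{2}-\delta$ along a subsequence, for $k$ large. Decompose $u_k = v_k + w_k$ on $2B$, where $v_k$ solves $(-\Delta_x)^{3/2}v_k = Q_k e^{3u_k}$ with the right-hand side truncated to $2B$ (via its Poisson-extension analog $V_k$ on $\R^4_+$), and $w_k$ is the remaining $\frac32$-harmonic piece, i.e.\ $\D^2 W_k=0$, $\partial_y W_k=0$, $\mathcal L_{\frac32}W_k=0$ on $2B$. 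The sub-critical mass of the right-hand side, together with a Brezis--Merle / Adams-type inequality for $(-\Delta)^{3/2}$ (the $3$-dimensional, nonlocal counterpart of the Adams--Moser--Trudinger estimate used in \cite{ARS,MarCC}), gives that $e^{p|v_k|}$ is bounded in $L^1_{loc}$ for some $p>1$; the sharp threshold is precisely where $\frac{\Lambda_1}{2}$ enters. The function $w_k$ is then controlled by $u_k^+$ through \eqref{extra-assumption} and interior estimates for the biharmonic extension with Neumann data, so it is either locally bounded above or, after dividing by $\beta_k:=\|w_k\|_{L^\infty(B)}\to\infty$, converges to a nonpositive biharmonic function. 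Patching these local statements and using elliptic (biharmonic) regularity up to the boundary $\Sigma_0$ for the Neumann problem \eqref{eq-1ext} upgrades the convergence to $C^{2,\alpha}_{loc}$ and produces either alternative (i) (when $u_k$ stays bounded above, hence $U_k$ converges) or a limit profile $\Phi$ with $\D^2\Phi=0$, $\Phi\le 0$, $\partial_y\Phi=\mathcal L_{\frac32}\Phi=0$ on $\Sigma_0\setminus S_1$, i.e.\ $\Phi\in\K(\Sigma_0)$ after checking $\Phi\not\equiv0$. The dimension bound $\dim S_\Phi\le 2$ comes from the fact that $\Phi$ is real-analytic in the interior and its trace is a nontrivial solution of $(-\Delta_x)^{3/2}\Phi|_{\Sigma_0}=0$ vanishing on $S_\Phi$; a real-analyticity/unique-continuation argument (as in the even-dimensional case) forbids $S_\Phi$ from having nonempty interior in $\R^3$, and the fact that $\Phi\le0$ attains its maximum $0$ on $S_\Phi$ forces it to be a level set that is at most $2$-dimensional.

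The main obstacle is the nonlocality: unlike in \eqref{eq2m}, the value of $\mathcal L_{\frac32}U_k$ on $\Sigma_0$ does not determine $U_k$ locally, so one cannot simply quote interior estimates for $(-\Delta_x)^{3/2}$ on a small ball. This is exactly why the argument must be run on the extension $U_k$ and why the representability hypothesis and the growth control \eqref{extra-assumption} are imposed: they are what let me use the Poisson kernel to convert the nonlocal right-hand side into a biharmonic Dirichlet/Neumann problem in $\R^4_+$ with quantitatively controlled data, and they guarantee that the ``far-field'' contribution of $u_k$ to $U_k$ on $B$ is harmless. A second, more technical difficulty is proving the sharp Brezis--Merle threshold $\frac{\Lambda_1}{2}$ for the $\frac32$-Laplacian with the correct geometric constant $2|\mathbb S^3|=4\pi^2$; this requires the fractional Adams inequality in the form adapted to the half-space extension, and identifying the extremal bubble (the $3$-dimensional spherical solution $u(x)=\log\frac{2}{1+|x|^2}$, whose extension to $\R^4_+$ carries total curvature $\Lambda_1$), so that a concentration with mass below $\frac{\Lambda_1}{2}$ cannot destroy local compactness. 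Once these two points are in place, the rest of the proof is a by-now-standard iteration/patching of local alternatives, combined with biharmonic boundary regularity, closely following \cite{ARS,MarCC,DM}.
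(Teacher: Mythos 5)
Your overall architecture (split $U_k$ into a logarithmic potential $V_k$ of the localized right-hand side plus a biharmonic remainder $H_k$, prove a Brezis--Merle estimate below the $\tfrac{\Lambda_1}{2}$ threshold, then run a bounded/unbounded dichotomy on the remainder) matches the paper. Two remarks on the compactness part. First, the paper does not need a fractional Adams--Moser--Trudinger inequality or the identification of an extremal bubble: the exponential integrability of $V_k$ below mass $\tfrac{\Lambda_1}{2}=2\pi^2$ is obtained by the elementary Brezis--Merle device of applying Jensen's inequality to the representation $V_k=\tfrac{1}{2\pi^2}\log\tfrac{1}{|\cdot|}*\big(Q_ke^{3u_k}\chi_{\Sigma_0}\big)$, the constant $2\pi^2$ being exactly what makes $|X-(\tilde x,0)|^{-3p(1-\delta)}$ locally integrable on $\R^3$ for some $p>1$. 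Your Adams-type route is plausible but heavier. Second, the paper deliberately keeps the decomposition global ($V_k$ and $H_k$ are defined on all of $\R^4_+$ via the log-kernel and the Poisson kernel, with $H_k$ reflected evenly across $y=0$), precisely to avoid the kernel $\mathcal H$ that appears when one localizes to $2B$ as you do; with a localized ``$\tfrac32$-harmonic'' remainder $w_k$ you would have to control the nonuniqueness coming from functions like $h(x)y^2$.

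The genuine gap is in your treatment of $S_\Phi$. You assert that the trace of $\Phi$ on $\Sigma_0$ is ``a nontrivial solution'' and then invoke real-analyticity/unique continuation to conclude $\dim S_\Phi\le 2$. But nontriviality of the trace is exactly the point at issue, and interior analyticity of $\Phi$ in $\R^4$ does not prevent $\Phi|_{y=0}\equiv 0$: the function $\Phi(x,y)=-y^2$ is biharmonic, nonpositive, satisfies $\partial_y\Phi=\partial_y\Delta\Phi=0$ on $y=0$, and vanishes identically on $\R^3$, which would make $S_\Phi=\Sigma_0$ three-dimensional. Ruling this out is the main new difficulty of the theorem compared with the local four-dimensional case (where the normalization of $\varphi$ lives in the same set where the blow-up occurs). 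The paper's argument is specific: if $B_{2\ve}(\xi)\subset S_\Phi$ then $\Delta_x\Phi=0$ there, and Pizzetti's formula forces $\partial^2_{yy}\Phi(\xi,0)<0$, hence $\int_0^r\Phi(\xi,y)\,dy\sim c\,r^3$ with $c<0$; on the other hand, writing $H_k(\xi,\cdot)$ as the Poisson integral of $h_k$ and using $\int_{\R^3} h_k^-(1+|x|^6)^{-1}dx\approx\beta_k$ together with $h_k=o(\beta_k)$ on $B_\ve(\xi)$, one computes $\int_0^r H_k(\xi,y)\,dy=O(r^4\beta_k)$, a contradiction. Without this (or an equivalent) quantitative step, your proof of alternative (ii) and of the dimension bound is incomplete.
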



\begin{rem}
Since the operator $(-\Delta_x)^{\frac{3}{2}}$ is  non-local, the extra assumption \eqref{extra-assumption} is needed in order to control the behavior of $u_k$ outside of $\Sigma_0$ (where the equation lives), but it does not prevent concentration happening. Indeed,  take $u$ to be the model concentration solution to \eqref{Liouville} in $\R^3$ with finite volume condition $\int_{\R^3}e^{3u}\,dx<\infty$. This is, of the form  $u(x)\approx -\log|x|-c|x|^2$, $c\geq 0$. Then $u_k(x)=u(kx)+\log k$ also satisfies the same equation with same volume. Now we define $U_k$ using the extension \eqref{extension0}. Clearly $U_k$ satisfies the assumptions of Theorem \ref{thm1}.
\end{rem}

\begin{rem}
Let us make a comment on localization. Note that if $h$ is a harmonic function in $\R^3$ then $H(x,y)=h(x)y^2\in \mathcal{H}$, where we have defined
\begin{equation}\label{H}
\mathcal{H}:=\big\{H\in C^\infty(\overline{\mathbb R^4_+}): \D^2H=0 \text{ in }\mathbb R^4_+,\, H=\partial_yH=0\text{ on }\mathbb R^3,\, \mathcal{L}_\frac32H=0\text{ on }\Sigma_0\big\}.
\end{equation}
One could localize the first two equations in \eqref{equation-extension-introduction} to a subset $\Sigma=\Omega\cap \{y\geq 0\}$,  where $\Omega$ is a smoothly bounded open subset of $\R^4$ intersecting $\R^3\times\{0\}$. However, by working on $\mathbb R^4_+$ with globally defined representable solutions  we avoid the presence of the kernel \eqref{K} and the extension function $U$ is unique for each $u$. In other words, if for a solution $U$ of \eqref{equation-extension-introduction} one removes the  representability assumption, we could construct a sequence $U_k=U\pm ky^2$ (or $U_k=U+H_k$ with $H_k\in\K$) still solving \eqref{equation-extension-introduction}. Here the sequence $(U_k)$ is unbounded, however, $(u_k)$ is bounded.
This is in agreement, again, with the general fact that the fractional Laplacian is a non-local operator.
\end{rem}



Several ideas in the proof of Theorem \ref{thm1} rely on the paper \cite{ARS} on concentration phenomena for a fourth-order Liouville's equation in dimension four. Both are inspired from the two-dimensional case \cite{Brezis-Merle,Li-Shafrir}, where the main step is to prove a Brezis-Merle estimate. This is done in Lemma \ref{lemmaBM}.\\

We next show that case \emph{(ii)} of Theorem \ref{thm1} is non-trivial:

\begin{prop}\label{prop:existence}
Let  $\Phi \in \mathcal{K}(\Sigma_0)$ solve $\partial_y \Phi(x,0)=0$ for every $x
\in \R^3$, assume further that it is representable and set
$$S_\Phi:=\{x\in\Sigma_0:\Phi(x)=0\}.$$
Then, given a sequence $(Q_k)\subset L^\infty(\Sigma_0)$ with $\|Q_k\|_{L^\infty(\Sigma_0)}\leq C$, there exists a sequence of solutions $(U_k)\subset C^0(\overline{\R^4_+})$ to \eqref{eq-1ext} such that $U_k\to\infty$ on $S_\Phi$ and case  $(ii)$ of Theorem \ref{thm1} holds with $S_1=\emptyset$.
\end{prop}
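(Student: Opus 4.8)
The plan is to construct $(U_k)$ as a perturbation of $\beta_k\Phi$ for a suitable divergent sequence $\beta_k\to+\infty$, in the spirit of the Adimurthi--Robert--Struwe-type constructions and their refinement in \cite{HIM}. More precisely, I would look for solutions of the form $U_k=\beta_k\Phi+V_k$, where $V_k$ is a correction with lower order growth chosen so that the nonlinear equation $\mathcal L_{3/2}U_k=Q_ke^{3u_k}$ holds on $\Sigma_0$. Writing $u_k=\beta_k\varphi+v_k$ with $\varphi:=\Phi|_{y=0}$, the equation becomes $(-\Delta_x)^{3/2}v_k=Q_ke^{3\beta_k\varphi}e^{3v_k}$ on $\Sigma_0$ (using $\mathcal L_{3/2}\Phi=0$ on $\Sigma_0$ and linearity). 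Since $\varphi\le 0$ with $\varphi=0$ exactly on $S_\Phi$, the weight $e^{3\beta_k\varphi}$ collapses to a measure supported on $S_\Phi$ as $\beta_k\to\infty$; the heart of the matter is to solve for $v_k$ uniformly and show $v_k$ stays bounded (so that $U_k/\beta_k\to\Phi$, giving case (ii) with $S_1=\emptyset$, and $U_k\to+\infty$ on $S_\Phi$ whenever $\beta_k$ is chosen so that $\beta_k\Phi+V_k\to+\infty$ there — note that on $S_\Phi$ one needs $\Phi$ to vanish but its normal derivative into $\R^4_+$ to be negative, forcing $U_k$ up only if $V_k$ is made to diverge there, so in fact the divergence on $S_\Phi$ will come from the corrector, built to blow up like a standard bubble concentrating along $S_\Phi$).

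The key steps, in order, would be: (1) Fix the profile. Choose $\Phi\in\mathcal K(\Sigma_0)$ as in the hypothesis, representable with $\partial_y\Phi(x,0)=0$; record that $\varphi=\Phi|_{y=0}\le 0$ vanishes precisely on $S_\Phi$, which has dimension at most $2$ by the structure of biharmonic functions (this is consistent with Theorem \ref{thm1}(ii)). (2) Build an approximate solution. On a tubular neighborhood of $S_\Phi$ in $\Sigma_0$, glue in rescaled copies of the explicit entire solution of $(-\Delta_x)^{3/2}w=\Lambda_1 e^{3w}$ on $\R^3$ (the standard bubble $w(x)=\log\frac{2}{1+|x|^2}$, whose total mass is $\Lambda_1=4\pi^2$), with concentration scale $\varepsilon_k\to 0$ tied to $\beta_k$, producing an approximate $u_k^{app}=\beta_k\varphi+w_{\varepsilon_k}+(\text{cutoff corrections})$; then let $U_k^{app}$ be its representable extension. (3) Estimate the error. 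Show $\mathcal L_{3/2}U_k^{app}-Q_ke^{3u_k^{app}}$ is small in a suitable weighted norm, using that away from $S_\Phi$ the factor $e^{3\beta_k\varphi}$ is exponentially small, and near $S_\Phi$ the bubble solves the equation up to curvature/cutoff errors. (4) Fixed point / Lyapunov--Schmidt. Linearize at $U_k^{app}$; the linearized operator is $(-\Delta_x)^{3/2}-3Q_ke^{3u_k^{app}}(\cdot)$, whose kernel near a concentrating bubble is spanned by the known Jacobi fields (dilations and translations of $w$). Invert modulo this finite-dimensional kernel with uniformly bounded inverse (this is where uniform bounds on $Q_k$ and nondegeneracy of the bubble enter), solve the auxiliary equation by contraction to get $v_k=o(1)$, and finally adjust the concentration parameters to kill the finite-dimensional (reduced) part — here the location along $S_\Phi$ and the scale are the free parameters. (5) Conclude: the resulting $U_k=U_k^{app}+\text{correction}$ solves \eqref{eq-1ext}, satisfies $\int_{\Sigma_0}e^{3u_k}\le C$ (mass $\approx\Lambda_1\cdot\#$bubbles) and \eqref{extra-assumption} (since $u_k^+$ is controlled by the bounded corrector plus a localized bubble), $U_k/\beta_k\to\Phi$ in $C^{2,\alpha}_{loc}$ off $S$, and $U_k\to+\infty$ on $S_\Phi$; moreover $S_1=\emptyset$ because the local mass at every point is $<\Lambda_1/2$ when bubbles are placed with separated masses or, if one wants exactly $\Lambda_1$ per bubble, by noting the $\liminf$ in the definition of $S_1$ can be made to miss the threshold through the choice of the blow-up rate — alternatively use a single bubble of mass exactly $\Lambda_1$ so that the local limit is $\Lambda_1 > \Lambda_1/2$ only at isolated points, which would force those into $S_1$; to genuinely get $S_1=\emptyset$ I would instead spread the concentration uniformly along $S_\Phi$ so that no single point carries mass $\ge\Lambda_1/2$, which is possible precisely because $\dim S_\Phi\ge 1$ in the relevant examples.

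The main obstacle I expect is step (4): establishing the \emph{uniform} invertibility of the linearized nonlocal operator near a bubble that concentrates not at a point but along a positive-dimensional set $S_\Phi$. The nonlocality of $(-\Delta_x)^{3/2}$ means the usual ODE/gluing techniques do not apply directly, and one must either work with the fourth-order extension $\Delta^2U=0$ in $\R^4_+$ (turning the nonlocal problem into a local Neumann-type problem, at the cost of a boundary operator $\mathcal L_{3/2}$) and prove coercivity of the associated bilinear form $\int_{\R^4_+}\Delta U\Delta\psi\,dxdy$ modulo the Jacobi directions, or develop sharp nonlocal a priori estimates with the weighted $L_6$ norms from Definition \ref{defLs}. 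Controlling the interaction between the linear profile $\beta_k\Phi$ (which is unbounded) and the concentrating bubble, while keeping $v_k$ bounded, is the delicate point; the representability assumption on $\Phi$ and the Poisson formula \eqref{kernel-Dirichlet-introduction} should be exactly what makes the decomposition $U_k=\beta_k\Phi+V_k$ well defined and the error estimates tractable. A secondary difficulty is ensuring the constructed $U_k$ remains representable — this should follow from Proposition \ref{prop:kernel3} once the growth of $u_k$ at infinity is shown to lie in $L_6(\R^3)$, which the construction guarantees by making $V_k$ and the bubble compactly supported modulo harmless harmonic pieces.
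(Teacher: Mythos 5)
Your construction is not the one the paper uses, and as written it has a genuine gap at exactly the step you flag as the main obstacle: step (4). You never establish (and it is far from standard) the uniform invertibility of the linearized operator $(-\Delta_x)^{3/2}-3Q_ke^{3u_k^{app}}$ for a profile that degenerates along a positive-dimensional set $S_\Phi$; there is no known family of ``Jacobi fields'' or nondegeneracy theory for bubbles smeared along a $1$- or $2$-dimensional zero set, so the Lyapunov--Schmidt reduction cannot be carried out as described. Worse, the bubbling ansatz works \emph{against} the conclusion you need: a standard bubble carries mass $\Lambda_1=4\pi^2>\Lambda_1/2$, so any point where it concentrates lands in $S_1$, contradicting the requirement $S_1=\emptyset$; your remedy (``spread the concentration uniformly along $S_\Phi$'') is not a construction. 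There is also a small conceptual slip in the setup: since $\partial_y\Phi(x,0)=0$ is assumed, the relevant sign condition on $S_\Phi$ concerns $\partial_{yy}\Phi$, not the first normal derivative, and in any case no bubble is needed to make $U_k\to\infty$ on $S_\Phi$.

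The missing idea is that the divergence on $S_\Phi$ can be produced by a mere additive constant, after which everything is soft. The paper takes the ansatz $u_k=v_k+k\phi+c_k$ with $\phi=\Phi|_{\Sigma_0}$, $\beta_k=k$, and
$$c_k=\tfrac16\log\frac{\ve}{\lambda_k},\qquad \lambda_k:=\int_{\Sigma_0}e^{6k\phi}\,dx\to 0,$$
so that $c_k\to+\infty$ but $c_k=o(k)$, and by construction $\|e^{3(k\phi+c_k)}\|_{L^2(\Sigma_0)}=\sqrt{\ve}$. The corrector $v_k$ is then obtained by Schauder's fixed point theorem applied to
$$v\mapsto \frac{1}{2\pi^2}\int_{\Sigma_0}\log\Bigl(\frac{1}{|x-\tilde x|}\Bigr)Q_k\,e^{3(k\phi+c_k)}e^{3v}\,d\tilde x,$$
which for $\ve$ small maps the unit ball of $C^0(\bar\Sigma_0)$ into itself by H\"older's inequality; no linearization, nondegeneracy, or reduced problem is needed. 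On $S_\Phi$ one has $u_k=v_k+c_k\ge -1+c_k\to\infty$, the total mass $\int_{\Sigma_0}e^{3u_k}\,dx=O(\sqrt{\ve})$ is uniformly small so $S_1=\emptyset$ for free, and representability of $\Phi$ gives $U_k=\bar U_k+k\Phi$ with $\int_{\B_R}|\bar U_k|\,dX=o(k)$, whence $U_k/k\to\Phi$. Your instinct to exploit $e^{3\beta_k\phi}\to 0$ off $S_\Phi$ is right, but the normalization should go into a constant $c_k$ controlling the total mass, not into a concentrating bubble.
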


It remains open to study the asymptotic behavior near concentration points. We hope to return to this problem elsewhere. We also expect that results similar to Theorem \ref{thm1} and Proposition \ref{prop:existence} hold in higher dimension, although some technical difficulties might appear.\\

The crucial difficulty in the proofs in this paper is the fact that the operator $(-\Delta)^{\frac{3}{2}}$ is non-local, and some of the usual arguments for elliptic local problems cannot be used. We are able to deal with this issue by passing to a local equation in the extension. While this is a common procedure to handle the fractional Laplacian $(-\Delta_x)^\gamma$ in $\mathbb R^n$, for powers $\gamma\in(0,1)$, here we present the corresponding scheme for higher powers of the Laplacian $\gamma\in(0,\frac{n}{2}]$. In particular, we obtain explicit formulas for solutions of the poly-harmonic equation in the upper half-space such as \eqref{extension0}, in terms of its Dirichlet data and its Neumann-type data \eqref{ope_def}. These formulas are valid in any odd dimension $n$.

\begin{prop}\label{prop:kernel}
Let $n=2m+1$ be an odd integer. For $U\in W^{m+1,2}(\R^{n+1}_+)$ we consider the extension problem
\begin{equation}\label{problem-extension}\left\{
\begin{split}
&\Delta^{m+1}  U=0\quad\text{in }\mathbb R^{n+1}_+,\\
&\partial_y^{2j+1} U(\cdot,0)=0\quad\text{on }\mathbb R^n,\quad \text{for every}\quad 2j+1<m+1, \quad\text{and}\\
&\partial^{2j}_y U(\cdot, 0)  = (\Delta_x)^{j} U(\cdot, 0)\prod_{l=1}^j
\frac{1}{2m+1-4(l-1)}\quad\text{on }\mathbb R^n,\quad \text{for every}\quad 2j<m+1,
\end{split}\right.
\end{equation}
with a Dirichlet condition
\begin{equation}\label{Dirichlet-condition}
U=f\quad\text{on }\mathbb R^{n}.
\end{equation}
Then a solution can be written by the Poisson formula
\begin{equation}\label{kernel-Dirichlet}
U(x,y)=\int_{\mathbb R^n} {\mathcal K}_{\frac{n}{2}}(x-\tilde x,y) f(\tilde x)\,d\tilde x,
\end{equation}
where
\begin{equation}\label{kernel-D}
\mathcal K_{\frac{n}{2}}(x,y):=
\kappa_n\frac{y^{n}}{(y^2+|x|^2)^{n}},\quad \kappa_n=\frac{\Gamma\left(n\right)}{\Gamma{(\frac{n}{2})}\pi^{\frac{n}{2}}}.
\end{equation}
If one considers the extension problem \eqref{problem-extension} with a Neumann-type condition
\begin{equation}\label{Neumann-condition}
-\frac{\Gamma(m+\frac{1}{2})}{ m!\sqrt \pi}\lim_{y\rightarrow 0}\partial_y \Delta^{m} U=w\quad\text{on }\mathbb R^n,
\end{equation}
instead of \eqref{Dirichlet-condition}
then a solution can be written as
\begin{equation*}
U(x,y)=\int_{\mathbb R^n} \tilde{\mathcal K}_{\frac{n}{2}}(x-\tilde x,y) w(\tilde x)\,d\tilde x,
\end{equation*}
where
\begin{equation}\label{kernel}
\tilde{\mathcal K}_{\frac{n}{2}}(x,y)
=\tilde\kappa_n\log\frac{1}{y^2+|x|^2},\quad  \tilde\kappa_n=
\frac{1}{2^n\Gamma{(\frac{n}{2})}\pi^{\frac{n}{2}}}.
\end{equation}
\end{prop}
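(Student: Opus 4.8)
The plan is to compute everything on the Fourier side in the tangential variable $x$. First I would apply the partial Fourier transform $x\mapsto\xi$ to the polyharmonic equation $\Delta^{m+1}U=0$; writing $\hat U(\xi,y)$, the equation becomes the ordinary differential equation $\bigl(\partial_y^2-|\xi|^2\bigr)^{m+1}\hat U=0$ in $y>0$. Its solutions that stay bounded (more precisely, that lie in the energy class $W^{m+1,2}$, which rules out the $e^{+|\xi|y}$ branches) form the $(m+1)$-dimensional space spanned by $y^j e^{-|\xi|y}$ for $j=0,\dots,m$. So I would write $\hat U(\xi,y)=e^{-|\xi|y}\sum_{j=0}^m c_j(\xi)\,y^j$ and determine the $c_j(\xi)$ from the $m+1$ conditions in \eqref{problem-extension} (the $\partial_y^{2j+1}U(\cdot,0)=0$ conditions for $2j+1<m+1$ and the $\partial_y^{2j}U(\cdot,0)=(\Delta_x)^j U(\cdot,0)\prod_{l=1}^j (2m+1-4(l-1))^{-1}$ conditions for $2j<m+1$), together with the Dirichlet datum $\hat U(\xi,0)=\hat f(\xi)$. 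The odd conditions translate, at $y=0$, into linear relations that are odd in $|\xi|$; the even conditions, since $\widehat{(\Delta_x)^j U}(\xi,0)=(-1)^j|\xi|^{2j}\hat f(\xi)$, give relations matching powers of $|\xi|^2$. The key claim to verify is that the unique solution of this linear system is $\hat U(\xi,y)=p_m(|\xi|y)\,e^{-|\xi|y}\hat f(\xi)$ for a specific universal polynomial $p_m$ of degree $m$ with $p_m(0)=1$; one then checks that the chosen normalizations $\prod_l\frac{1}{2m+1-4(l-1)}$ are exactly what makes the $c_j$ consistent, i.e. the system is not overdetermined. This is a finite computation in the $c_j$'s and is where most of the bookkeeping lives; I expect it to be the main technical obstacle, though it is elementary.

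The second step is to recognize $p_m(|\xi|y)e^{-|\xi|y}$ as the Fourier transform (in $\xi$, evaluated at $x$) of the claimed kernel $\mathcal K_{n/2}(x,y)=\kappa_n y^n/(y^2+|x|^2)^n$. The clean way to do this is to recall the classical formula $\widehat{\,(|\xi|y)^k e^{-|\xi|y}\,}$, or more directly to use that the Fourier transform of $e^{-|\xi|y}$ in $\mathbb R^n$ is the Poisson kernel $c_n\, y/(y^2+|x|^2)^{(n+1)/2}$, and that multiplication by $|\xi|y$ on the Fourier side corresponds to applying $y\cdot(-\Delta_x)^{1/2}$ — but since $n$ is odd, a cleaner route is to differentiate the Poisson-type kernels in $y$: the functions $y^{n}/(y^2+|x|^2)^{n}$ arise from applying $(y\partial_y)$-type operators to $y/(y^2+|x|^2)^{(n+1)/2}$ or, equivalently, from the identity that $y^{2m+1}/(y^2+|x|^2)^{2m+1}$ has radial Fourier transform a multiple of $p_m(|\xi|y)e^{-|\xi|y}$. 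Concretely I would verify the inversion formula $\int_{\mathbb R^n}\kappa_n\frac{y^n}{(y^2+|x|^2)^n}e^{-ix\cdot\xi}\,dx = p_m(|\xi|y)e^{-|\xi|y}$ by reducing to a one-dimensional Hankel-transform computation and pinning the constant $\kappa_n=\Gamma(n)/(\Gamma(n/2)\pi^{n/2})$ by evaluating at $\xi=0$, which forces $\int_{\mathbb R^n}\mathcal K_{n/2}(x,y)\,dx=1$; that normalization integral is a standard Beta-function computation. Combining with Step 1, $\widehat U(\xi,y)=\widehat{\mathcal K_{n/2}(\cdot,y)}(\xi)\,\hat f(\xi)$, i.e. \eqref{kernel-Dirichlet} holds by the convolution theorem.

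For the Neumann version, I would use the same ODE analysis but now impose \eqref{Neumann-condition} together with the structural conditions in \eqref{problem-extension}. The quantity $\lim_{y\to0}\partial_y\Delta^m U$ corresponds on the Fourier side to the top-order coefficient: from $\hat U(\xi,y)=e^{-|\xi|y}\sum c_j(\xi)y^j$ one finds $\widehat{\Delta^m U}(\xi,y)$ again of the form $e^{-|\xi|y}\sum d_j(\xi)y^j$, and $\partial_y$ at $0$ picks out a combination of $d_0$ and $d_1$; tracking constants shows $-\frac{\Gamma(m+1/2)}{m!\sqrt\pi}\lim_{y\to0}\partial_y\widehat{\Delta^m U} = |\xi|\cdot(\text{normalization})\cdot\hat f(\xi)$, so prescribing it as $\hat w(\xi)$ determines $\hat f$ up to the factor $|\xi|$, giving $\hat U(\xi,y)=\frac{1}{|\xi|}q_m(|\xi|y)e^{-|\xi|y}\hat w(\xi)$ for a suitable polynomial $q_m$. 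The last step is to identify $|\xi|^{-1}q_m(|\xi|y)e^{-|\xi|y}$ as the Fourier transform of $\tilde{\mathcal K}_{n/2}(x,y)=\tilde\kappa_n\log\frac{1}{y^2+|x|^2}$; since $\log$ is not integrable the identification is in the distributional sense — I would verify it by noting $\partial_y\tilde{\mathcal K}_{n/2}(x,y)=-2\tilde\kappa_n\, y/(y^2+|x|^2)$, whose Fourier transform is elementary, and then integrating in $y$, fixing the additive constant by the decay of $\hat U$ as $y\to\infty$ and the constant $\tilde\kappa_n = 1/(2^n\Gamma(n/2)\pi^{n/2})$ by matching with the already-computed $\kappa_n$ through the relation $(-\Delta_x)^{1/2}\tilde{\mathcal K}_{n/2}$ vs $\mathcal K_{n/2}$ up to scaling. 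As before, the only genuinely delicate point is the linear-algebra verification that the normalization constants in \eqref{problem-extension} and in \eqref{Neumann-condition} are precisely the ones that make these systems solvable with the stated kernels; everything else is standard Fourier analysis and Gamma-function identities.
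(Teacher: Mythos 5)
Your overall strategy is viable and is a genuinely different route from the paper's. The paper never works at the critical exponent directly: it treats the subcritical family $\gamma\in(0,\tfrac n2)\setminus\N$, where the tangential Fourier transform turns the (degenerate) extension equation into a Bessel ODE solved by $|\xi|^\gamma y^\gamma K_\gamma(|\xi|y)$, inverts that transform via Hankel--Bessel identities to get the kernel $\kappa_{n,\gamma}\,y^{\frac n2+\gamma}(y^2+|x|^2)^{-\frac n2-\gamma}$, and then obtains both kernels of Proposition \ref{prop:kernel} by analytic continuation $\gamma\to\frac n2$ (for the Neumann kernel, after subtracting a term $\frac{\tilde\kappa_{n,\gamma}}{\frac n2-\gamma}y^{\frac n2-\gamma}$ that blows up in the limit, which is what produces the logarithm). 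Your plan works directly at $\gamma=\frac n2$, where the ODE $(\partial_y^2-|\xi|^2)^{m+1}\hat U=0$ is elementary and the decaying solutions are $e^{-|\xi|y}$ times polynomials of degree $\le m$; this is legitimate (indeed $z^{m+\frac12}K_{m+\frac12}(z)$ equals $e^{-z}$ times a degree-$m$ polynomial, so your $p_m$ is the paper's Bessel solution in disguise), and it trades scattering theory and analytic continuation for the hand computation of the boundary conditions, which you correctly flag as the main bookkeeping.

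There is, however, a concrete error in your Neumann step. From $\hat U=p_m(|\xi|y)e^{-|\xi|y}\hat f$, each application of $(\partial_y^2-|\xi|^2)$ lowers the polynomial degree by one and contributes a factor $|\xi|^2$, so $\widehat{\Delta^m U}=c\,|\xi|^{2m}e^{-|\xi|y}\hat f$ and $\lim_{y\to0}\partial_y\widehat{\Delta^m U}=-c\,|\xi|^{2m+1}\hat f=-c\,|\xi|^{n}\hat f$: the Dirichlet-to-Neumann symbol is $|\xi|^n$, not $|\xi|$ (this is exactly the statement that the Neumann datum is $(-\Delta_x)^{n/2}f$). Hence the Neumann-to-solution multiplier is $|\xi|^{-n}p_m(|\xi|y)e^{-|\xi|y}$, not $|\xi|^{-1}q_m(|\xi|y)e^{-|\xi|y}$; for $n=3$ it is $|\xi|^{-3}(1+|\xi|y)e^{-|\xi|y}$. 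This is not cosmetic: $|\xi|^{-n}$ is not locally integrable at $\xi=0$ in $\R^n$, and that singularity --- not the growth of $\log$ at infinity --- is why the kernel is logarithmic and only determined modulo an additive constant (the paper's subtraction of the divergent $y^{\frac n2-\gamma}$ term before letting $\gamma\to\frac n2$ is precisely this renormalization). Your fallback verification, differentiating $\tilde{\mathcal K}_{\frac n2}$ in $y$ and integrating back using decay as $y\to\infty$, does work and would have exposed the discrepancy, so the gap is repairable; but as written the identification of $|\xi|^{-1}q_m(|\xi|y)e^{-|\xi|y}$ with the Fourier transform of $\tilde\kappa_n\log\frac{1}{y^2+|x|^2}$ is false, and that step of the argument fails.
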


This paper is structured as follows: in Section \ref{section:representation} we prove the two representation formulas that are needed for the proof of the main theorem, that is contained in Section \ref{section:proof}. Then, in Section \ref{section:existence} we consider the existence result from Proposition \ref{prop:existence}. Finally, Section \ref{section:kernel} is of independent interest, and it contains the proof of the representation formulas from Proposition \ref{prop:kernel} using techniques from conformal geometry. Moreover, we explain here the geometric content of \eqref{Liouville}.

\section{Representation formulas (in dimension $n=3$)}\label{section:representation}

General representation formulas for poly-harmonic functions in the upper half-space 
were given in Proposition \ref{prop:kernel}. However, these are proven using Fourier transform arguments and are well suited for energy solutions $U\in W^{m,2}(\R^{n+1}_+)$. In the following, we concentrate on biharmonic functions in dimension $n+1=4$, and move outside the energy class. More precisely, we prove the following uniqueness result (note that this is not the sharpest possible statement, but it is enough for our purposes):

\begin{prop}\label{prop:kernel3}
Given $u\in L_6(\R^3)$ (compare to Definition \ref{defLs}), the function
\begin{equation}\label{kernel-Dirichlet-representation}
U(x,y)=\int_{\mathbb R^3} {\mathcal K}_{\frac{3}{2}}(x-\tilde x,y) u(\tilde x)\,d\tilde x,
\end{equation}
with
\begin{equation*}
{\mathcal K}_{\frac{3}{2}}(x,y)=\frac{4}{\pi^2}\frac{y^3}{(y^2+|x|^2)^3}
\end{equation*}
solves the extension problem
\begin{equation}\label{problem-extension-3}\left\{
\begin{split}
&\Delta^{2}  U=0\quad\text{in }\mathbb R^{4}_+,\\
&U(\cdot,0)=u,\quad \partial_y U(\cdot,0)=0\quad\text{on }\mathbb \partial\R^4_+\simeq \R^3,
\end{split}\right.
\end{equation}
in the sense that  for every  $\varphi\in C^\infty_c(\overline{\R^4_+})$ with $\vp=\partial_y\vp=0$ on $\R^3$ we have
\begin{equation}\label{weakext}\int_{\R^4_+}U(X)\Delta^2\varphi(X)\,
 dX=-\int_{\R^3}u(x)\partial_y\Delta\varphi(x,0)\,dx.
\end{equation}
Moreover $U$ is the unique weak solution to \eqref{problem-extension-3} (i.e. \eqref{weakext}) among the functions $U\in L^1_{\loc}(\overline{\R^4_+})$ satisfying the bound
$$|U(X)|+y|\partial_y U(X)|+y^2|\D U(X)|\leq C(1+|X|^N),\quad \lim_{y\to\infty}\frac{U(x,y)}{y^2}=0\quad\forall\,x\in\R^{3},$$ for some $N\geq 1$.
\end{prop}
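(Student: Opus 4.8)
The plan is to split the statement into three parts: (a) that the Poisson integral $U$ in \eqref{kernel-Dirichlet-representation} is well-defined and biharmonic for $u\in L_6(\R^3)$, (b) that it satisfies the weak formulation \eqref{weakext} together with the claimed Neumann condition $\partial_y U(\cdot,0)=0$, and (c) the uniqueness among functions with the stated polynomial growth control. Part (a) is essentially a size estimate on the kernel: since $\mathcal K_{\frac32}(x,y)\lesssim y^3(y^2+|x|^2)^{-3}$, one has $\int_{\R^3}\mathcal K_{\frac32}(x-\tilde x,y)(1+|\tilde x|^s)\,d\tilde x<\infty$ for $s<6$ (with the borderline $s=6$ producing a logarithmic factor at most), so $U(x,y)$ converges absolutely for each fixed $y>0$; differentiation under the integral sign is justified on compact subsets of $\R^4_+$ and gives $\Delta^2 U=0$ since $X\mapsto\mathcal K_{\frac32}(x-\tilde x,y)$ is biharmonic in $\R^4_+$ for each $\tilde x$ (this can be checked directly, or inferred from Proposition~\ref{prop:kernel} with $n=3$).

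For part (b), I would first verify \eqref{weakext} by Fubini: for $\varphi\in C^\infty_c(\overline{\R^4_+})$ with $\varphi=\partial_y\varphi=0$ on $\R^3$,
\[
\int_{\R^4_+}U(X)\Delta^2\varphi(X)\,dX=\int_{\R^3}u(\tilde x)\Big(\int_{\R^4_+}\mathcal K_{\frac32}(x-\tilde x,y)\Delta^2\varphi(x,y)\,dX\Big)d\tilde x,
\]
and then integrate by parts four times in the inner integral, tracking the boundary terms on $\{y=0\}$. Because $\varphi$ and $\partial_y\varphi$ vanish on $\R^3$, and because $\mathcal K_{\frac32}$ together with its normal derivative have a definite boundary behaviour (a careful computation shows $\mathcal K_{\frac32}(\cdot,y)\to\delta_0$, $\partial_y\mathcal K_{\frac32}(\cdot,y)\to 0$ as $y\to0^+$ in the appropriate distributional sense, while $\lim_{y\to0}\partial_y\Delta\mathcal K_{\frac32}(\cdot,y)$ reproduces $-\delta_0$ up to the normalization constant), only the term $-\int_{\R^3}u(x)\partial_y\Delta\varphi(x,0)\,dx$ survives; the constant $\frac{4}{\pi^2}$ is exactly what makes it match. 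The Neumann condition $\partial_y U(\cdot,0)=0$ follows from the evenness structure of the kernel in $y$ near the boundary (equivalently, $\partial_y\mathcal K_{\frac32}(x,y)=O(y)$ as $y\to0$), which again can be read off the explicit formula.

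Part (c), uniqueness, is the main obstacle. Given two solutions, their difference $V\in L^1_{\loc}(\overline{\R^4_+})$ satisfies $\int_{\R^4_+}V\Delta^2\varphi=0$ for all admissible $\varphi$, has zero Dirichlet and Neumann data, the growth bound $|V(X)|+y|\partial_y V|+y^2|\Delta V|\le C(1+|X|^N)$, and $V(x,y)/y^2\to0$. The strategy is: first, a weak solution of $\Delta^2 V=0$ with vanishing $V$ and $\partial_y V$ on $\R^3$ extends by even reflection across $\{y=0\}$ (using $V(x,y)=V(x,-y)$) to a biharmonic function on all of $\R^4$ in the distributional sense — here one must check that the reflected function is genuinely biharmonic across the hyperplane, which is where the two boundary conditions and the weak formulation \eqref{weakext} are used. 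Then the reflected $V$ is a global biharmonic (hence smooth, by Weyl-type regularity) function on $\R^4$ with polynomial growth of order $N$, so by a Liouville theorem for polyharmonic functions it is a polynomial of degree at most $N+1$; the evenness in $y$, the vanishing of $V$ and $\partial_y V$ on $\{y=0\}$, and biharmonicity force this polynomial to be a multiple of... in fact the conditions kill everything except possibly a multiple of $y^2$ times a harmonic polynomial in $x$ — and then the decay hypothesis $V(x,y)/y^2\to0$ eliminates that too, giving $V\equiv0$. I expect the delicate point to be the justification of the even reflection at the level of distributions (ensuring no singular contribution supported on $\{y=0\}$ appears in $\Delta^2$ of the reflected function), for which the precise pair of boundary conditions built into \eqref{problem-extension-3} is essential.
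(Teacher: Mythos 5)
Parts (a) and (b) of your plan are consistent with what the paper does (the paper simply invokes Proposition \ref{prop:kernel} for $n=3$ plus approximation), although one estimate is stated incorrectly: $\int_{\R^3}\mathcal K_{\frac32}(x-\tilde x,y)(1+|\tilde x|^s)\,d\tilde x$ is finite only for $s<3$ (the integrand behaves like $y^3|\tilde x|^{s-6}$ at infinity), not for $s<6$. What you actually need for absolute convergence when $u\in L_6(\R^3)$ is the pointwise bound $\sup_{\tilde x}(1+|\tilde x|^6)\,\mathcal K_{\frac32}(x-\tilde x,y)<\infty$ for each fixed $(x,y)$ with $y>0$, which is immediate; this is easily repaired.

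The genuine gap is in part (c): the even reflection $V(x,-y):=V(x,y)$ does \emph{not} produce a biharmonic function across $\{y=0\}$, even for smooth $V$ with $V=\partial_y V=0$ on the boundary. Take $V(x,y)=y^3$ on $\R^4_+$: it is biharmonic, vanishes to second order on $\{y=0\}$, and satisfies the weak formulation \eqref{weakext} with $u\equiv 0$, but its even reflection is $|y|^3$ and $\Delta^2|y|^3=12\,\delta_{\{y=0\}}$ in $\mathcal D'(\R^4)$ — exactly the singular contribution on the hyperplane that you flag as the delicate point, and it does occur. (For the biharmonic Dirichlet problem neither odd nor even reflection works; the paper uses the Duffin/Amrouche--Raudin fifth-order reflection, which on the reflected side reads $\tilde V(x,y)=-V(x,-y)-2y(\partial_yV)(x,-y)-y^2(\Delta V)(x,-y)$ for $y<0$, and which correctly sends $y^3$ to $y^3$.) This error propagates: your classification of the resulting polynomial as ``$y^2$ times a harmonic polynomial in $x$'' relies on the (false) evenness in $y$; with the correct reflection odd powers $y^3,y^5,\dots$ survive the boundary conditions and are only eliminated by the hypothesis $\lim_{y\to\infty}U(x,y)/y^2=0$, which is how the paper concludes. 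A second, smaller issue: for $V\in L^1_{\loc}$ the traces $V(\cdot,0)$ and $\partial_yV(\cdot,0)$ are not a priori defined, so the reflection must be performed at the level of distributions by dualizing against a suitable combination of $\varphi(X)$, $\varphi(X^*)$, $y(\partial_y\varphi)(X^*)$, $y^2(\Delta\varphi)(X^*)$ chosen so that the test function restricted to $\R^4_+$ vanishes to second order on $\{y=0\}$; this is precisely how the paper encodes the boundary conditions without invoking traces. The remainder of your argument (Weyl regularity, Liouville for polyharmonic functions, killing $P_0,P_1$ by the boundary conditions and $P_i$, $i\ge2$, by the growth condition) matches the paper once the reflection is fixed.
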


\begin{proof}
That $U$ as defined in \eqref{kernel-Dirichlet-representation} solves \eqref{problem-extension-3} in the strong sense when $u\in C^\infty_c(\R^3)$ follows from \eqref{kernel-Dirichlet} for $n=3$.
The general case is proven by approximation. The uniqueness part follows at once from the following Proposition.
\end{proof}

\begin{prop}Let $U\in L^1_{\loc}(\overline{\R^4_+})$ be a solution to $$\D^2U=0\quad\text{in }\R^4_+,\quad U(\cdot,0)=\partial_y U(\cdot,0)=0\quad\text{on }\R^{3},$$ in the sense that
$$\int_{\R^4_+}U(X)\Delta^2\varphi(X)\,
 dX=0\quad \text{for every }\vp\in \mathcal{S},$$ where $$\mathcal{S}:=\{\vp\in C_c^\infty(\overline{\R^4_+}):\vp=\partial_y\vp=0\text{ on }\R^3\},$$
and $X=(x,y)\in \R^3\times [0,\infty)$.
Then $U$ has a bi-harmonic extension on $\R^4$. Moreover, if $U$ satisfies
$$|U(X)|+y|\partial_y U(X)|+y^2|\D U(X)|\leq C(1+|X|^N),\quad \lim_{y\to\infty}\frac{U(x,y)}{y^2}=0\quad\forall\,z\in\R^{3},$$ for some $N\geq 1$, then $U\equiv0$.
\end{prop}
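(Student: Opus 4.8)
The plan is to turn $U$ into a genuine biharmonic function that is smooth up to $\{y=0\}$, extend it across the boundary by an explicit reflection to a biharmonic function on all of $\R^4$, and then conclude by a Liouville-type argument. First I would record the regularity. Tested against $\varphi\in C_c^\infty(\R^4_+)\subset\mathcal S$ the hypothesis gives $\Delta^2U=0$ distributionally in the open half-space, so $U\in C^\infty(\R^4_+)$ by hypoellipticity. Near $\{y=0\}$ the weak formulation with test functions vanishing to first order is precisely the one encoding the clamped conditions $U=\partial_yU=0$, which form an elliptic boundary value problem satisfying the complementing condition; standard boundary elliptic regularity then yields $U\in C^\infty(\overline{\R^4_+})$, with $U=\partial_yU=0$ on $\{y=0\}$ classically, and with local $C^k$-estimates on half-balls in terms of the $L^1$-norm on larger half-balls. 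Combined with $|U|\le C(1+|X|^N)$, this bounds $U$ and all its derivatives polynomially on $\overline{\R^4_+}$.

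\textbf{The reflection.} The key observation is that if $U$ is biharmonic in $\R^4_+$ then so is $\Phi:=-U+2y\,\partial_yU-y^2\,\Delta U$: writing $h:=\Delta U$ (harmonic) and using $\Delta(yf)=y\Delta f+2\partial_yf$, one computes $\Delta^2(y\,\partial_yU)=4\,\partial_y^2h$ and $\Delta^2(y^2h)=8\,\partial_y^2h$, so the last two terms of $\Delta^2\Phi$ cancel and $\Delta^2\Phi=0$. I would then set
\[
\bar U(x,y):=\begin{cases}
U(x,y),& y\ge 0,\\[1mm]
\Phi(x,-y)=-U(x,-y)-2y\,\partial_yU(x,-y)-y^2\,\Delta U(x,-y),& y<0,
\end{cases}
\]
which is biharmonic off $\{y=0\}$. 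Using the Taylor expansion $U(x,y)=a_2(x)y^2+a_3(x)y^3+a_4(x)y^4+\cdots$ at $\{y=0\}$ — where $a_0=a_1\equiv0$ by the boundary conditions and $a_4=-\tfrac16\Delta_xa_2$ by biharmonicity — a short computation gives $\Phi(x,y)=a_2y^2-a_3y^3+a_4y^4+\cdots$, hence $\partial_y^k\Phi(x,0)=(-1)^k\partial_y^kU(x,0)$ for $k=0,1,2,3$. Therefore all partial derivatives of $\bar U$ of order $\le3$ match across $\{y=0\}$, so $\bar U\in C^3(\R^4)$; being $C^3$ and biharmonic off $\{y=0\}$, $\bar U$ is biharmonic in the distributional sense on $\R^4$ (the boundary terms in Green's formula cancel), hence $\bar U\in C^\infty(\R^4)$ by hypoellipticity. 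This is the desired extension. Moreover, for $y<0$,
\[
|\bar U(x,y)|\le|U(x,-y)|+2|y|\,|\partial_yU(x,-y)|+y^2\,|\Delta U(x,-y)|\le 4\,C\,(1+|X|^N),
\]
since each summand is controlled by the hypothesis applied at the point $(x,-y)$; so $\bar U$ is polynomially bounded on all of $\R^4$. (This is exactly what the weights $y|\partial_yU|$ and $y^2|\Delta U|$ in the statement are for.)

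\textbf{Liouville conclusion.} A biharmonic function on $\R^4$ with $|\bar U|\le C(1+|X|^N)$ is a polynomial: by the standard interior estimates for biharmonic functions, $|D^\beta\bar U(X_0)|\le C_\beta\,r^{-|\beta|}\sup_{B_r(X_0)}|\bar U|$ for all $r>0$, so letting $r\to\infty$ with $|\beta|=N+1$ kills every derivative of order $N+1$. Writing $\bar U(x,y)=\sum_jc_j(x)y^j$ (a finite sum, $c_j$ polynomials in $x$), the conditions $\bar U=\partial_y\bar U=0$ on $\{y=0\}$ force $c_0=c_1\equiv0$, i.e. $\bar U(x,y)=y^2P(x,y)$ with $P$ a polynomial; and for each fixed $x$, $P(x,y)=\bar U(x,y)/y^2\to0$ as $y\to+\infty$ by hypothesis, so the one-variable polynomial $P(x,\cdot)$ vanishes identically. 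Hence $P\equiv0$, so $\bar U\equiv0$ and $U\equiv0$.

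\textbf{The main obstacle.} The hard part is finding the right reflection: the naive even or odd reflections of $U$ are not biharmonic across $\{y=0\}$ — they leave a nonzero jump in the third normal derivative, governed by $a_3$ — and it is the specific combination $-U+2y\,\partial_yU-y^2\,\Delta U$ that is simultaneously biharmonic, glues to $C^3$ (so that the extension is biharmonic on $\R^4$), and has a polynomially bounded extension thanks to exactly the weighted bounds in the statement. I would also need to be careful, in the regularity step, that the bare $L^1_{\loc}$-plus-weak-formulation hypothesis genuinely forces smoothness up to the boundary — this is what rules out boundary-singular biharmonic functions such as the Neumann kernel $\log\frac1{y^2+|x|^2}$ (which, however, has nonzero, hence inadmissible, Dirichlet trace) — so the statement that the weak formulation encodes the clamped conditions should be invoked with some care. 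The Liouville step is routine.
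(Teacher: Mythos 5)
Your reflection is exactly the paper's: the extension you write on $\{y<0\}$, namely $-U(x,-y)-2y\,\partial_yU(x,-y)-y^2\,\D U(x,-y)$, is precisely the formula $\tilde U(X)=-U(X^*)-2y(\partial_yU)(X^*)-y^2(\D U)(X^*)$ that the paper obtains (following the Amrouche--Raudin reflection principle), and your Liouville endgame (polynomial growth $\Rightarrow$ polynomial, then $P_0=P_1=0$ from the boundary conditions and $P_i=0$ for $i\ge 2$ from $U=o(y^2)$) is identical. The difference is in how the extension is shown to be biharmonic across $\{y=0\}$. The paper never touches boundary regularity of $U$: it defines $\tilde U$ as a distribution by transposing the reflection onto the test function, and checks that for $\vp\in C^\infty_c(\R^4)$ the resulting combination $\Phi(X)=\vp(X)-\vp(X^*)-y^2\D\vp(X^*)-2y(\partial_y\vp)(X^*)$ lies in $\mathcal S$, so $\langle\tilde U,\D^2\vp\rangle=0$ follows directly from the hypothesis; smoothness of $U$ up to $\{y=0\}$ is then a consequence (biharmonic distributions are analytic), not a prerequisite. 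You instead run the reflection classically, which forces you to establish $U\in C^\infty(\overline{\R^4_+})$ \emph{first} in order to Taylor-expand at $\{y=0\}$ and glue in $C^3$.

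That first step is where your proposal has a real obligation that is not discharged. The hypothesis gives only $U\in L^1_{\loc}(\overline{\R^4_+})$ together with a very weak (transposed) formulation of the clamped conditions; boundary regularity for such very weak $L^1$ solutions of the biharmonic Dirichlet problem is true, but it is not the "standard" $H^2$-weak-solution elliptic regularity one usually quotes — proving it requires a duality/transposition argument with the Green function of the clamped plate on half-balls, which is essentially the same amount of work as (and morally equivalent to) the distributional reflection the paper performs. You flag this yourself at the end, which is the right instinct: as written, the regularity assertion is the one genuine gap, and the cleanest way to close it is to replace your classical gluing by the paper's distributional version of the very same reflection (define $\langle\tilde U,\vp\rangle$ by integrating $U$ against $\vp(X)-5\vp(X^*)+6y(\partial_y\vp)(X^*)-y^2(\D\vp)(X^*)$ and verify the reflected test function is admissible). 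Everything after that point in your argument — the computation that $-U+2y\partial_yU-y^2\D U$ is biharmonic, the polynomial bound on the extension from the weighted hypotheses, and the Liouville step — is correct and matches the paper.
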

\begin{proof} We shall follow \cite[Lemma 2.3]{reflection}. We define the distribution
$$\langle \tilde U,\vp\rangle:=\int_{\R^4_+}U(X)\big\{\vp(X)-5\vp(X^*)+6y(\partial_y\vp)(X^*)-y^2(\D\vp)(X^*) \big\}\,dX,\quad \vp\in C_c^\infty (\R^4),$$ where for $X=(x,y)\in\R^{3}\times\R$ we have set $X^*=(x,-y)$. We claim that $\tilde U$ is the unique bi-harmonic extension of $U$ in $\R^4$. Uniqueness follows immediately since bi-harmonic distributions are analytic.

First we show that $\tilde U$ is indeed an extension of $U$.  For every $\vp\in C_c^\infty(\R^4_+)$ we have $$\langle \tilde U,\vp\rangle=\int_{\R^4_+}U\vp \,dX,$$ and hence $U=\tilde U$ on $\R^4_+$.  Next, to show that $\tilde U$ is bi-harmonic in the weak sense, we compute for $\vp\in C_c^{\infty}(\R^4)$
\begin{align*}\langle \tilde U,\D^2\vp\rangle&=\int_{\R^4_+}U(X) \big\{\D^2\vp(X)-5\D^2\vp(X^*)+6y(\partial_y\D^2\vp)(X^*)-y^2\D^3\vp(X^*) \big\}\,dX\\ &=\int_{\R^4_+}U(X)\D^2\Phi(X)\,dX,\end{align*}
where
$$\Phi(X):=\vp(X)-\vp(X^*)-y^2\D\vp(X^*)-2y(\partial_y\vp)(X^*).$$
Notice that $\Phi|_{\R^4_+}\in\mathcal{S}$.
Therefore
$$\langle \tilde U,\D^2\vp\rangle=0\quad\text{for every }\vp\in C_c^\infty(\R^4),$$ in other words, $\tilde U$ is bi-harmonic in $\R^4$.

Now we prove the second part of the proposition.  From the definition one can show that $$\tilde U(X)=-U(X^*)-2y(\partial_y U)(X^*)-y^2(\D U)(X^*)\quad\text{for }X\in\R^4_-.$$ It follows from the growth assumptions on $U$ and its derivatives that $|\tilde U(X)|\leq C(1+|X|^N)$, and therefore, by Liouville theorem we obtain that $\tilde U$ is a polynomial, that is, we can write  $$\tilde U(X)=\sum_{i=0}^mP_i(x)y^i,$$ for some polynomials $P_i$ in $\R^{3}$. The boundary conditions $U=\partial_y U=0$ on $\R^{3}$ imply that $P_0\equiv0 \equiv P_1$. Finally, the assumption $U(x,y)=o(y^2)$ implies that $P_i\equiv 0$ for $i\geq 2$. This completes the proof. \end{proof}

We note here that formulas for higher order extension problems in the ball have been considered in \cite{Abatangelo-Jarohs-Saldana:1,Abatangelo-Jarohs-Saldana:2}.\\

Next, we give an expression for a solution to \eqref{problem-extension-3} in terms of its third-order Neumann  data $\mathcal L_{\frac{3}{2}} U$ given in a subset $\Sigma_0\subset\mathbb R^3$. Recall, from Proposition \ref{prop:kernel} and the subsequent discussion, that
  $$\frac{1}{2\pi^2}(-\D_x)^\frac 32\log\frac{1}{|x|}=\delta_0\quad\text{in }\R^3.$$
Then it is natural to define, for $w\in L^1(\Sigma_0)$,
\begin{equation}\label{def-v}
V(x,y)=\frac{1}{2\pi^2}\int_{\Sigma_0}\log\left(\frac{1}{|(x,y)-(\tilde x,0)|}\right)w(\tilde x)\,d\tilde x,\quad (x,y)\in \R^3\times\R.
\end{equation}
It is easy to see that $V\in C^\infty(\R^4\setminus \bar\Sigma_0)$ and $V$ is well defined for almost every $(x,0)\in\Sigma_0$.

\begin{defn} An open set $\Omega\subset\R^4$ shall be called \emph{admissible} if it is bounded and $\Omega\cap \R^3\Subset\Sigma_0$.
\end{defn}

\begin{lem}\label{vk}
Given $w\in L^1(\Sigma_0)\cap C^0(\Sigma_0)$  and $V$ given by \eqref{def-v} we have
\begin{itemize}
\item[i)] $\D^2 V=0$ in $\R^4\setminus\bar \Sigma_0$.
\item[ii)]  $\displaystyle \lim_{y\to0}\frac{\partial}{\partial y}V(x,y)=0$ for every $x\in \R^3$.
\item[iii)] $\mathcal L_{\frac{3}{2}} V=w$ on $\Sigma_0$.
\item[iv)] $\displaystyle\int_{\Omega}|V(x,y)|\,dxdy\leq  C(\Omega,\Sigma_0)\|w\|_{L^1(\Sigma_0)}$ for every bounded domain $\Omega\subset\R^4$.
\item[v)] $V\in C^{2,\alpha}(\Omega)$ for every admissible set $\Omega$.
\end{itemize}
\end{lem}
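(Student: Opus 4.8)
The plan is to treat the five items essentially independently, since $V$ is an explicit single-layer-type potential with a logarithmic kernel. For item (i), I would observe that the kernel $K(X,\tilde x) := \log\frac{1}{|X-(\tilde x,0)|}$ is, for each fixed $\tilde x\in\Sigma_0$, a biharmonic function of $X$ on $\R^4\setminus\{(\tilde x,0)\}$ — this is precisely the statement recalled just before the lemma, that $\frac{1}{2\pi^2}(-\Delta_x)^{3/2}\log\frac1{|x|}=\delta_0$ in $\R^3$, combined with the fact (from Proposition \ref{prop:kernel}, equation \eqref{kernel}) that $\log\frac{1}{y^2+|x|^2}$ is the Neumann kernel for the bi-Laplacian in $\R^4_+$. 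Since $\Omega':=\R^4\setminus\bar\Sigma_0$ is at positive distance from the support of the measure $w(\tilde x)\,d\tilde x$ on every compact subset, one may differentiate under the integral sign freely and conclude $\Delta^2 V=0$ there; the same dominated-convergence argument gives $V\in C^\infty(\R^4\setminus\bar\Sigma_0)$ (already asserted in the text). For item (ii), I would split $\partial_y$ of the kernel: $\partial_y\log\frac{1}{|(x,y)-(\tilde x,0)|} = -y/|(x,y)-(\tilde x,0)|^2$, which vanishes pointwise as $y\to 0$ for $x\ne\tilde x$, and is dominated uniformly near $y=0$ (for $x$ in a compact set) by an integrable function of $\tilde x$ after accounting for the singularity at $\tilde x=x$; then dominated convergence gives the limit $0$. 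A small subtlety: near a point $x\in\Sigma_0$ the integrand has a non-integrable-looking singularity, but $|y|/|(x,y)-(\tilde x,0)|^2 \le C/|x-\tilde x|$ which is integrable in $\R^3$ near $\tilde x = x$, so this is fine.

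For item (iii), the identity $\mathcal L_{3/2}V = w$ on $\Sigma_0$ is the heart of the matter and the step I expect to be the main obstacle, because $\mathcal L_{3/2}$ (defined in \eqref{ope_def} as $\tfrac12\lim_{y\to0}\partial_y\Delta V$) is a third-order boundary operator acting on a potential that is only a single-layer density over the bounded piece $\Sigma_0$, not all of $\R^3$. The cleanest route is to reduce to the already-established representation formula: by Proposition \ref{prop:kernel} (the Neumann formula, with $n=3$, $m=1$), the function $\tilde V(x,y) := \frac{1}{2\pi^2}\int_{\R^3}\log\frac{1}{|(x,y)-(\tilde x,0)|}\,\tilde w(\tilde x)\,d\tilde x$ satisfies $-\tfrac{\Gamma(3/2)}{\sqrt\pi}\lim_{y\to0}\partial_y\Delta\tilde V = \tilde w$, i.e. (after checking the normalization constants match, using $\tilde\kappa_3 = \frac{1}{2^3\Gamma(3/2)\pi^{3/2}}$ and $\kappa_n$-bookkeeping) $\mathcal L_{3/2}\tilde V = \tilde w$ whenever $\tilde w\in C_c^\infty(\R^3)$. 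Then, for $w\in L^1(\Sigma_0)\cap C^0(\Sigma_0)$, I would fix $\bar x\in\Sigma_0$, pick $\rho>0$ with $B_{2\rho}(\bar x)\Subset\Sigma_0$, write $w = w\chi_{B_{2\rho}(\bar x)} + w\chi_{\Sigma_0\setminus B_{2\rho}(\bar x)}$, note that the contribution of the second (far) piece to $V$ is smooth and biharmonic near $\bar x$ with $\mathcal L_{3/2}$ equal to $0$ there (its density vanishes on $B_{2\rho}(\bar x)$), while the near piece is the restriction of a global single-layer potential whose density is supported in $B_{2\rho}(\bar x)$; approximating that density in the appropriate topology by smooth functions and using the global Neumann formula plus local elliptic estimates gives $\mathcal L_{3/2}V(\bar x) = w(\bar x)$. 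The continuity hypothesis on $w$ is what makes the pointwise identification valid.

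Items (iv) and (v) are soft. For (iv), by Tonelli $\int_\Omega|V(x,y)|\,dX \le \frac{1}{2\pi^2}\int_{\Sigma_0}\big(\int_\Omega|\log|(x,y)-(\tilde x,0)||\,dX\big)|w(\tilde x)|\,d\tilde x$, and the inner integral is bounded uniformly in $\tilde x\in\Sigma_0$ by a constant depending only on $\Omega$ and on $\mathrm{diam}(\Sigma_0)$ (the log is locally integrable in $\R^4$ and $\Omega$ is bounded), which yields the claimed estimate with $C = C(\Omega,\Sigma_0)$. For (v), given an admissible $\Omega$ we have $\Omega\cap\R^3\Subset\Sigma_0$, so $\Delta^2 V = 0$ holds in the distributional sense on a neighborhood of $\bar\Omega$ — on $\Omega\cap\R^4_+$ and $\Omega\cap\R^4_-$ by (i)-type reasoning applied to the two half-space pieces, and across $\Sigma_0$ because $V$ is even in $y$ (the kernel depends on $y$ only through $y^2$, hence $\partial_y V(x,0)=0$ by (ii) and the jump relations for the normal derivatives of order $1$ and $3$ organize so that $\Delta^2 V$ has no singular part along $\Sigma_0$ beyond what $\mathcal L_{3/2}V = w$ prescribes); more robustly, one invokes Proposition \ref{prop:kernel3}'s weak formulation to see $V$ solves a fourth-order equation with $C^0$ right-hand side $w$ on $\Omega$. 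Interior $C^{2,\alpha}$ regularity for $\Delta^2 V = (\text{lower order, with } C^0 \text{ data})$ then follows from standard Schauder theory for the bi-Laplacian (e.g. writing $\Delta V = G$ with $\Delta G \in C^0_{\mathrm{loc}}$ gives $G\in C^{1,\alpha}_{\mathrm{loc}}$, hence $\Delta V\in C^{1,\alpha}_{\mathrm{loc}}$, hence $V\in C^{2,\alpha}_{\mathrm{loc}}$ — wait, one wants $V\in C^{2,\alpha}$, which follows from $\Delta V\in C^{0,\alpha}_{\mathrm{loc}}$, itself a consequence of $\Delta(\Delta V)\in C^0$ and elliptic estimates), carried out on the admissible set $\Omega$.
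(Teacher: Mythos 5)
Items (i), (ii) and (iv) are fine and essentially identical to the paper's proof (differentiation under the integral sign, dominated convergence, Fubini). The two places where your argument diverges are (iii) and (v), and both need repair. For (iii), the paper does not go through Proposition \ref{prop:kernel} at all: it simply differentiates under the integral sign twice to get
\begin{equation*}
\partial_y\Delta V(x,y)=\frac{2}{\pi^2}\int_{\Sigma_0}\frac{y}{(|x-\tilde x|^2+y^2)^2}\,w(\tilde x)\,d\tilde x ,
\end{equation*}
and observes that the kernel is an approximate identity on $\R^3$ of total mass $2$ (via $\int_{\R^3}(1+|z|^2)^{-2}dz=\pi^2$), so that continuity of $w$ gives $\lim_{y\to0}\partial_y\Delta V(x,y)=2w(x)$. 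Your reduction to the global Neumann formula plus smooth approximation is not wrong in spirit, but it silently requires exactly this estimate: to interchange $\lim_{y\to0}\partial_y\Delta$ with the approximation $w^{(j)}\to w$ you need $\sup_{y>0}\int_{\R^3}\frac{y}{(|x-\tilde x|^2+y^2)^2}d\tilde x<\infty$, and to identify the constant you need its exact value; ``local elliptic estimates'' do not control a boundary limit of a third derivative of a concentrating kernel. Moreover, importing the normalization from \eqref{Neumann-condition} versus \eqref{ope_def} is exactly the kind of bookkeeping that the direct computation settles and your sketch postpones. So the key idea of step (iii) is still missing from your write-up.

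Item (v) contains a genuine error. It is not true that $\Delta^2V=0$ (or is continuous) in a neighborhood of $\bar\Omega$ across $\Sigma_0$: since $\log|X|$ is a fundamental solution of $\Delta^2$ in $\R^4$, one has, as distributions, $\Delta^2V=4\,w\,\mathcal H^3\llcorner(\Sigma_0\times\{0\})$ -- equivalently, the jump $\partial_y\Delta V(x,0^{\pm})=\pm2w(x)$ that you establish in (iii) produces a singular part supported on the hyperplane. Hence the bootstrap ``$\Delta(\Delta V)\in C^0\Rightarrow\Delta V\in C^{0,\alpha}_{\loc}\Rightarrow V\in C^{2,\alpha}_{\loc}$'' fails at its first step on any admissible $\Omega$. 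The correct route, and the one the paper indicates, is to read the regularity of $\Delta V$ directly off the explicit formula \eqref{Lap_V}: on $\Omega$ admissible, $\Delta V$ is the Riesz/single-layer potential with kernel $|X-(\tilde x,0)|^{-2}$ of the density $w$, which is bounded and continuous on $\Omega\cap\R^3\Subset\Sigma_0$, and such potentials are $C^{0,\alpha}$ for every $\alpha<1$; then Schauder applied to $\Delta V=G\in C^{0,\alpha}(\Omega)$ gives $V\in C^{2,\alpha}(\Omega)$.
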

\begin{proof}
Most of the statements in this lemma are contained in the proof Proposition \ref{prop:kernel}. However, let us give a direct proof.

Since $\log |\cdot|$ is a fundamental solution for $\D^2$ in $\R^4$, we have $i)$.
Next, differentiating under the integral sign, from \eqref{def-v}, one has
$$\frac{\partial}{\partial y}V(x,y)=-\frac{1}{2\pi^2}\int_{\Sigma_0}\frac{y}{|x-\tilde x|^2+y^2}w(\tilde x)\,d\tilde x.$$
Then $ii)$ follows by dominated convergence theorem (with dominating function $\frac{w(\tilde x)}{|x-\tilde x|^2}$).

Again differentiating under the integral sign we get
\begin{equation}\label{Lap_V}
\D V(x,y)=-\frac{1}{\pi^2}\int_{\Sigma_0}\frac{1}{|x-\tilde x|^2+y^2}w(\tilde x)\,d\tilde x.
\end{equation}
This gives
$$\frac{\partial}{\partial y}\D V(x,y)=\frac{2}{\pi^2}\int_{\Sigma_0}\frac{y}{(|x-\tilde x|^2+y^2)^2}w(\tilde x)\,d\tilde x,\quad y\neq 0.$$
Now we fix $x\in\Sigma_0$. Since $w\in C^0(\Sigma_0)$, given $\ve>0$ we have  $w(\tilde x)=w(x)+o_\ve$ on $B_\epsilon(x)$, with $o_\ve\to0$ as $\ve\to0$. Therefore, since $\lim_{y\to0}\int_{\Sigma_0\setminus B_\epsilon(x)}\frac{y}{|x-\tilde x|^2+y^2}w(\tilde x)\,d\tilde x=0$,
\begin{align*}
\lim_{y\to0}\frac{\partial}{\partial y}\D V(x,y)&=\frac{2}{\pi^2}\left(w(x)+o_\ve\right)\lim_{y\to0}\int_{B_\ve(x)\subset\R^3}\frac{y}{(|x-\tilde x|^2+y^2)^2}\,d\tilde x\\
&=\frac{2}{\pi^2}\left(w(x)+o_\ve\right)\int_{\R^3}\frac{1}{(|z|^2+1)^2}\,dz\\
&\xrightarrow{\ve\to0}2w(x),
\end{align*}
and this yields $iii)$. Note that in the second last to last line we have used that
\begin{equation*}
\begin{split}
\int_{\R^3}\frac{1}{(1+|z|^2)^2}\,dz&=|\mathbb S^2|\int_0^\infty \frac{r^2}{(1+r^2)^2}\,dr=4\pi \int_0^\infty r\left(\frac{-1}{2(1+r^2)}\right)'\,dr\\
&=2\pi\int_0^\infty\frac{1}{1+r^2}\,dr=\pi^2.
\end{split}\end{equation*}
But, using Fubini's theorem,
$$\int_{\Omega}|V(x,y)|\,dxdy\leq \int_{\Sigma_0}|w(\tilde x)| \int_{\Omega}\big|\log|(x,y)-(\tilde x,0)|\big|\,dxdyd\tilde x\leq C(\Omega,\Sigma_0)\|w\|_{L^1(\Sigma_0)},$$
as desired.

Finally, regularity follows by standard arguments directly from \eqref{Lap_V}.
  \end{proof}




\section{Proof of Theorem \ref{thm1}}\label{section:proof}

Let us assume that $Q_ke^{3u_k}\in L^1(\Sigma_0)$. In the light of \eqref{def-v}, we set
\begin{align}\label{def-vk}
V_k(x,y)=\frac{1}{2\pi^2}\int_{\Sigma_0}\log\left(\frac{1}{|(x,y)-(\tilde x,0)|}\right)Q_k(\tilde x)e^{3u_k(\tilde x)}\,d\tilde x,\quad (x,y)\in \R^3\times\R,
\end{align}

We will use the following notation:
\begin{equation*}
\begin{split}
\B_R(X_0)&=\{(x,y)\in\R^{4}\,:\, |(x,y)-(x_0,y_0)|<R\},\quad X_0=(x_0,y_0)\in \R^4,\\
B_R(x_0)&=\{x\in \R^3: |x-x_0|<R\},\quad x_0\in \R^3.
\end{split}
\end{equation*}

The following lemma can be seen as a Brezis-Merle-type estimate.

\begin{lem}\label{lemmaBM} For every $K\Subset (\R^4\setminus S_1)$ there exists $p>1$ such that
\begin{align}
\int_{K\cap \Sigma_0}e^{3pV_k(x,0)}\,dx&\le C(K)\label{BM2},
\end{align}
uniformly with respect to $k$.
\end{lem}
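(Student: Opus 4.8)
The plan is to adapt the classical Brezis–Merle argument (as in \cite{Brezis-Merle}, and in the fourth-order setting of \cite{ARS}) to the logarithmic potential $V_k$ defined in \eqref{def-vk}. The starting point is that $V_k(x,0)$ is, up to the constant $\frac{1}{2\pi^2}$, the convolution of the nonnegative-measure-like datum $f_k:=Q_ke^{3u_k}$ (which has uniformly bounded $L^1(\Sigma_0)$ norm, by the volume bound and $\|Q_k\|_\infty\le C$) with the $3$-dimensional Riesz-type kernel $\log\frac{1}{|x-\tilde x|}$ coming from $(-\Delta_x)^{3/2}$. The key analytic input is a Brezis–Merle lemma for this kernel: if $f\in L^1(\Sigma_0)$ with $\|f\|_{L^1}\le \Lambda$, then $w(x):=\frac{1}{2\pi^2}\int_{\Sigma_0}\log\frac{1}{|x-\tilde x|}|f(\tilde x)|\,d\tilde x$ satisfies $\int_{\Sigma_0}e^{q|w|}\,dx\le C$ for every $q<\frac{2\pi^2\cdot 3}{\Lambda}$ — equivalently, $e^{3q'|w|}$ is integrable for $3q'\|f\|_{L^1}<2\pi^2\cdot(\text{dimensional constant})$. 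The precise threshold is dictated by the normalization $\frac{1}{2\pi^2}(-\Delta_x)^{3/2}\log\frac1{|x|}=\delta_0$ together with $\Lambda_1=4\pi^2$, which is exactly why the critical mass in the definition of $S_1$ is $\frac{\Lambda_1}{2}=2\pi^2$: below this mass, $3pV_k$ stays sub-threshold for some $p>1$.

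First I would record the pointwise decomposition: near a point $\bar x\in \R^4\setminus S_1$, by definition of $S_1$ there is a radius $\varepsilon>0$ and a subsequence along which $\int_{B_{2\varepsilon}(\bar x)}|Q_k|e^{3u_k}\,dx \le 2\pi^2-\delta_0$ for some $\delta_0>0$. Split $V_k=V_k^{(1)}+V_k^{(2)}$, where $V_k^{(1)}$ is the contribution of the integral over $B_{2\varepsilon}(\bar x)\cap\Sigma_0$ and $V_k^{(2)}$ the contribution of the rest. On $B_\varepsilon(\bar x)$ the far piece $V_k^{(2)}(x,0)$ is bounded above by $C(\varepsilon)\|f_k\|_{L^1}\le C$ because the kernel $\log\frac{1}{|x-\tilde x|}$ is bounded above (in fact uniformly bounded) when $|x-\tilde x|$ is bounded below and $\Sigma_0$ is bounded; so $e^{3pV_k^{(2)}}$ is uniformly bounded on $B_\varepsilon(\bar x)$ for any $p$. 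The near piece $V_k^{(1)}$ is handled by the Brezis–Merle lemma applied to $f_k\mathbf 1_{B_{2\varepsilon}(\bar x)}$: since its mass is $\le 2\pi^2-\delta_0$, we may pick $p>1$ with $3p(2\pi^2-\delta_0)<6\pi^2=3\cdot\Lambda_1/2$, hence $\int e^{3p|V_k^{(1)}|}\le C$. Combining via Hölder (write $e^{3pV_k}\le e^{3pV_k^{(1)}}e^{3pV_k^{(2)}}$ and use that the second factor is bounded) gives \eqref{BM2} on a ball around each point of $\R^4\setminus S_1$, and a covering argument upgrades this to arbitrary $K\Subset(\R^4\setminus S_1)$ — choosing the smallest of the finitely many exponents $p$.

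The main obstacle is proving the Brezis–Merle lemma for this specific kernel with the sharp constant, i.e. controlling $\int_{\Sigma_0}\exp\bigl(\frac{3p}{2\pi^2}\int_{\Sigma_0}\log\frac{1}{|x-\tilde x|}|f(\tilde x)|\,d\tilde x\bigr)dx$ when $\|f\|_{L^1}$ is below the critical mass. The clean route is Jensen's inequality: normalize $d\mu=|f|\,d\tilde x/\|f\|_{L^1}$ to a probability measure on $\Sigma_0$, so that by convexity of $\exp$,
\[
\exp\!\Bigl(\tfrac{3p\|f\|_{L^1}}{2\pi^2}\int \log\tfrac{1}{|x-\tilde x|}\,d\mu\Bigr)\le \int \Bigl(\tfrac{1}{|x-\tilde x|}\Bigr)^{3p\|f\|_{L^1}/(2\pi^2)}d\mu(\tilde x),
\]
then integrate in $x$ over $\Sigma_0$, apply Fubini, and observe that $\int_{\Sigma_0}|x-\tilde x|^{-s}\,dx<\infty$ uniformly in $\tilde x$ precisely when $s<3$ (as $\Sigma_0\subset\R^3$ is bounded). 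This is finite iff $3p\|f\|_{L^1}/(2\pi^2)<3$, i.e. $p\|f\|_{L^1}<2\pi^2$, which is exactly the sub-critical condition and explains the appearance of $\Lambda_1/2=2\pi^2$. One technical point deserving care is that the kernel $\log\frac{1}{|x-\tilde x|}$ changes sign on $\Sigma_0$, so I would first split it as $\log\frac{1}{|x-\tilde x|}=\log\frac{1}{|x-\tilde x|}\mathbf 1_{\{|x-\tilde x|<1\}}+\log\frac{1}{|x-\tilde x|}\mathbf 1_{\{|x-\tilde x|\ge 1\}}$; the second term is bounded below by $-\log(\mathrm{diam}\,\Sigma_0)$ and contributes only a harmless bounded factor, while the first is nonnegative and to it the Jensen argument applies verbatim. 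A second subtlety: we also need the \emph{upper} bound on $V_k$, not just $|V_k|$, but since $e^{3pV_k}\le e^{3p|V_k|}$ this is automatic. Finally, the uniform $L^\infty$ bound on $Q_k$ is only used to pass between the hypothesis bound on $e^{3u_k}$ (volume) and the $L^1$ bound on $f_k=Q_ke^{3u_k}$ that actually enters the estimate.
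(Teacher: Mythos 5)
Your proposal is correct and follows essentially the same route as the paper's proof: a near/far splitting of the logarithmic potential around each point of $K$, a uniform bound on the far part, Jensen's inequality with the normalized measure $d\mu_k=|Q_k|e^{3u_k}\,d\tilde x/\alpha_k$ on the near part so that the resulting exponent $3p\alpha_k/(2\pi^2)$ stays below $3$ (which is exactly where the threshold $\Lambda_1/2=2\pi^2$ enters), and a finite covering argument. The only cosmetic difference is that you package the Jensen step as a standalone Brezis--Merle lemma for the kernel $\log\frac{1}{|x-\tilde x|}$, whereas the paper carries it out inline.
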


\begin{proof} For every $X\in K$ we can find $R_X\le \tfrac{1}{4}$ such that
$$\liminf_{k\to\infty}\int_{\mathbb B_{2R_X}(X)\cap \Sigma_0} {|Q_k|} e^{3u_k}\,dx <\frac{\Lambda_1}{2}.$$
By compactness we can extract a finite covering, i.e. points $X_1,\dots, X_M$ such that for $R_j:=R_{X_j}$,
$$K\subset\bigcup_{j=1}^M \mathbb B_{R_j}(X_j)$$
and up to extracting a subsequence we can assume that
$$\limsup_{k\to\infty}\int_{\mathbb B_{2R_j}(X_j)\cap \Sigma_0} |Q_k|e^{3u_k}\,dx <\frac{\Lambda_1}{2}(1-\delta),$$
for some $\delta =\delta(K)>0$.
Fix $j\in\{1,...,M\}$. For $X\in \mathbb B_{R_j}(X_j)$ we bound
\begin{align*}
V_k(X) &=\frac{1}{2\pi^2}\int_{\mathbb B_{2R_j}(X_j)\cap \Sigma_0}\log\left(\frac{1}{|X-(\tilde x,0)|}\right)Q_k(\tilde x)e^{3u_k(\tilde x)}\,d\tilde x\\
&\quad +\frac{1}{2\pi^2}\int_{\Sigma_0\setminus \mathbb B_{2R_j}(X_j)}\log\left(\frac{1}{|X-(\tilde x,0)|}\right)Q_k(\tilde x)e^{3u_k(\tilde x)}\,d\tilde x\\
&{=} (I)_j+(II)_j.
\end{align*}
Observe that
$$|(II)_j|\le C|\log R_j| \|Q_ke^{3u_k}\|_{L^1(\Sigma_0)}\le C_j(K).$$
Assuming that
$$\alpha_k:=\|Q_ke^{3u_k}\|_{L^1(\mathbb B_{2R_j}(X_j)\cap \Sigma_0)}>0$$
(otherwise $(I)_j=0$), we can use Jensen's inequality with
$$d\mu_k(\tilde x)= \frac{|Q_k(\tilde x)|e^{3u_k(\tilde x)}}{\alpha_k}\, d\tilde x,$$
and using that for $k$ large enough
$$\frac{\alpha_k}{2\pi^2}\le (1-\delta),$$
we get for $p<\frac{1}{1-\delta}$
\begin{align*}
\int_{\mathbb B_{R_j}(X_j)\cap \Sigma_0} &e^{3pV_k(x)}\,dx\\
&\le \tilde C_j \int_{\mathbb B_{R_j}(X_j)\cap \Sigma_0}\exp\left\{\frac{3p \alpha_k}{2\pi^2}\int_{\mathbb B_{2R_j}(X_j)\cap \Sigma_0}\log\left(\frac{1}{|X-(\tilde x,0)|}\right)d\mu(\tilde x)\right\} \,dx\\
&\le \tilde C_j \int_{\mathbb B_{R_j}(X_j)} \int_{\mathbb B_{2R_j}(X_j)\cap \Sigma_0} \frac{1}{|X-(\tilde x,0)|^{3p(1-\delta)}}\, d\mu(\tilde x)\, dx\\
&\le C'_j,
\end{align*}
with $C_j'$ depending on $p$, hence on $K$. Summing over $j$ we conclude the proof of \eqref{BM2}.
\end{proof}

Now we are ready for the proof of Theorem \ref{thm1}. First recall the definition of $V_k$ from \eqref{def-vk} and
 set $h_k:=u_k-v_k$ where $u_k:=U_k|_{\R^3}$ and  $v_k:=V_k|_{\R^3}$. We also define $H_k$ by the Poisson representation formula \eqref{kernel-Dirichlet-representation}
 $$H_k(X):=\int_{\R^3}\mathcal K_{\frac{3}{2}}(x-\tilde x,y)h_k(\tilde x)\,d\tilde x, \quad X=(x,y)\in \R^4_+.$$
Notice that, thanks to Proposition \ref{prop:kernel3}, we have $U_k=V_k+H_k$. We now extend $H_k$ on $\R^4_-$ by setting $H_k(x,y):=H_k(x,-y)$. From Lemma  \ref{vk}, Proposition \ref{prop:kernel} and \eqref{eq-1ext} it follows that
 \begin{align*}\label{Hk}
\lim_{y\to0}\frac{\partial}{\partial y}H_k(x,y)=0=\lim_{y\to0}\frac{\partial}{\partial y}\D H_k(x,y) \quad\text{for every }x \in\Sigma_0.
\end{align*}
Therefore,
$$\D^2 H_k=0\quad\text{in }\R^4 \setminus (\Sigma_0^c\times\{0\}),
$$
or, equivalently, $\Delta^2 H_k=0$ in $\Omega$ for every admissible $\Omega\subset\R^4$.
 Since  $$\int_{\R^3}\frac{|v_k(x)|}{1+|x|^6}\,dx\leq C\int_{\Sigma_0}|Q_k(\tilde x)|e^{3u_k(\tilde x)}\int_{\R^3}\frac{\left|\log|x-\tilde x|\right|}{1+|x|^6}\,dxd\tilde x\leq C,$$
 we have
 $$\int_{\R^3}\frac{h_k^+}{1+|x|^6}\,dx\leq \int_{\R^3}\frac{u_k^++|v_k|}{1+|x|^6}\,dx\leq C,$$ thanks to assumption \eqref{extra-assumption}. 
 This implies that,  for $\Omega\Subset\R^4$,
\begin{equation}\label{boundH+}
\int_{\Omega} H_k^+\,dxdy\leq \int_{\R^3}\frac{h_k^+(\tilde x)}{1+|\tilde x|^6}\int_{\Omega}\frac{y^3(1+|\tilde x|^6)}{(|x-\tilde x|^2+y^2)^3}\,dxd\tilde xdy\leq C(\Omega).
\end{equation}

For a given $X_0=(x_0,0)\in \Sigma_0$  we let $R_0>0$ be such that $B_{2R_0}(x_0)\subset \Sigma_0$ and set $$\beta_k=\int_{\B_{R_0}(X_0)}|H_k|\,dxdy.$$

\textbf{Case 1:}  $\beta_k\not\to\infty$. Then, we claim that up to a subsequence,
\begin{equation}\label{convHk}
H_k\to H \quad \text{in }C^\ell(\Omega) \text{ for every }\ell\ge 0, \, \Omega\text{ admissible},
\end{equation}

Indeed, up to a subsequence, $\beta_k\le C$ so that, by elliptic estimates, up to extracting a further subsequence we get $H_k\to H$ in $C^\ell_{\loc}(\B_{R_0}(X_0))$ for every $\ell\ge 0$ and
for a smooth function $H$.
Consider now  $X_1\in \B_{R_0}(X_0)$ and any $R_1>0$ such that $\B_{R_1}(X_1)$ is admissible. Applying Pizzetti formula \eqref{piz} we have
 $$\frac{1}{|\B_{R_1}(X_1)|}\int_{\B_{R_1}(X_1)}H_k(X)\,dX= H_k(X_1)+\frac{R_1^2}{12}\D H_k(X_1).$$
Then, since $|H_k(X_1)|\le C$ and $|\Delta H_k(X_1)|\le C$, the integral on the left-hand side is bounded, and considering \eqref{boundH+} we obtain
$$\int_{\B_{R_1}(X_1)}|H_k|\,dX\le C$$
with a constant depending on $\B_{R_1}(X_1)$. Again by elliptic estimates we have, up to a subsequence, $H_k\to H$ in $C^\ell_{\loc}(\B_{R_1}(X_1))$ for $\ell\ge 0$. Now, in order to prove \eqref{convHk} it suffices to cover the compact set $\overline \Omega$ with a finite number of balls $\B_{R_i}(X_i)$, $i=0,\dots, N$, such that $X_i\in \B_{R_{i-1}}(X_{i-1})$ and use induction to prove that $H_k$ converges in $C^\ell(B_{R_i}(X_i))$ for every $0\le i\le N$ and $\ell\ge 0$.

We now prove that, up to a subsequence, $U_k\to U_\infty$ in $C^{2,\alpha}_{\loc}(\R^4_+\cup(\Sigma_0 \setminus S_1))$ for a function $U_\infty\in C^{2,\alpha}(\R^4_+)$. To show this, consider a point $x_0\in \Sigma_0\setminus S_1$. By the previous discussion and by Lemma \ref{lemmaBM} we have that for $r>0$ sufficiently small
$e^{3u_k}=e^{3v_k}e^{3h_k}$ is uniformly bounded in $L^p(B_r(x_0))$ for some $p>1$. Inserting this into \eqref{def-vk} and taking into account the bound $\|Q_k\|_{L^\infty}\le C$ gives the bound
$$\|v_k\|_{L^\infty(B_\frac{r}{2}(x_0))}\le C,$$
with $C$ independent of $k$.
This in turn implies that $\|u_k\|_{L^\infty(B_\frac{r}{2}(x_0))}\le C.$ By a covering argument, we have proven that $u_k$ is locally uniformly bounded in $\Sigma_0\setminus S_1$.
Inserting this information into \eqref{def-vk} we finally get uniform bounds of the form
\begin{equation}\label{VkC2alpha}
\|V_k\|_{C^{2,\alpha}(\Omega)}\le C(\alpha,\Omega),\quad \alpha\in [0,1),\,\Omega\text{ admissible}, \,S_1\cap \Omega=\emptyset,
\end{equation}
hence by Ascoli's theorem, up to a subsequence, $U_k=V_k+H_k$ converges in $C^{2,\alpha}_{\loc}(\R^4_+\cup(\Sigma_0 \setminus S_1))$.

\medskip

\textbf{Case 2:} If $\beta_k\to\infty$, recalling \eqref{boundH+}, we must have
\begin{align*} \beta_k&\lesssim O(1)+\int_{\B_{R_0}(X_0)}\int_{\R^3}\mathcal{K}(x-\tilde x,y)h_k^-(\tilde x)\,d\tilde xdxdy\\&=O(1)+\frac{4}{\pi^2}\int_{\R^3}h_k^-(\tilde x)\int_{\B_{R_0}(X_0)}\frac{y^3}{(y^2+|x-\tilde x|^2)^3}\,dxdyd\tilde x\\&\approx O(1)+\int_{\R^3}\frac{h_k^-(\tilde x)}{1+|\tilde x|^6}\,d\tilde x\\&\approx \int_{\R^3}\frac{h_k^-(\tilde x)}{1+|\tilde x|^6}\,d\tilde x,
\end{align*}
where for positive sequences $(a_k)$ and $(b_k)$ the notation $a_k\approx b_k$ means $\frac{b_k}{C}\le a_k \le C b_k$ and $a_k\lesssim b_k$ means $a_k\le C b_k$ for a constant $C>0$ not depending on $k$.
Also note that \begin{align*}\beta_k\gtrsim -\int_{\B_{R_0}(X_0)}H_k\,dX&= \int_{\B_{R_0}(X_0)}\int_{\R^3}\mathcal{K}(x-\tilde x,y)(h_k^-(\tilde x)-h_k^+(\tilde x))\,d\tilde xdxdy\\ &=O(1)+\int_{\B_{R_0}(X_0)}\int_{\R^3}\mathcal{K}(x-\tilde x,y)h_k^-(\tilde x)\,d\tilde xdxdy.\end{align*}
Thus
\begin{align}\label{beta-relation}\beta_k\approx \int_{\R^3}\frac{h_k^-}{1+|x|^6}\,dx\approx \int_{\R^3}\frac{h_k}{1+|x|^6}\,dx.\end{align}

By elliptic estimates we have (up to a subsequence) $\frac{H_k}{\beta_k}\to\Phi$ in $C^\ell_{loc}(\B_R(X_0)),$ and using Pizzetti's formula again together with a covering as in Case 1, we also get
$$\frac{H_k}{\beta_k}\to\Phi\quad\text{in }C^\ell(\Omega),\quad \ell\ge 0,$$
for every $\Omega$ admissible, where the bi-harmonic function $\Phi$ satisfies
$$\Phi\leq 0,\quad \int_{\B_R(X_0)}|\Phi|\,dX=1,\quad \partial_y\Phi=\partial_y\D\Phi=0\text{ on }\Sigma_0.$$
We claim that the set $S_\Phi:=\{x\in \Sigma_0:\Phi(x)=0\}$ has empty interior in $\R^3$. Indeed, assume by contradiction that $B_{2\ve}(\xi)\subset S_\Phi$ for some  $\ve>0$ and $\xi\in \Sigma_0$.  Then $\D_x \Phi=0$ on $B_{\ve}(\xi)$. Since $\D^2\Phi=0$, $\Phi\le 0$ and $\Phi\not\equiv 0$, Pizzetti's formula implies that $\partial ^2_{yy}\Phi(\xi,0)<0$ on $S_\Phi$.
In particular, for $r$ small enough
\begin{equation}\label{Phir3}
\int_0^r \Phi(\xi,y)\,dy= \frac{\partial^2_{yy}\Phi(\xi,0)}{6}r^3(1+o(1)),\quad \text{as }r\to 0.
\end{equation}
We write $$H_k(\xi,y) =I_1+I_2, $$ where
$$I_i(y):=\int_{A_i}\mathcal{K}_\frac32(\xi- x,y)h_k( x)\,dx,\quad A_1:=B_\ve(\xi),\, A_2:=\R^3\setminus A_1.$$
Since $h_k=o(\beta_k) $ uniformly on $A_1$ and $\mathcal{K}_{\frac32}\in L^1(\R^4_+)$, we have
$$\int_0^r I_1(y)\,dy=o(\beta_k)\int_0^r\int_{A_1}\mathcal{K}_\frac32 (\xi-x,y)\,dxdy=o(\beta_k).$$ For $x\in A_2$ and $r\leq\ve $ we obtain
\begin{align*}\int_0^r\mathcal{K}_\frac32(\xi-x,y)\,dy&\approx \int_0^r\frac{y^3}{(a^2+y^2)^3}\,dy,\quad a:=|\xi-x|\in [\ve,\infty) \\ &=  \frac{1}{a^2}\int_0^{\frac{r}{a} }\frac{t^3}{(1+t^2)^3}\,dt,\quad\quad  r\leq \ve\leq a,\quad y\mapsto at \\ &\approx \frac{r^4}{a^6}\approx \frac{r^4}{1+|x|^6}.\end{align*}
Therefore, for $r\leq \ve$
$$\int_0^rI_2(y)\,dy\approx r^4\int_{A_2}\frac{h_k(x)}{1+|x|^6}\,dx=O(r^4\beta_k)$$
and thus, for $r\leq\ve $
$$\int_0^r\frac{H_k(\xi,y)}{\beta_k}\,dy=O(r^4), \quad\text{which yields} \quad \int_0^r\Phi(\xi,y)\,dy=O(r^4),$$ contradicting \eqref{Phir3}.


Therefore we have proven that $\Phi|_{\Sigma_0}\not\equiv 0$. Since $\tilde S_\Phi:=\{X\in \R^4:\Phi(X)=0\}$ is analytic, it follows that $S_\Phi=\tilde S_\Phi\cap \Sigma_0$ has Hausdorff dimension at most $2$.

To conclude, we observe that for $\Omega$ admissible and $K\Subset \Omega\setminus \tilde S_\Phi$, we have $\Phi<0$ in $K$, hence $H_k = \beta_k(\Phi+o(1))\to -\infty$ in $K$. In particular $H_k\to-\infty$ locally uniformly on $\Omega\setminus \tilde S_\Phi$ for every $\Omega$ admissible. Writing $e^{3u_k}=e^{3v_k}e^{3h_k}$, one can apply \eqref{def-vk} and Lemma \ref{lemmaBM} as in the last paragraph of Case 1 to prove \eqref{VkC2alpha}. Then we have for $K\Subset \Omega\setminus (S_\Phi\cup S_1)$, $\Omega$ admissible,
$$\frac{U_k}{\beta_k}=\frac{H_k}{\beta_k}+\frac{V_k}{\beta_k}=\Phi+o(1)$$
with $o(1)\to 0$ uniformly on $K$, and we are done. \qed

\begin{rem}It follows from \eqref{beta-relation} that the function $\Phi$  is strictly negative on $\R^4_+$.  Indeed, for a fixed $X_0=(x_0,y_0)\in\R^4_+$,  $$\Phi(X_0)=\lim_{k\to\infty}\frac{1}{\beta_k}\int_{\R^3}\K(x_0-\tilde x,y_0)h_k(\tilde x)\,d\tilde x\approx \lim_{k\to\infty} \frac{-1}{\beta_k}\int_{\R^3}\frac{h_k^-(\tilde x)}{1+|\tilde x|^6}\,d\tilde x\approx -1.$$\end{rem}

\section{Proof of Proposition \ref{prop:existence}}\label{section:existence}

Here we prove the existence result of Proposition \ref{prop:existence}. Given $\Phi\in \mathcal{K}(\Sigma_0)$ set $\phi:=\Phi|_{\Sigma_0}$. We shall first look for solutions $(u_k)$ to
$$(-\D_x)^\frac32u_k=Q_ke^{3u_k}\quad \text{in }\Sigma_0$$
of the form $u_k=v_k+k\phi+c_k$, for some $(v_k)$  bounded in $C^0_{loc}$ and real numbers $c_k=o(k)$.

Since  $\phi<0$ a.e. in $\Sigma_0$, we get $$\lambda_k:=\int_{\Sigma_0}e^{6k\phi}\,dx\xrightarrow{k\to\infty}0.$$  For $\ve>0$ (to be chosen later, independent of $k$) we set $$c_k:=\frac16\log\frac{\ve}{\lambda_k}\quad\text{if }S_\Phi\neq\emptyset\quad\text{and }c_k:=1\quad\text{if }S_\Phi=\emptyset.$$  We claim that $c_k=o(k)$ as $k\to\infty$. When $S_\Phi=\emptyset$ and $c_k=1$ this is obvious. In order to prove the claim also in the case $S_\Phi\ne\emptyset$ we assume, by contradiction, that $c_k\geq 2\delta k$ for some $\delta>0$ and for $k$ large. Then we have $$\ve=\int_{\Sigma_0}e^{6k\phi+6c_k}\,dx\geq \int_{\Sigma_0}e^{6k(\phi+2\delta)}\,dx\geq e^{6k\delta}|\{x\in\Sigma_0:\phi(x)\geq-\delta\}|\to\infty,$$ a contradiction.   

We define $T=T_{\ve,k}:C^0(\bar \Sigma_0)\to \bar \Sigma_0$,  that maps $v\mapsto \bar v$ where
we have set
\begin{align*}\label{barv}
\bar v(x)=\frac{1}{2\pi^2}\int_{\Sigma_0}\log\left(\frac{1}{|x-\tilde x|}\right)Q_k(\tilde x)e^{3(k\phi(\tilde x)+c_k)}e^{3v(\tilde x)}\,d\tilde x,\quad x\in \R^3.
\end{align*}
It follows easily that $T$ is compact. By H\"older's inequality we can now fix $\ve>0$ small enough such that
$$\|T(v)\|_{C^0(\Sigma_0)}\leq  1\quad\text{for  }\|v\|_{C^0(\Sigma_0)}\leq 1.$$
Then Schauder's fixed point theorem implies that $T_{\ve,k}$ has a fixed point, which we call $v_k$. Note that $v_k$ is defined on $\R^3$ and it satisfies
$$(-\D_x)^\frac32 v_k=Q_ke^{3(v_k+k\phi+c_k)}\quad \text{on }\Sigma_0.$$
Let $U_k$ be the extension of $u_k:=v_k+k\phi+c_k$ on $\R^4_+$ using the Poisson formula \eqref{kernel-Dirichlet-representation}. Since $\Phi$ is representable, we can rewrite $U_k$ as   $$U_k=\bar U_k+k\Phi$$ where $\bar U_k$ is the extension of $v_k+c_k$    using the Poisson formula \eqref{kernel-Dirichlet-representation}. It follows that $$\int_{\B_R}|\bar U_k|\, dX=O(c_k)=o(k),$$  and (as in the previous section) $\frac{U_k}{\beta_k}\to\Phi$ with $\beta_k:=k$. 
\qed

\begin{rem}The easiest example of functions $\Phi\in \mathcal{K}(\Sigma_0)$ with $S_\Phi\neq \emptyset$ are polynomials. For instance
$$\Phi(x_1,x_2,x_3,y)=-a_1x_1^2-a_2x_2^2-a_3x_3^2,\quad a_1,a_2,a_3\ge 0,$$
or similar polynomials obtained via translations and rotations.
\end{rem}



\section{Representation formulas and conformal geometry}\label{section:kernel}

In our last section we prove Proposition \ref{prop:kernel}. In addition, we explain the relation to the non-local operator $(-\Delta_x)^{\frac{n}{2}}$ on  $\mathbb R^n$, for $n$ odd. Our ideas come from conformal geometry and could be easily generalized to the curved setting (although there would not be explicit formulas in general).

We remark  that the proofs here involve Fourier transform and are well suited for energy solutions. Since it is not our objective to develop the whole formulation in Sobolev spaces,  but to simply write an explicit representation formula, we assume enough regularity for the statements below. A more precise statement (together with a uniqueness result) was already given in Section \ref{section:representation} for dimension $n=3$. In general, we set $\frac{n}{2}=m+\frac{1}{2}$ for $m\in \mathbb N$, $m\geq 1$.  We note that we are working in a critical dimension and the kernels \eqref{kernel-D} and \eqref{kernel} need to be calculated via analytic continuation as it will be explained below.

Let us first introduce the characterization of the fractional Laplacian $(-\Delta_x)^\gamma$ on $\mathbb R^n$ as a Dirichlet-to-Neumann operator for a higher order extension problem in a non-critical dimension $n>2\gamma$.
We will use the notation $\Delta_b=\Delta_{x,y}+\frac{b}{y}\partial_y$, $b\in\mathbb R$.

\begin{prop}[\cite{Chang-Yang,Case-Chang,Ray}]\label{prop:relation1}
Let $\gamma\in(0,\frac{n}{2})$ be some non-integer. Let also $m <\gamma<m+1$, this is, $m=[\gamma]$, and set
$b(\gamma)=2m + 1-2\gamma$. Assume that $U\in W^{m+1,2}(\mathbb R^{n+1}_+,y^b)$ satisfies the equation
\begin{equation}\label{problem-extension-Ray}\left\{
\begin{split}
&\Delta^{m+1}_b  U=0\quad\text{in }\mathbb R^{n+1}_+,\\
&U=f\quad\text{on }\mathbb R^n,\\
\end{split}\right.
\end{equation}
where $f$ is some function in $H^\gamma(\mathbb R^n)$ and furthermore, that for every positive odd integer $2j + 1 < m + 1$, we have
\begin{equation*}
\lim_{y\to 0} y^b \partial^{2j+1}_y U (x, 0) = 0,
\end{equation*}
and for even integers $2j<m+1$,
\begin{equation*}
\partial^{2j}_y U  (x, 0)  = (\Delta_x)^j U(x, 0)\prod_{l=1}^j
\frac{1}{2\gamma-4(l-1)}.
\end{equation*}
Then we have that
\begin{equation}\label{DtN}
(-\Delta_x)^\gamma f=\tilde{d}_\gamma \lim_{y\to 0}y^b\partial_y \Delta_b^m U(x,y)=:w,
\end{equation}
where we have defined the constant
$$\tilde{d}_\gamma=
\frac{2^{2\gamma}\Gamma(\gamma-m)}{\gamma\, 2^{2m+1}m!\,\Gamma(-\gamma)}.$$
\end{prop}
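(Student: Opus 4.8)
\textbf{Proof proposal for Proposition \ref{prop:relation1} (the Dirichlet-to-Neumann characterization).}

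The plan is to reduce the statement to a computation in the single radial variable $y$ after taking a Fourier transform in $x\in\R^n$. First I would apply $\widehat{\;\cdot\;}$ in the $x$-variables to the equation $\Delta_b^{m+1}U=0$; writing $\xi=|\xi|$ for the dual variable and $V(\xi,y):=\widehat U(\xi,y)$, the operator $\Delta_b=\Delta_x+\partial_y^2+\frac{b}{y}\partial_y$ becomes $L_b:=-|\xi|^2+\partial_y^2+\frac{b}{y}\partial_y$, so that $L_b^{m+1}V=0$. The key observation is that the rescaled function $t\mapsto V(\xi,t/|\xi|)$ solves a fixed ODE in $t$ (a power of the modified Bessel operator), so $V(\xi,y)=\widehat f(\xi)\,\varphi(|\xi|y)$ for a universal profile $\varphi$ determined by $\varphi(0)=1$, the boundary conditions on $\partial_y^{2j}$ and $\partial_y^{2j+1}$, and the requirement $U\in W^{m+1,2}(\R^{n+1}_+,y^b)$ (which selects decay as $y\to\infty$). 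Concretely $\varphi$ is built from the modified Bessel function $K_\nu$; one has $\varphi(s)=c_\nu s^\nu K_\nu(s)$ with $\nu=\gamma-m\in(0,1)$ and $c_\nu$ chosen so $\varphi(0)=1$, using $s^\nu K_\nu(s)\to 2^{\nu-1}\Gamma(\nu)$ as $s\to 0$.

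Next I would verify that this $\varphi$ indeed satisfies the prescribed Dirichlet/Neumann boundary conditions at $y=0$ — the even conditions $\partial_y^{2j}U(x,0)=(\Delta_x)^jU(x,0)\prod_{l=1}^j(2\gamma-4(l-1))^{-1}$ translate, under Fourier transform, into the Taylor coefficients of $\varphi$ at $0$: one needs $\varphi^{(2j)}(0)=|\xi|^{2j}\prod_{l=1}^j(2\gamma-4(l-1))^{-1}$ times the appropriate combinatorial factor, and the odd conditions $\lim_{y\to0}y^b\partial_y^{2j+1}U=0$ follow from the structure $\varphi(s)=a(s^2)+s^{2\nu}b(s^2)$ with $a,b$ even analytic (the analytic branch contributes only even powers, and the $y^b=y^{1-2\nu}$ weight kills the singular branch's contribution for the relevant $j$). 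This is standard for the Caffarelli–Silvestre-type extension and its polyharmonic generalization; I would cite \cite{Case-Chang,Ray} for the bookkeeping rather than redo it in full.

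The heart of the argument is then the Neumann side: compute $\lim_{y\to0}y^b\partial_y\Delta_b^m U$. On the Fourier side $\Delta_b^m$ corresponds to $L_b^m$, and applying $L_b^m$ to $\widehat f(\xi)\varphi(|\xi|y)$ produces $|\xi|^{2m}\widehat f(\xi)\,\psi(|\xi|y)$ where $\psi=(\partial_s^2+\frac{b}{s}\partial_s-1)^m\varphi$; since $\varphi$ solves $L_1^{m+1}$-type equation, $\psi$ solves the first-order-reduced Bessel equation and is again (a multiple of) $s^\nu K_\nu(s)$-type, so that near $s=0$ one has $\psi(s)=A+B s^{2\nu}+\dots$ with only the $s^{2\nu}$ term surviving differentiation and multiplication by $s^b=s^{1-2\nu}$: precisely, $\lim_{s\to0}s^{b}\partial_s\bigl(s^{2\nu}\bigr)=2\nu$. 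Tracking the constant $B$ through the normalization $\varphi(0)=1$ gives $\lim_{y\to0}y^b\partial_y\Delta_b^mU=-\,|\xi|^{2\gamma}\widehat f(\xi)\big/\tilde d_\gamma$ with $\tilde d_\gamma$ exactly the stated constant, which after inverse Fourier transform is $-\tilde d_\gamma^{-1}(-\Delta_x)^\gamma f$, i.e. \eqref{DtN}. The main obstacle — and the only genuinely delicate point — is pinning down the constant $\tilde d_\gamma=\frac{2^{2\gamma}\Gamma(\gamma-m)}{\gamma\,2^{2m+1}m!\,\Gamma(-\gamma)}$: this requires carefully propagating the Bessel-function normalization constants through $m$ applications of the reduced operator and using the reflection/duplication identities for $\Gamma$ to recognize the product $\prod_{l=1}^{m}(2\gamma-4(l-1))$ and the factor $\Gamma(\gamma-m)/\Gamma(-\gamma)$. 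I would organize this as a short lemma on the asymptotics of $s^\nu K_\nu(s)$ at $0$ and $\infty$, then let the constant emerge; everything else is routine once the profile $\varphi$ is identified.
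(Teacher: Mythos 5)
The paper does not prove this proposition: it is quoted from \cite{Chang-Yang,Case-Chang,Ray}, and the closest the paper comes is the second-order Bessel reduction of Lemma \ref{Bessel_cor} and the kernel computation in Proposition \ref{prop:Poisson}. Your Fourier--Bessel strategy is indeed the right one and matches what those references do, but there is a genuine error in the key step: the identification of the profile. You claim $\widehat U(\xi,y)=\widehat f(\xi)\,\varphi(|\xi|y)$ with $\varphi(s)=c_\nu s^{\nu}K_{\nu}(s)$ and $\nu=\gamma-m\in(0,1)$. Since $b=2m+1-2\gamma=1-2\nu$, this $\varphi$ satisfies the \emph{second-order} equation $\varphi''+\frac{b}{s}\varphi'-\varphi=0$ exactly, i.e.\ $L_b\varphi=0$. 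Consequently $\psi=L_b^m\varphi\equiv 0$ for every $m\ge 1$, the Neumann limit in \eqref{DtN} vanishes identically, and your own later assertion that ``$\psi$ is again a multiple of $s^\nu K_\nu$'' is inconsistent with your choice of $\varphi$. (For $m\ge 2$ the profile $s^{\gamma-m}K_{\gamma-m}(s)$ also fails the even-order boundary conditions: its expansion at $0$ is $a_0+b_0s^{2\nu}+\dots$ with $2\nu<2$, so $\varphi''(0)$ does not exist.) The error is invisible only in the Caffarelli--Silvestre case $m=0$, which is not the case of interest here.

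The correct profile carries the index $\gamma$, not $\gamma-m$: $\varphi(s)=\frac{2^{1-\gamma}}{\Gamma(\gamma)}\,s^{\gamma}K_{\gamma}(s)$, exactly as in the paper's formula \eqref{hat-U} and consistent with the Poisson kernel \eqref{kernel-D}. This $\varphi$ solves $\varphi''+\frac{1-2\gamma}{s}\varphi'-\varphi=0$, hence $L_b\varphi=\frac{2m}{s}\varphi'=-2m\,s^{\gamma-1}K_{\gamma-1}(s)$ by the identity $\frac{d}{ds}\bigl(s^{\mu}K_{\mu}(s)\bigr)=-s^{\mu}K_{\mu-1}(s)$, and by induction
\begin{equation*}
L_b^{m}\varphi=(-2)^{m}m!\,\tfrac{2^{1-\gamma}}{\Gamma(\gamma)}\,s^{\gamma-m}K_{\gamma-m}(s),
\end{equation*}
which is nonzero and has the expansion $A+Bs^{2\nu}+O(s^2)$ whose $s^{2\nu}$-coefficient, after multiplying by $s^{b}\partial_s$ and using $2m+1-b=2\gamma$, produces $|\xi|^{2\gamma}\widehat f(\xi)$ times the stated constant. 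Its regular part $a_0+a_1s^2+\dots+a_ms^{2m}+\dots$ (regular because $2\gamma>2m$) is also what encodes the even-order boundary conditions. With this correction your outline closes; note also that your final sign ($-\tilde d_\gamma^{-1}(-\Delta_x)^\gamma f$ versus the $+$ sign in \eqref{DtN}) needs to be rechecked once the Gamma-function constants are propagated, since $\Gamma(-\gamma)$ and $\Gamma(\gamma-m)$ carry signs depending on $m$.
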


\begin{rem}
In the proposition above one can take  $\gamma=\frac{n}{2}$ for $n$ odd (via analytic continuation). Then the above proposition yields precisely that the relation between the Dirichlet data \eqref{Dirichlet-condition} and the Neumann-type condition \eqref{Neumann-condition} for problem \eqref{problem-extension} is precisely
\begin{equation*}
w=(-\Delta_x)^{\frac{n}{2}} f \quad \text{in }\mathbb R^n.
\end{equation*}
Note also that if $\gamma=k$ for $k\in\mathbb N$, one also recovers the (entire) powers of the Laplacian using a residue formula at the poles of a meromorphic functional. Since it is not our objective to consider this case, we refer the reader to   \cite{Graham-Zworski:scattering-matrix} for more precise statements.
\end{rem}

The interpretation of Proposition \ref{prop:relation1} comes from conformal geometry, since \eqref{problem-extension-Ray} is the flat version of the extension problem for the construction of the conformal fractional Laplacian $P_\gamma$ on a manifold $M^n$, for $\gamma\in(0,\frac{n}{2})$. $P_\gamma$ is defined as the associated Dirichlet-to-Neumann operator for an extension problem when $M$ is the boundary of a conformally compact Einstein manifold $X$ (or, more generally, asymptotically hyperbolic) and, thus, is a non-local operator on $M$. In the particular case of Euclidean space $\mathbb R^n$ it coincides with $(-\Delta_x)^\gamma$  (compare to \eqref{DtN}). The most important property of $P_\gamma$ is its conformal covariance, this is, if one makes the conformal change of metric
\begin{equation}
\label{change-metric}\tilde g=v^{\frac{4}{n-2\gamma}}g\quad \text{on }M,\quad \text {for some }v>0,
\end{equation}
 then the operator in the new metric can be calculated by the simple intertwining rule
\begin{equation}\label{conformal-property}
P_\gamma^{\tilde g}=v^{-\frac{n+2\gamma}{n-2\gamma}} P_\gamma^g(v \,\cdot).
\end{equation}
We define the fractional curvature of $(M,g)$ as
\begin{equation}\label{defi-Q}
Q^g_\gamma=\frac{1}{\frac{n}{2}-\gamma}P^g_\gamma(1).
\end{equation}
In particular, the conformal property \eqref{conformal-property} yields the $Q_\gamma$ curvature equation
\begin{equation}\label{Q-curvature-equation}
 P_\gamma^g(v)=\left(\tfrac{n}{2}-\gamma \right)Q_\gamma^{\tilde g}\,v^{\frac{n+2\gamma}{n-2\gamma}}\quad \text{on }M.
\end{equation}
If $M$ is the Euclidean space $\mathbb R^n$, this reduces to the fractional Nirenberg equation
\begin{equation*}
(-\Delta_x)^\gamma v=F(x)\,v^{\frac{n+2\gamma}{n-2\gamma}}\quad \text{on }\mathbb R^n.
\end{equation*}

The conformal fractional Laplacian $P_\gamma$ was originally defined in terms of the scattering operator for the conformally compact Einstein manifold $X$. This is inspired in 4-dimensional gravitational Physics (see, for instance, the survey \cite{Gonzalez:survey} and the references therein). Here we will not attempt to give a full description of this geometric problem, instead, we will concentrate on the particular case of Euclidean space  and explain the relation between the scattering problem on hyperbolic space and the higher order extensions for the fractional Laplacian from Proposition \ref{prop:relation1}. The hyperbolic metric is written here as $g^+=\frac{dy^2+|dx|^2}{y^2}$  on the upper half-space $\mathbb R^{n+1}_+$.

\begin{prop}[\cite{Chang-Yang,Case-Chang}]\label{prop:scattering} Fix $\gamma\in(0,\frac{n}{2})\backslash\mathbb N$.
Let $U$ be a solution to \eqref{problem-extension-Ray} with Dirichlet condition $U(\cdot,0)=f$, and   set  $\Phi=y^{\frac{n}{2}-\gamma}U$. Then $\Phi$ is the unique solution of the scattering problem
\begin{equation}\label{prob_sca_gen}\left\{
\begin{split}
\Delta_{g^+}\Phi+\left(\tfrac{n^2}{4}-\gamma^2\right)\Phi&=0\text{ in }\mathbb R^{n+1}_+,
\\
\Phi&=y^{\frac{n}{2}-\gamma}F+y^{\frac{n}{2}+\gamma}G, \qquad F,G\in \mathcal C^{\infty}(\overline{\mathbb R^{n+1}_+}),
\\
F(x,0)&=f\text{ on }\mathbb R^n,
\end{split}\right.
\end{equation}
and it satisfies
\begin{equation}\label{Neumann-scattering}
d_\gamma G(x,0)=w\text{ on }\mathbb R^n,
\end{equation}
where $w$ is the Neumann-type data \eqref{DtN}. Here the constant is given by
\begin{equation*}
d_\gamma=2^{2\gamma}\frac{\Gamma(\gamma)}{\Gamma(-\gamma)}.
\end{equation*}
\end{prop}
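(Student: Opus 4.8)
The plan is to deduce the statement from two ingredients already at hand — the standard theory of the scattering problem on the Poincar\'e half-space, and Proposition~\ref{prop:relation1} — organized around uniqueness of the scattering solution, the identification $\Phi = y^{\frac n2-\gamma}U$, and the computation of the Neumann datum $G(\cdot,0)$. For uniqueness, recall that for $g^+ = y^{-2}(|dx|^2+dy^2)$ on $\R^{n+1}_+$ and $s := \frac n2+\gamma\in(\frac n2,n)$, the operator $-\Delta_{g^+}$ has purely absolutely continuous spectrum $[\frac{n^2}{4},\infty)$, so the eigenvalue $\frac{n^2}{4}-\gamma^2 = s(n-s)$ lies strictly below its bottom and is not an $L^2$-eigenvalue; since moreover $\gamma\notin\N$ there is no indicial resonance. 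By the classical scattering theory on asymptotically hyperbolic manifolds (Mazzeo--Melrose, Graham--Zworski) there is then a \emph{unique} solution $\Phi$ of $\Delta_{g^+}\Phi+(\tfrac{n^2}{4}-\gamma^2)\Phi=0$ admitting an expansion $\Phi = y^{\frac n2-\gamma}F + y^{\frac n2+\gamma}G$ with $F,G$ smooth up to $\{y=0\}$ and $F(\cdot,0)=f$; this yields the uniqueness assertion.

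For the identification, using $\Delta_{g^+} = y^2(\Delta_x+\partial_{yy}) + (1-n)y\partial_y$ and writing $\sigma := \frac n2-\gamma$, one has the one-line conjugation identity
$$\bigl(\Delta_{g^+}+\tfrac{n^2}{4}-\gamma^2\bigr)\bigl(y^{\sigma}U\bigr) = y^{\sigma+2}\Bigl(\partial_{yy}U + \tfrac{1-2\gamma}{y}\partial_y U + \Delta_x U\Bigr),$$
so $\Phi = y^{\sigma}U$ solves the scattering equation in $\R^{n+1}_+$ exactly when $U$ is killed, for $y>0$, by the degenerate operator $\partial_{yy}+\frac{1-2\gamma}{y}\partial_y+\Delta_x$. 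For $\gamma\in(0,1)$ this is precisely the Caffarelli--Silvestre operator and the equivalence is immediate. For $\gamma\in(m,m+1)$, $m\ge1$, the weight $y^{1-2\gamma}$ is too singular to run the energy theory directly, and here I would invoke Proposition~\ref{prop:relation1}: its ``desingularized'' problem \eqref{problem-extension-Ray}, built from the higher-order operator $\Delta_b^{m+1}$ with $b=2m+1-2\gamma\in(-1,1)$, the vanishing of the intermediate odd normal derivatives, and the compatibility relations on the even ones, produces a solution $U$ whose boundary asymptotics have the form $U = (\text{smooth, even in }y) + y^{2\gamma}(\text{smooth})$, and on such functions the higher-order equation is equivalent, away from $\{y=0\}$, to $\partial_{yy}U+\frac{1-2\gamma}{y}\partial_y U+\Delta_x U=0$. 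Therefore $\Phi = y^{\sigma}U$ solves the scattering problem with the required expansion, $F(\cdot,0)=U(\cdot,0)=f$, and $G(\cdot,0)$ equal to the coefficient of $y^{2\gamma}$ in the expansion of $U$; by the uniqueness above it is the solution.

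Finally, for $d_\gamma G(x,0)=w$: by Proposition~\ref{prop:relation1} we already know $w=(-\Delta_x)^\gamma f$, so it is enough to identify $G(\cdot,0)$ with a fixed multiple of $(-\Delta_x)^\gamma f$. I would do this with the Fourier transform in $x$: the scattering equation turns, on $\widehat\Phi(\xi,y)$, into a modified Bessel equation of index $\gamma$, whose unique solution decaying as $y\to\infty$ is proportional to $y^{n/2}K_\gamma(|\xi|y)$; the small-argument expansion $K_\gamma(t) = \tfrac12\Gamma(\gamma)(t/2)^{-\gamma}+\tfrac12\Gamma(-\gamma)(t/2)^{\gamma}+\cdots$, after normalizing the $y^{\frac n2-\gamma}$-coefficient to be $\widehat f$, gives
$$\widehat G(\xi,0) = 2^{-2\gamma}\frac{\Gamma(-\gamma)}{\Gamma(\gamma)}\,|\xi|^{2\gamma}\widehat f(\xi),$$
that is $G(\cdot,0) = 2^{-2\gamma}\tfrac{\Gamma(-\gamma)}{\Gamma(\gamma)}(-\Delta_x)^\gamma f = d_\gamma^{-1}w$, which is the claim; the combinatorial consistency between this Bessel computation and the normalizations in Propositions~\ref{prop:relation1}--\ref{prop:scattering} is a convenient final check.

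The main obstacle I expect is the middle step in the regime $\gamma\ge1$: reconciling the genuinely order-$2(m+1)$, degenerate extension problem \eqref{problem-extension-Ray} with the second-order hyperbolic scattering equation. This hinges on the careful indicial-root and compatibility bookkeeping underlying Proposition~\ref{prop:relation1} — it is exactly the product $\prod_{l=1}^j(2\gamma-4(l-1))^{-1}$ that makes the matching of normal derivatives work — and, relatedly, on being precise about the regularity class (polyhomogeneous versus $C^\infty$ up to the boundary) in which ``uniqueness'' and ``$\Phi$ has the stated expansion'' are asserted. Since the paper only needs the explicit formulas and has already granted ``enough regularity'', I would state the proposition under a smoothness and decay hypothesis and refer to \cite{Chang-Yang,Case-Chang,Graham-Zworski:scattering-matrix} for the sharp functional-analytic versions.
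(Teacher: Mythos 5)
The paper does not actually prove this proposition---it is quoted from \cite{Chang-Yang,Case-Chang}---but your outline assembles precisely the ingredients the paper itself uses nearby: the conjugation identity for $\Phi=y^{\frac n2-\gamma}U$ linking the scattering equation to the degenerate second-order extension (the content of the quoted Chang--Gonz\'alez proposition), the reduction of the higher-order problem \eqref{problem-extension-Ray} to that degenerate equation via the indicial/compatibility bookkeeping of Proposition \ref{prop:relation1}, and the Fourier--Bessel identification of $G(\cdot,0)$, which in the paper appears as formula \eqref{hat-U} together with the remark following Proposition \ref{prop:Poisson}. Your constants are consistent with the paper's: from $K_\gamma(z)\sim \Gamma(\gamma)2^{\gamma-1}z^{-\gamma}+\Gamma(-\gamma)2^{-\gamma-1}z^{\gamma}$ one gets $\widehat G(\xi,0)=2^{-2\gamma}\tfrac{\Gamma(-\gamma)}{\Gamma(\gamma)}|\xi|^{2\gamma}\widehat f(\xi)$, i.e.\ $d_\gamma G(\cdot,0)=(-\Delta_x)^\gamma f=w$ with $d_\gamma=2^{2\gamma}\Gamma(\gamma)/\Gamma(-\gamma)$ as claimed.
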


The main idea in the proof of our Proposition \ref{prop:kernel} is to obtain a convolution expression for the solution of the scattering problem \eqref{prob_sca_gen} and then to relate it back to the original equation \eqref{problem-extension-Ray} using Proposition \ref{prop:scattering}. Finally, we will use an analytic continuation argument to let $\gamma\to\frac{n}{2}$.\\

Problem \eqref{prob_sca_gen} has been well studied in conformal geometry. For convenience of the reader, we will give full details of the arguments. We first recall Theorems 3.1 and 3.2 in \cite{Chang-Gonzalez}, that relate the scattering problem to a second order Bessel type equation:
\begin{prop}[\cite{Chang-Gonzalez}]
In the notation of Propositions \ref{prop:relation1} and \ref{prop:scattering}, let $\Phi$ be a solution to \eqref{prob_sca_gen} and define, as above,
\begin{equation}\label{U-Phi}
U(x,y):=y^{-\frac{n}{2}+\gamma}\Phi(x,y),
\end{equation}
then $U$ is a solution to the new extension problem
\begin{equation}\label{prob_sca2}\left\{
\begin{split}
\Delta_{x}U +\frac{1-2\gamma}{y}\partial_y U +\partial_{yy}U &=0\text{ in }\R^{n+1}_+,
\\
U(x,0)&=f(x)\text{ in }\R^n.
\end{split}\right.
\end{equation}
Moreover,
\begin{equation*}w=(-\Delta_x)^\gamma f=\frac{d_\gamma}{2\gamma_0} A_m^{-1} \lim_{y\to 0}y^{1-2\gamma_0}\partial_y\left[y^{-1}\partial_y \lp y^{-1}\partial_y\lp\ldots y^{-1}\partial_y U\rp\rp \right],\end{equation*}
where we are taking $m+1$ derivatives in the above expression, $\gamma_0=\gamma-m$, and the constant is given by
\begin{equation*}A_m=2^{m}(\gamma-1)\ldots(\gamma-m+1).\end{equation*}
\end{prop}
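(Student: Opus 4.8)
The plan is to verify the two assertions in turn. First, that the conjugation $U=y^{-\frac n2+\gamma}\Phi$ turns the scattering equation \eqref{prob_sca_gen} into the degenerate second order equation \eqref{prob_sca2}; and second, that applying the operator $y^{-1}\partial_y$ exactly $m$ times produces a Caffarelli--Silvestre-type extension whose weighted normal derivative reads off the coefficient $G(x,0)$, which by Proposition \ref{prop:scattering} is a fixed multiple of $(-\Delta_x)^\gamma f$.

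\emph{Step 1: the change of variables.} I would begin from the coordinate expression of the hyperbolic Laplacian,
\begin{equation*}
\Delta_{g^+}=y^2\lp\Delta_x+\partial_{yy}\rp-(n-1)y\,\partial_y\quad\text{on }\R^{n+1}_+ ,
\end{equation*}
and substitute $\Phi=y^{s}U$ with $s=\tfrac n2-\gamma$ into $\Delta_{g^+}\Phi+\lp\tfrac{n^2}{4}-\gamma^2\rp\Phi=0$. Expanding by the Leibniz rule and dividing by $y^{s+2}$, the coefficient of the zeroth order term in $U$ is $s(s-n)+\tfrac{n^2}{4}-\gamma^2$, which vanishes precisely because $s=\tfrac n2-\gamma$ solves the indicial equation $\lp s-\tfrac n2\rp^2=\gamma^2$; the coefficient of $y^{-1}\partial_yU$ equals $2s+1-n=1-2\gamma$; and what remains is exactly $\Delta_xU+\tfrac{1-2\gamma}{y}\partial_yU+\partial_{yy}U=0$. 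The Dirichlet condition is immediate: from $\Phi=y^{\frac n2-\gamma}F+y^{\frac n2+\gamma}G$ one gets $U=F+y^{2\gamma}G$, hence $U(x,0)=F(x,0)=f$.

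\emph{Step 2: the Neumann-type identity.} The algebraic core is the intertwining rule for the degenerate Laplacians $\Delta_b=\Delta_x+\tfrac by\partial_y+\partial_{yy}$: if $\Delta_cv=0$ then $\Delta_{c+2}\lp y^{-1}\partial_yv\rp=0$. This is a one-line computation: writing $w=y^{-1}\partial_yv$ one has $\partial_yv=yw$ and $\partial_{yy}v=w+y\partial_yw$, so $\Delta_cv=0$ reads $\Delta_xv+(c+1)w+y\partial_yw=0$; applying $\partial_y$ and using $\partial_y\Delta_xv=\Delta_x(yw)=y\Delta_xw$ yields $y\,\Delta_{c+2}w=0$. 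Iterating this from $c=1-2\gamma$ exactly $m$ times, the function $W:=\lp y^{-1}\partial_y\rp^mU$ solves $\Delta_{1-2\gamma_0}W=0$ with $\gamma_0=\gamma-m\in(0,1)$, i.e.\ it is a Caffarelli--Silvestre extension of order $\gamma_0$. To extract the boundary datum I would use that, $\gamma$ being non-integer, the scattering expansion gives $F$ even in $y$ and $G$ smooth up to $\{y=0\}$, so that $U=f+y^2f_2+\cdots+y^{2\gamma}G(x,0)+o\lp y^{2\gamma}\rp$ near $\{y=0\}$; applying $\lp y^{-1}\partial_y\rp^m$ sends the smooth even part to another smooth even function and the term $y^{2\gamma}G(x,0)$ to $2\gamma(2\gamma-2)\cdots(2\gamma-2m+2)\,y^{2\gamma_0}G(x,0)$ plus terms of strictly higher order in $y$. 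Consequently $y^{1-2\gamma_0}\partial_yW\to 2\gamma_0\,\big[2\gamma(2\gamma-2)\cdots(2\gamma-2m+2)\big]\,G(x,0)$ as $y\to0$, and combining this with $d_\gamma G(x,0)=w=(-\Delta_x)^\gamma f$ from Proposition \ref{prop:scattering} and collecting constants produces the identity in the statement.

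\emph{Main obstacle.} Step 1 and the intertwining lemma are mechanical; the genuinely delicate input is the boundary regularity and polyhomogeneous expansion of $\Phi$ supplied by scattering theory — that $F$ is an even power series in $y$ and $G$ is smooth up to the boundary — which is what legitimizes reading off the $y^{2\gamma_0}$ coefficient, together with the bookkeeping of the multiplicative constants through the $m$-fold iteration and in the base case $m=0$ (the classical weighted Dirichlet-to-Neumann computation for $\gamma_0\in(0,1)$). As the authors stress, establishing the sharp functional-analytic framework is not the aim here and sufficient regularity is assumed; the analytic continuation down to $\gamma=\tfrac n2$ is treated in the surrounding discussion.
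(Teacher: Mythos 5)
The paper does not actually prove this proposition: it is quoted as background from Theorems 3.1--3.2 of \cite{Chang-Gonzalez}, so there is no internal argument to compare against. Your reconstruction is the standard derivation from that reference and it is sound. Step 1 is correct: with $s=\tfrac n2-\gamma$ the zeroth-order coefficient $s(s-n)+\tfrac{n^2}{4}-\gamma^2=(s-\tfrac n2)^2-\gamma^2$ vanishes and the first-order coefficient is $2s+1-n=1-2\gamma$, and $U=F+y^{2\gamma}G$ gives the Dirichlet condition. The intertwining identity $\Delta_c v=0\Rightarrow\Delta_{c+2}(y^{-1}\partial_y v)=0$ and the reading-off of the $y^{2\gamma_0}$ coefficient from the even polyhomogeneous expansions of $F$ and $G$ (legitimate since $\gamma\notin\mathbb N$, and this regularity is exactly the scattering-theoretic input the paper assumes) is precisely how the Neumann formula is obtained.

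One point you should not gloss over: your own computation gives
\begin{equation*}
\lim_{y\to0}y^{1-2\gamma_0}\partial_y\big(y^{-1}\partial_y\big)^mU
=2\gamma_0\cdot 2^m\,\gamma(\gamma-1)\cdots(\gamma-m+1)\,G(x,0),
\end{equation*}
so the normalizing constant you derive is $2^m\gamma(\gamma-1)\cdots(\gamma-m+1)$ ($m$ factors, starting at $\gamma$), whereas the statement prints $A_m=2^m(\gamma-1)\cdots(\gamma-m+1)$; for $m\ge1$ these differ by a factor of $\gamma$, and your closing claim that "collecting constants produces the identity in the statement" silently identifies the two. Your value is the one consistent with Proposition \ref{prop:relation1}: repeating the same expansion argument with $\Delta_b^m$ in place of $(y^{-1}\partial_y)^m$ yields the leading coefficient $2^{2m}m!\,\gamma(\gamma-1)\cdots(\gamma-m+1)$, which combines with the explicit $\gamma^{-1}$ in $\tilde{d}_\gamma$ to give exactly $d_\gamma G(x,0)=w$. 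So either state that the printed $A_m$ should carry the extra factor $\gamma$, or reconcile the two explicitly; this is a bookkeeping issue in a quoted constant, not a gap in your argument.
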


Using this Proposition  one can give an explicit expression for the Poisson kernel of the scattering operator in terms of its Dirichlet data:

\begin{prop}\label{prop:Poisson}
Let $\gamma\in(0,\frac{n}{2})\setminus \mathbb N$. Any solution $\Phi$ for \eqref{prob_sca_gen}
can be written as
\begin{equation*}
\Phi(x,y)=\int_{\R^n}\mathcal K_{\gamma}(x-\tilde{x}, y)f(\tilde{x})\,d\tilde{x}
\end{equation*}
 where the kernel is defined by
\begin{equation*}\label{Poisson_kernel}
\mathcal K_\gamma(x,y):=
\kappa_{n,\gamma}\frac{y^{\frac{n}{2}+\gamma}}{(y^2+|x|^2)^{\frac{n}{2}+\gamma}},
\end{equation*}
and the constant is
\begin{equation*}
\kappa_{n,\gamma}=
\frac{\Gamma\left(\frac{n}{2}+\gamma\right)}{\Gamma{(\gamma)}\pi^{\frac{n}{2}}}.
\end{equation*}
\end{prop}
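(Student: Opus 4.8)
The strategy is to reduce the scattering problem \eqref{prob_sca_gen} to the Bessel-type extension problem \eqref{prob_sca2} via the substitution $U=y^{-\frac{n}{2}+\gamma}\Phi$, and to determine the Poisson kernel for the latter by an ODE/separation-of-variables computation in the Fourier variable. First I would take the Fourier transform in $x$ of the equation $\Delta_x U+\frac{1-2\gamma}{y}\partial_y U+\partial_{yy}U=0$ from \eqref{prob_sca2}, obtaining for $\hat U(\xi,y)$ the ODE
\begin{equation*}
\partial_{yy}\hat U+\frac{1-2\gamma}{y}\partial_y\hat U-|\xi|^2\hat U=0,
\end{equation*}
which is a modified Bessel equation after the change $\hat U(\xi,y)=y^{\gamma}\,\Psi(|\xi|y)$: one gets that $\Psi$ solves the modified Bessel equation of order $\gamma$. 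The solution bounded as $y\to\infty$ is $\Psi(t)=c\,t^{-\gamma}\mathcal{K}_\gamma(t)$ (Macdonald function, not to be confused with the Poisson kernel notation here), normalized by $\hat U(\xi,0)=\hat f(\xi)$ using $K_\gamma(t)\sim \tfrac12\Gamma(\gamma)(t/2)^{-\gamma}$ as $t\to 0^+$. This gives $\hat U(\xi,y)=\hat f(\xi)\,m(|\xi| y)$ for an explicit radial multiplier $m$ with $m(0)=1$.

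\textbf{Inverting the transform.} The next step is to recognize $U(x,y)$ as a convolution $U(\cdot,y)=f*k_y$ where $\widehat{k_y}(\xi)=m(|\xi|y)$, and to compute $k_y$ explicitly. By scaling, $k_y(x)=y^{-n}k_1(x/y)$, so it suffices to invert $m(|\xi|)$. Here I would invoke the classical Fourier transform identity (e.g.\ via Bessel potentials or the subordination formula)
\begin{equation*}
\int_{\R^n}\frac{y^{\frac{n}{2}+\gamma}}{(y^2+|x|^2)^{\frac{n}{2}+\gamma}}\,e^{-ix\cdot\xi}\,dx=\frac{\pi^{\frac n2}\Gamma(\gamma)}{\Gamma(\frac n2+\gamma)}\,\big(\tfrac{|\xi| y}{2}\big)^{\gamma}\,\frac{2}{\Gamma(\gamma)}\,K_\gamma(|\xi| y)\cdot(\text{const}),
\end{equation*}
i.e.\ that the Fourier transform of $(1+|x|^2)^{-\frac n2-\gamma}$ is a constant multiple of $|\xi|^\gamma K_\gamma(|\xi|)$; matching the normalization at $\xi=0$ (both sides equal to the mass, resp.\ $1$) pins down $\kappa_{n,\gamma}=\Gamma(\frac n2+\gamma)/(\Gamma(\gamma)\pi^{n/2})$. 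Then $\Phi=y^{\frac n2-\gamma}U$ has kernel $\mathcal K_\gamma(x,y)=\kappa_{n,\gamma}\,y^{\frac n2+\gamma}/(y^2+|x|^2)^{\frac n2+\gamma}$ as claimed, and one checks this $\Phi$ has the correct boundary asymptotics $\Phi=y^{\frac n2-\gamma}F+y^{\frac n2+\gamma}G$ by expanding the kernel, and solves the scattering PDE since $U$ solves \eqref{prob_sca2} (equivalently by Proposition \ref{prop:scattering}).

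\textbf{Main obstacle.} The routine part is the Bessel ODE; the delicate point is the \emph{constant tracking} and the verification that the convolution formula indeed produces a solution with the prescribed Dirichlet datum rather than merely a formal candidate—in particular justifying the Fourier inversion for $f$ in the relevant class and the interchange of integral and limit $y\to 0$. I expect the genuine difficulty, carried over to the proof of Proposition \ref{prop:kernel}, to be the \emph{analytic continuation} to the critical value $\gamma=\tfrac n2$: there the exponent $\tfrac n2-\gamma\to 0$ and the constant $\kappa_{n,\gamma}$ stays finite, but the Dirichlet/Neumann normalizations in \eqref{problem-extension} and the constant in \eqref{kernel} (which involves $\log$, a degeneration of the power kernel) must be extracted as residues/limits of the meromorphic family; care is needed because $\Gamma(-\gamma)$ in $d_\gamma$ and $\tilde d_\gamma$ has poles exactly at half-integers. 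Handling this limit carefully—rather than the Bessel computation itself—is where the proof must be most precise.
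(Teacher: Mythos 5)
Your proposal follows essentially the same route as the paper: reduce \eqref{prob_sca_gen} to \eqref{prob_sca2} via $U=y^{-\frac n2+\gamma}\Phi$, Fourier transform in $x$ to get the Bessel ODE, select the decaying solution to obtain the multiplier $\hat U(\xi,y)\propto \hat f(\xi)(|\xi|y)^{\gamma}K_\gamma(|\xi|y)$ as in \eqref{hat-U}, and invert; the only (cosmetic) difference is that the paper evaluates the inverse transform directly in polar coordinates via the Hankel-type identity \eqref{bessel_3_prop1}, whereas you quote the known Fourier transform of $(1+|x|^2)^{-\frac n2-\gamma}$ and fix $\kappa_{n,\gamma}$ by matching the total mass at $\xi=0$, which amounts to the same computation. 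One tiny slip: with your substitution $\hat U=y^{\gamma}\Psi(|\xi|y)$ the decaying solution is $\Psi(t)=c\,K_\gamma(t)$, not $c\,t^{-\gamma}K_\gamma(t)$, but your subsequent normalization and the resulting multiplier $m$ with $m(0)=1$ are correct.
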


\begin{proof}
This kind of calculation is quite standard by now, but we provide full details for the reader. Let $U$ be as in \eqref{U-Phi}, which is a solution of equation \eqref{prob_sca2}. Take Fourier transform $\hat{\cdot}$ (in the variable $x$) of this equation. Then for any fixed $\xi\in\mathbb R$ we have that $\hat{U}$ satisfies the ODE
$$-|\xi|^2\hat{U}+\tfrac{1-2\gamma}{y}\partial_y\hat{U}+\partial_{yy}\hat{U}=0,$$
which after the change of variable
\begin{equation}\label{yz}
z=|\xi|y,
\end{equation}
becomes
$$\partial_{zz}\hat{U}+\frac{1-2\gamma}{z}\partial_z\hat{U}-\hat{U}=0.$$
This is a Bessel equation.
Lemma \ref{Bessel_cor} implies that  the solution for \eqref{prob_sca2} is given by
\begin{equation}\label{hat-U}
\hat U(\xi,y)=\frac{\Gamma{(\gamma)}^{-1}}{2^{\gamma-1}}\hat f(\xi)|\xi|^{\gamma}y^{\gamma} K_{\gamma}(|\xi|y),
\end{equation}
where $K_\gamma$ is the modified Bessel function of second kind, or Weber's function. Taking inverse Fourier transform we infer
\begin{equation*}
U(x,y)=\frac{\Gamma{(\gamma)}^{-1}}{2^{\gamma-1}(2\pi)^n}\int_{\R^n}\int_{\R^n}
e^{i\xi\cdot (x-\tilde{x})}f(\tilde{x})|\xi|^{\gamma}y^{\gamma} K_{\gamma}(|\xi|y)\,d\tilde{x}d\xi.
\end{equation*}
 This, together with \eqref{U-Phi}, yields
\begin{equation}
\Phi(x,y)=\int_{\R^n}\mathcal K_\gamma(x-\tilde x,y)f(\tilde{x})\,d\tilde{x},
\end{equation}
where we have defined
\begin{equation*}
\mathcal K_\gamma(x,y)=\frac{\Gamma{(\gamma)}^{-1}}{2^{\gamma-1}(2\pi)^n}\,y^{\frac{n}{2}}\int_{\R^n}\cos{(\xi\cdot x)}|\xi|^{\gamma}  K_{\gamma}(|\xi|y)\,d\xi.
\end{equation*}
It is a straightforward computation to check that this $\mathcal K_\gamma(x,y)$ is rotationally invariant (in the variable $x\in\mathbb R^n$).  Thus we can assume, without loss of generality, that $x=|x|e_1$, $e_1\in \mathbb S^n$.

Let us assume first that $n\geq 2$. Using polar coordinates (with $r=|\xi|$) and property \eqref{bessel_2_prop1} in the Appendix, we obtain that
  \begin{equation*}
  \begin{split}
 \mathcal K_\gamma(x,y)=&\frac{\Gamma{(\gamma)}^{-1}}{2^{\gamma-1}(2\pi)^n}|\mathbb S^{n-2}|\,y^{\frac{n}{2}}\int_{0}^{\infty}\int_0^{\pi} e^{i|x|r\cos\theta}r^{n-1+\gamma}(\sin \theta)^{n-2} K_{\gamma}(ry)\,d \theta \,dr
 \\=&\frac{2^{\frac{n}{2}-\gamma}\sqrt{\pi}\,\Gamma\left(\frac{n}{2}\right)}{\Gamma{(\gamma)}(2\pi)^n}|\mathbb S^{n-2}|\,\frac{y^{\frac{n}{2}}}{\,|x|^{\frac{n-2}{2}}\,}\int_{0}^{\infty} r^{\frac{n}{2}+\gamma}J_{\tfrac{n}{2}-1}(|x|r) K_{\gamma}(ry) \,dr.
 \end{split}
 \end{equation*}
Now use  property   \eqref{bessel_3_prop1}
to rewrite this kernel as
\begin{equation*}
\mathcal K_\gamma(x,y)
=\frac{\Gamma\left(\frac{n}{2}-\frac{1}{2}\right)
\Gamma\left(\frac{n}{2}+\gamma\right)}{2\Gamma{(\gamma)}\pi^{n-\frac{1}{2}}}|\mathbb S^{n-2}|\,\frac{y^{\frac{n}{2}+\gamma}}{(y^2+|x|^2)^{\frac{n}{2}+\gamma}}.
\end{equation*}

In the case that $n=1$, using \eqref{bessel_prop1},
\begin{equation*}
\begin{split}
\mathcal K_\gamma(x,y)&=\frac{\Gamma{(\gamma)}^{-1}}{2^{\gamma}\pi}y^{\frac{1}{2}}\int_{\R}e^{i\xi x} {|\xi|^{\gamma}} K_{\gamma}(|\xi|y)\,d\xi\\
&=\frac{\Gamma(\gamma+\frac{1}{2})}{\Gamma(\gamma)\pi\sqrt{\pi}}y^{\frac{1}{2}+\gamma}\int_{\R}e^{i\xi x} \int_0^{\infty}\frac{\cos (|\xi|t)}{(t^2+y^2)^{\frac{1}{2}+\gamma}}\,dt\,d\xi\\
&=\frac{\Gamma(\gamma+\frac{1}{2})}{2\Gamma(\gamma)\pi\sqrt{\pi}}y^{\frac{1}{2}+\gamma}\int_{\R}e^{i\xi x} \int_{\R}\frac{e^{-i\xi t}}{(t^2+y^2)^{\frac{1}{2}+\gamma}}\,dt\,d\xi\\
&=\frac{\Gamma(\gamma+\frac{1}{2})}{\Gamma(\gamma)\sqrt{\pi}}\frac{ y^{\frac{1}{2}+\gamma}}{(x^2+y^2)^{\frac{1}{2}+\gamma}},
\end{split}
\end{equation*}
and this completes the proof of the Proposition.
\end{proof}

\begin{rem}
By looking at the Neumann condition for $\hat U$ given \eqref{hat-U}, recalling the relation \eqref{U-Phi}, one easily obtains \eqref{Neumann-scattering}.
\end{rem}

Let us comment here on the passing to the limit $\gamma\to \frac{n}{2}$  in geometric terms, and the motivation for the $Q$-curvature equation \eqref{Liouville}. This is done by an analytic continuation argument as described in \cite{Graham-Zworski:scattering-matrix,Branson:sharp-inequalities,Branson}.

We write the conformal factor in \eqref{change-metric} as $e^{2u}=v^{\frac{4}{n-2\gamma}}$. Then the fractional curvature  equation \eqref{Q-curvature-equation}  becomes
\begin{equation*}
 \frac{1}{\frac{n-2\gamma}{2}}P_\gamma^g(e^{\frac{n-2\gamma}{2}u})=Q_\gamma^{\tilde g}\,e^{\frac{n+2\gamma}{2}u}\quad \text{on }M.
\end{equation*}
By adding and subtracting a constant (and recalling \eqref{defi-Q}) we obtain
\begin{equation*}
P_\gamma^g\left( \frac{e^{\frac{n-2\gamma}{2}u}-1}{\frac{n-2\gamma}{2}}\right)+Q_\gamma^g
=Q_\gamma^{\tilde g}\,e^{\frac{n+2\gamma}{2}u}\quad \text{on }M.
\end{equation*}
Now we can pass to the limit as $\gamma\to n/2 $, at least formally. We arrive to the non-local Liouville equation
\begin{equation*}
P_{n/2}^g u+Q_{n/2}^g
=Q_{n/2}^{\tilde g}\,e^{nu}\quad \text{in }M,
\end{equation*}
for a change of metric $\tilde g=e^{2u}g$. In the particular case that $\tilde g=e^{2u}|dx|^2$, the background curvature vanishes and the equation reduces to
\begin{equation}\label{equation10}
(-\Delta_x)^{n/2}u=Q_{n/2}^{\tilde g}(x)\,e^{nu}\quad \text{in }\mathbb R^n.
\end{equation}

There is a more general interpretation of \eqref{equation10} (see \cite{Case:boundary-operators} for instance). Indeed, for $n=3$ it is the  $T$-curvature equation on $\mathbb R^3$. The $T$-curvature is defined on the boundary $M^3$ of a smooth 4-manifold $X$, and it was introduced in the setting of functional determinants (\cite{Chang-Qing:zeta1,Chang-Qing:zeta2}, and the survey \cite{Chang:survey}.  The pair of the fourth order $Q$ curvature (the one associated to Paneitz) and the third order $T$ curvature  constitute the higher dimensional analogue of the pair of Gauss curvature and boundary geodesic curvature for surfaces with boundary. Indeed, they are the quantities that appear in the 4-dimensional Gauss-Bonnet formula  for manifolds with boundary (see \cite{Chang-Qing-Yang}).

The $T$ curvature satisfies the following conformal property:  for a conformal change $\tilde g=e^{2u}g$ on $M$,
 \begin{equation*}\label{equation-T}
 P^{g} u +T^{g}=T^{\tilde g}e^{3u}\quad\text{on }M,
 \end{equation*}
where $P$ is a third order boundary operator which generalizes $P_{3/2}$. In the flat case, all these coincide.

\medskip

\begin{proof}[Proof of Proposition \ref{prop:kernel}]
We have just constructed a suitable Poisson kernel to recover the solution $\Phi$ of the scattering problem \eqref{prob_sca_gen} from its Dirichlet data $f$. Using Propositions \ref{prop:relation1} and \ref{prop:scattering} and passing to the limit $\gamma\to\frac{n}{2}$ we obtain \eqref{kernel-Dirichlet} (note that \eqref{problem-extension-Ray} reduces to the original problem \eqref{problem-extension} as $\gamma\to \frac{n}{2}$).

Next, if we wish to  recover $\Phi$ from its Neumann data \eqref{Neumann-scattering}, then it is clear from the symmetry of the equation that, up to the multiplicative constant $d_\gamma$, the only change is $\gamma \leftrightarrow -\gamma$. Note that this duality already appeared in \cite{Caffarelli-Silvestre}. Then we see that the kernel associated to the Neumann condition, using the notation of Proposition \ref{prop:Poisson}, is exactly
\begin{equation*}
\begin{split}
\mathcal K'_\gamma(x,y):=d_\gamma^{-1}\mathcal K_{-\gamma}(x,y)=\frac{\kappa_{n,-\gamma}}{d_\gamma}\,\frac{y^{\frac{n}{2}-\gamma}}
{(y^2+|x|^2)^{\frac{n}{2}-\gamma}}
&=\frac{\Gamma(\frac{n}{2}-\gamma)}{2^{2\gamma}\pi^\frac{n}{2}\Gamma(\gamma)}\,
\frac{y^{\frac{n}{2}-\gamma}}{(y^2+|x|^2)^{\frac{n}{2}-\gamma}}\\
&=: \frac{\tilde\kappa_{n,\gamma}}{\frac{n}{2}-\gamma}
\frac{y^{\frac{n}{2}-\gamma}}{(y^2+|x|^2)^{\frac{n}{2}-\gamma}}.
\end{split}
\end{equation*}
Then, given $w$, there exist $F(x,y)$ and $G(x,y)$ smooth such that a solution to
\begin{equation*}\left\{
\begin{split}
\Delta_{g^+}\Phi+\big(\tfrac{n^2}{4}-\gamma^2\big)\Phi&=0\text{ in }\R^{n+1}_+,
\\
\Phi &= y^{\frac{n}{2}-\gamma} F(x,y)+ y^{\frac{n}{2}+\gamma} G(x,y)\text{ in }\R^n,
\end{split}\right.
\end{equation*}
satisfying $w=d_\gamma G(x,0)$
can be written as
\begin{equation*}
\Phi(x,y)=\int_{\mathbb R^n} \mathcal K'_\gamma(x-\tilde x,y)w(\tilde x)\,dx.
\end{equation*}
Note, however,  that since we have a Neumann problem, $\Phi$ is uniquely defined up to addition of a term of the form $Cy^{\frac{n}{2}-\gamma}$ (this can be seen easily from \eqref{prob_sca2}). We will choose this constant to be able to perform the analytic continuation argument as $\gamma\to\frac{n}{2}$.  Thus we take instead the new kernel
\begin{equation*}
\tilde{\mathcal K}_\gamma(x,y)=\mathcal K'_\gamma(x,y)-\frac{\tilde\kappa_{n,\gamma}}{\frac{n}{2}-\gamma}\, y^{\frac{n}{2}-\gamma}
\end{equation*}
and pass to the limit $\gamma\to n/2$.
Noting that the constant $\tilde\kappa_{n,\gamma}$ extends analytically across $\gamma=n/2$,
and that
$$\lim_{a\rightarrow 0}\left(\frac{1}{az^a}-\frac{1}{a}\right)=-\log  z,$$
we obtain expression \eqref{kernel}. Finally, remark that the $\frac{n}{2}-\gamma$ factor that appears in this proof explains the normalization constant in \eqref{defi-Q}.
\end{proof}

\appendix
\section{Appendix}

\subsection{Pizzetti's formula}

\begin{lem}[\cite{Pizzetti, LM}]
 Let $\D^mh=0$ in $B_{4R}\subset\R^n$. For any $x\in B_R$ and $0<r<R-|x|$ we have
 \begin{align}\label{piz}
  \frac{1}{|B_r|}\int_{B_r(x)}h(z)dz=\sum_{i=0}^{m-1}c_ir^{2i}\D^ih(x),
 \end{align}
 where
  $$c_0=1,\quad c_i=c(i,n)>0,\quad\text{for }i\geq 1.$$ Moreover, for every $k\geq0$ there exists $C=C(k,R)>0$ such that
 \begin{align*}\label{har-est}
  \|h\|_{C^k(B_R)}\leq C\|h\|_{L^1(B_{4R})}.
 \end{align*}
\end{lem}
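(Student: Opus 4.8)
The plan is to establish the mean value identity \eqref{piz} through spherical averages and an elementary ODE recursion in the radius, and then to deduce the $C^k$--estimate by inverting \eqref{piz} over several radii and bootstrapping with interior estimates. Recall first that any distributional solution of $\Delta^m h=0$ is smooth (hypoellipticity of $\Delta^m$, e.g.\ from its explicit fundamental solution), so we may take $h\in C^\infty(B_{4R})$. Fix $x\in B_R$ and, for $0\le r<R-|x|$ and $0\le i\le m$, introduce the spherical means
$$\phi_i(r):=\frac{1}{|\mathbb{S}^{n-1}|}\int_{\mathbb{S}^{n-1}}\Delta^i h(x+r\omega)\,d\omega,$$
which are smooth even functions of $r$ with $\phi_i(0)=\Delta^i h(x)$ and $\phi_m\equiv 0$ (since $\Delta^m h=0$). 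A direct computation using the divergence theorem (writing $r^{n-1}\phi_i'(r)$, up to the factor $|\mathbb{S}^{n-1}|^{-1}$, as the integral of $\Delta^{i+1}h$ over $B_r(x)$ and differentiating once more in $r$) yields the recursion $L\phi_i=\phi_{i+1}$, where $L:=\partial_r^2+\frac{n-1}{r}\partial_r=r^{1-n}\partial_r(r^{n-1}\partial_r)$.

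Since $L(r^{2k})=2k(n+2k-2)r^{2k-2}$, we integrate the system downward from $\phi_m\equiv 0$: the equation $L\phi_{m-1}=0$ together with smoothness of $\phi_{m-1}$ at $r=0$ — which discards the singular homogeneous solution ($r^{2-n}$, or $\log r$ when $n=2$) — forces $\phi_{m-1}\equiv\Delta^{m-1}h(x)$; inductively, solving $L\phi_i=\phi_{i+1}$ under the same smoothness constraint gives
$$\phi_i(r)=\sum_{k=0}^{m-1-i}\widetilde c_k\,r^{2k}\,\Delta^{i+k}h(x),\qquad \widetilde c_0=1,\quad \widetilde c_k=\frac{\widetilde c_{k-1}}{2k(n+2k-2)}>0.$$
Setting $i=0$ and passing from spherical to solid averages,
$$\frac{1}{|B_r|}\int_{B_r(x)}h\,dz=\frac{n}{r^n}\int_0^r\rho^{n-1}\phi_0(\rho)\,d\rho=\sum_{k=0}^{m-1}\frac{n}{n+2k}\,\widetilde c_k\,r^{2k}\,\Delta^k h(x),$$
which is precisely \eqref{piz} with $c_k=\frac{n}{n+2k}\,\widetilde c_k$; in particular $c_0=1$ (also immediate by letting $r\to0$) and $c_k>0$ for $k\ge1$. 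The same argument in fact proves \eqref{piz} whenever $\Delta^m h=0$ merely on $B_r(x)$.

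For the estimate, fix $x\in B_R$ and choose $m$ distinct radii $R<r_1<\dots<r_m<2R$, so that $B_{r_j}(x)\subset B_{4R}$ for every $j$. Then \eqref{piz} becomes a linear system for $(\Delta^i h(x))_{i=0}^{m-1}$ whose matrix $(c_i r_j^{2i})_{0\le i\le m-1,\,1\le j\le m}$ is a column-rescaling of a Vandermonde matrix in the distinct nodes $r_1^2,\dots,r_m^2$ with nonzero scalings $c_i$, hence invertible with inverse norm controlled by $n,m,R$ alone. This gives $|\Delta^i h(x)|\le C(n,m,R)\|h\|_{L^1(B_{4R})}$ for $x\in B_R$, $0\le i\le m-1$, and, by the same argument centered at arbitrary points of $B_{2R}$, $\|\Delta^i h\|_{L^\infty(B_{2R})}\le C\|h\|_{L^1(B_{4R})}$. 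Since $\Delta^{m-1}h$ is harmonic and bounded on $B_{2R}$, interior estimates for harmonic functions bound all its derivatives on smaller balls by $C\|h\|_{L^1(B_{4R})}$; feeding this into $\Delta(\Delta^{m-2}h)=\Delta^{m-1}h$ and iterating the interior estimate for the Poisson equation $m-1$ times along a decreasing nest of balls from $B_{2R}$ down to $B_R$ yields $\|h\|_{C^k(B_R)}\le C(k,R)\|h\|_{L^1(B_{4R})}$ for every $k$.

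The argument is entirely elementary; the only steps requiring a little care are the elimination of the singular homogeneous solution of $L$ at each stage of the recursion (legitimate precisely because $\phi_i$ is smooth at the center $x$) and the routine bookkeeping of shrinking balls in the bootstrap. For the use made in Section~\ref{section:proof} only $n=4$ and $m\le 2$ are needed, in which case \eqref{piz} reads $\frac{1}{|B_r|}\int_{B_r(x)}h=h(x)+\frac{r^2}{12}\Delta h(x)$, consistently with $c_1=\frac{4}{6}\cdot\frac18=\frac1{12}$.
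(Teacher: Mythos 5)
Your proof is correct, and it is the standard argument behind the cited result (spherical means, the recursion $L\phi_i=\phi_{i+1}$ with the singular solution excluded by smoothness at the center, a Vandermonde inversion over several radii, and elliptic bootstrapping); the paper itself only cites \cite{Pizzetti, LM} without proof. The numerical check $c_1=\tfrac{1}{12}$ for $n=4$, $m=2$ agrees with the constant used in Section \ref{section:proof}.
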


\subsection{A review of Bessel functions}

Here we summarize some properties of the Bessel functions, mostly taken from \cite{Abramowitz}.

\begin{lem}\label{Bessel_cor}
 Any solution for the Dirichlet problem
  \begin{equation}\label{Bessel_pr}\left\{\begin{split}
&\partial_{zz}\varphi+\tfrac{1-2\gamma}{z}\partial_z\varphi-\varphi=0,\\
&\varphi(+\infty)=0,\\
&\varphi(0)=1,\\
\end{split}\right.
\end{equation}
can be written as
\begin{equation*}\label{sol_bessel_dir}
\varphi(z)=\frac{\Gamma^{-1}{(\gamma)}}{2^{\gamma-1}} z^{\gamma}K_{\gamma}(z).
\end{equation*}
\end{lem}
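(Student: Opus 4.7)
The plan is to reduce the ODE in \eqref{Bessel_pr} to the standard modified Bessel equation by a power substitution, and then to select the unique linear combination of Bessel functions that satisfies both boundary conditions.

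First, I would set $\varphi(z)=z^{\gamma}\psi(z)$. Computing
$$\varphi'=\gamma z^{\gamma-1}\psi+z^{\gamma}\psi',\qquad \varphi''=\gamma(\gamma-1)z^{\gamma-2}\psi+2\gamma z^{\gamma-1}\psi'+z^{\gamma}\psi'',$$
substituting into the equation, and multiplying through by $z^{2-\gamma}$, the $z^{\gamma-1}\psi'$ and $z^{\gamma-2}\psi$ contributions collapse and one obtains
$$z^{2}\psi''(z)+z\psi'(z)-(z^{2}+\gamma^{2})\psi(z)=0,$$
which is the modified Bessel equation of order $\gamma$. Its general solution is $\psi(z)=c_{1}I_{\gamma}(z)+c_{2}K_{\gamma}(z)$, where $I_{\gamma}$, $K_{\gamma}$ are the modified Bessel functions of the first and second kind, so
$$\varphi(z)=z^{\gamma}\bigl[c_{1}I_{\gamma}(z)+c_{2}K_{\gamma}(z)\bigr].$$

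Next, I would pin down the constants by imposing the two boundary conditions. Using the classical asymptotics $I_{\gamma}(z)\sim(2\pi z)^{-1/2}e^{z}$ and $K_{\gamma}(z)\sim(\pi/(2z))^{1/2}e^{-z}$ as $z\to\infty$ (see \cite{Abramowitz}), the decay condition $\varphi(+\infty)=0$ forces $c_{1}=0$, since the $z^{\gamma}I_{\gamma}(z)$ term blows up exponentially while $z^{\gamma}K_{\gamma}(z)$ decays. Then, using the behaviour $K_{\gamma}(z)\sim\tfrac{1}{2}\Gamma(\gamma)(z/2)^{-\gamma}$ as $z\to 0^{+}$ (valid for $\gamma>0$), one finds $z^{\gamma}K_{\gamma}(z)\to 2^{\gamma-1}\Gamma(\gamma)$, and the normalization $\varphi(0)=1$ forces $c_{2}=[2^{\gamma-1}\Gamma(\gamma)]^{-1}$. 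This yields exactly the formula claimed.

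The only genuine task here is the substitution algebra together with invoking the correct Bessel asymptotics, so there is no real analytic obstacle; the statement is essentially a repackaging of the classical theory of modified Bessel functions. Uniqueness of $\varphi$ is automatic because the ODE is linear of second order with a two-dimensional solution space, and the two endpoint conditions (exponential decay at $+\infty$ and normalization at $0$) each select a one-dimensional subspace with trivial intersection outside the displayed solution.
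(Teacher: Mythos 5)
Your proof is correct and follows essentially the same route as the paper: the substitution $\psi(z)=z^{-\gamma}\varphi(z)$ reducing to the modified Bessel equation, then eliminating $I_\gamma$ via the exponential growth at infinity and normalizing $K_\gamma$ via its small-$z$ asymptotics. The constant $c_2=[2^{\gamma-1}\Gamma(\gamma)]^{-1}$ matches the paper's.
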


\begin{proof}
First, rewrite equation \eqref{Bessel_pr} in terms of $\psi(z)=z^{-\gamma}\varphi(z)$  to get $$z^2\partial_{zz}\psi +z\partial_z\psi -(z^2+\gamma^2)\psi=0\text{ in }\R^{n+1}_+.$$
This is a Bessel equation, thus $\psi$ can be written as a linear combination
$$\psi(z)=c_1I_{\gamma}+c_2K_{\gamma},$$
where $I_\gamma$, $K_\gamma$ are the modified Bessel functions of second kind. These have
the following asymptotic behavior:
\begin{equation*}\begin{split}
I_\gamma(z)&\sim \frac{1}{\Gamma(\gamma+1)2^{\gamma}}\,z^\gamma\lp 1+O(z^2)\rp,\\
K_\gamma(z)&\sim \frac{\Gamma(\gamma)}{2^{1-\gamma}}\,z^{-\gamma}
\lp 1+O(z^2)\rp
+\frac{\Gamma(-\gamma)}{2^{\gamma+1}} \,z^\gamma\lp 1+O(z^2)\rp,
\end{split}
\end{equation*}
for $z\to 0^+$. And when $z\rightarrow +\infty$,
\begin{equation*}\begin{split}\label{asymptotic2}I_\gamma(z)\sim \frac{1}{\sqrt{2\pi z}}\,e^z\lp 1+o(1) \rp,\\
K_\gamma(z)\sim \sqrt{\frac{\pi}{2z}}\,e^{-z}\lp1+o(1) \rp.\end{split}\end{equation*}
Finally, the first Dirichlet condition in \eqref{Bessel_pr} implies $c_1=0$, while the second one fixes the value of $c_2=\frac{\Gamma^{-1}{(\gamma)}}{2^{\gamma-1}}$.
\end{proof}

Some useful properties of the modified Bessel functions are
\begin{prop}
\begin{align}\label{bessel_prop1}
 &K_{\gamma}(az)=\frac{\Gamma(\gamma+\frac{1}{2})(2z)^{\gamma}}{\sqrt{\pi}a^{\gamma}}\int_0^{\infty}\frac{\cos (at)}{(t^2+z^2)^{\frac{1}{2}+\gamma}}\,dt,\quad \quad \text{for all }a \text{ with }\ Re(a)>-\tfrac{1}{2}.\\
    \label{bessel_2_prop1}
&J_{\gamma}(z)=\tfrac{z^{\gamma}}{2^{\gamma}\sqrt{\pi}\Gamma(\gamma+\frac{1}{2})}\int_0^{\pi}e^{iz\cos (\theta)}\sin^{2\gamma}(\theta) \,d\theta.\\
\label{bessel_3_prop1}
&\int_0^{\infty}r^{\mu+\nu+1}K_\mu(a r)J_\nu (b r)\,dr=\frac{(2a)^\mu(2b)^\nu\Gamma(\mu+\nu+1)}{(a^2+b^2)^{\mu+\nu+1}}.
\end{align}

\end{prop}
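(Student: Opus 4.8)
All three identities are classical and may be found in \cite{Abramowitz} (and in Watson's treatise on Bessel functions); for completeness we indicate the mechanism of proof. For \eqref{bessel_2_prop1}, which is the Poisson integral representation of $J_\gamma$, the plan is to establish it directly: expand $e^{iz\cos\theta}$ in its Taylor series and integrate term by term against $\sin^{2\gamma}\theta$ on $(0,\pi)$. The odd powers of $\cos\theta$ drop out by the symmetry $\theta\mapsto\pi-\theta$, while the even powers produce Beta integrals $\int_0^\pi\cos^{2k}\theta\,\sin^{2\gamma}\theta\,d\theta=\frac{\Gamma(k+\frac12)\Gamma(\gamma+\frac12)}{\Gamma(k+\gamma+1)}$; simplifying with the Legendre duplication formula turns the resulting series, together with the prefactor, into the defining ascending series $\sum_{k\ge0}\frac{(-1)^k}{k!\,\Gamma(k+\gamma+1)}(z/2)^{2k+\gamma}$ of $J_\gamma(z)$.

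For \eqref{bessel_prop1} I would write $(t^2+z^2)^{-\gamma-\frac12}=\frac{1}{\Gamma(\gamma+\frac12)}\int_0^\infty s^{\gamma-\frac12}e^{-s(t^2+z^2)}\,ds$, insert this into the integral, interchange the order of integration (legitimate by Fubini in the relevant parameter range), and carry out the Gaussian integral $\int_0^\infty\cos(at)\,e^{-st^2}\,dt=\tfrac12\sqrt{\pi/s}\,e^{-a^2/(4s)}$. What remains is $\frac{\sqrt\pi}{2\Gamma(\gamma+\frac12)}\int_0^\infty s^{\gamma-1}e^{-sz^2-a^2/(4s)}\,ds$, and this last integral is the standard representation $\int_0^\infty s^{\gamma-1}e^{-\alpha s-\beta/s}\,ds=2(\beta/\alpha)^{\gamma/2}K_\gamma(2\sqrt{\alpha\beta})$ with $\alpha=z^2$, $\beta=a^2/4$, which gives $2(a/2z)^\gamma K_\gamma(az)$; rearranging yields \eqref{bessel_prop1}.

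For \eqref{bessel_3_prop1} the plan is to substitute the subordination representation $K_\mu(ar)=\tfrac12(ar/2)^\mu\int_0^\infty t^{-\mu-1}e^{-t-a^2r^2/(4t)}\,dt$, interchange the $r$- and $t$-integrations, and evaluate the inner integral $\int_0^\infty r^{2\mu+\nu+1}e^{-pr^2}J_\nu(br)\,dr$ with $p=a^2/(4t)$ by Weber's exponential integral. The remaining $t$-integral has the form $\int_0^\infty t^{\nu}e^{-t}\,{}_1F_1(\mu+\nu+1;\nu+1;-b^2t/a^2)\,dt$, and by the Laplace-transform identity $\int_0^\infty t^{\nu}e^{-t}\,{}_1F_1(\alpha;\nu+1;-\lambda t)\,dt=\Gamma(\nu+1)(1+\lambda)^{-\alpha}$ (essentially the binomial series, since ${}_2F_1(\alpha,\nu+1;\nu+1;x)=(1-x)^{-\alpha}$) it equals $\Gamma(\nu+1)(1+b^2/a^2)^{-(\mu+\nu+1)}$; collecting the powers of $a$, $b$ and $2$ reproduces the right-hand side of \eqref{bessel_3_prop1}. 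A variant that avoids confluent hypergeometric functions is to prove the formula first for $\mu\in\mathbb N$ by differentiating the clean Weber integral $\int_0^\infty r^{\nu+1}e^{-pr^2}J_\nu(br)\,dr=\frac{b^\nu}{(2p)^{\nu+1}}e^{-b^2/(4p)}$ repeatedly in $p$, and then to extend to arbitrary $\mu$ by analyticity of both sides.

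The only genuine difficulty is bookkeeping: each step generates $\Gamma$-factors and powers of $2$ that must be matched carefully, and in \eqref{bessel_prop1} and \eqref{bessel_3_prop1} one must verify that the interchanges of integration are justified, which is precisely where the real-part restrictions on the parameters are used. Once the scheme above is set up, everything is routine.
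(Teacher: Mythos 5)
Your sketches are correct, but they take a different route from the paper: the paper offers no proof at all for this appendix proposition, simply quoting the three identities from the Abramowitz--Stegun handbook (they are, respectively, Basset's integral for $K_\gamma$, Poisson's integral for $J_\gamma$, and a classical Hankel-transform formula of Weber--Schafheitlin type found in Watson or Gradshteyn--Ryzhik), whereas you supply self-contained derivations. Your mechanisms are the standard ones and the bookkeeping does close: in \eqref{bessel_2_prop1} the Beta integrals combined with the duplication formula do reproduce the ascending series of $J_\gamma$; in \eqref{bessel_prop1} the Gamma-subordination of $(t^2+z^2)^{-\gamma-\frac12}$, the Gaussian cosine integral and the representation $\int_0^\infty s^{\gamma-1}e^{-\alpha s-\beta/s}\,ds=2(\beta/\alpha)^{\gamma/2}K_\gamma(2\sqrt{\alpha\beta})$ give exactly the stated constant; and in \eqref{bessel_3_prop1} the subordination of $K_\mu$, Weber's exponential integral and the Laplace transform of ${}_1F_1$ yield $(2a)^\mu(2b)^\nu\Gamma(\mu+\nu+1)(a^2+b^2)^{-(\mu+\nu+1)}$ on the nose. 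What the citation buys the paper is brevity and freedom from checking Fubini and parameter ranges (note the paper's stated condition $\mathrm{Re}(a)>-\tfrac12$ in \eqref{bessel_prop1} is anyway a misprint for $\mathrm{Re}(\gamma)>-\tfrac12$); what your derivation buys is transparency about exactly which parameter restrictions are needed. One small caution on your ``variant'' for \eqref{bessel_3_prop1}: agreement of the two sides for $\mu\in\N$ alone does not give the general case by naive analytic continuation --- you would need a Carlson-type growth argument or, more simply, analyticity in $\mu$ on an open set where both sides are defined --- so the ${}_1F_1$ route you present first is the one to keep.
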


\bigskip
\noindent\textbf{Acknowledgements.}
 A. DelaTorre,  A. Hyder and L. Martinazzi  have been supported by the Swiss National Science Foundation projects n. PP00P2-144669, PP00P2-170588/1 and P2BSP2-172064. The first author is also partially supported by Spanish government grants MTM2014-52402-C3-1-P and MTM2017-85757-P.
M.d.M. Gonz\'alez is supported by Spanish government grants MTM2014-52402-C3-1-P and MTM2017-85757-P, and the Fundaci\'on BBVA grant for  Researchers and Cultural Creators, 2016.

\end{document}